\def\R{\mathbb{R}}
\def\C{\mathbb{C}}
\def\N{\mathbb{N}}
\def\cA{\mathcal{A}}
\def\cE{\mathcal{E}}
\def\cF{\mathcal{F}}
\def\cI{\mathcal{I}}
\def\cM{\mathcal{M}}
\def\cN{\mathcal{N}}
\def\cO{\mathcal{O}}
\def\cS{\mathcal{S}}
\def\cT{\mathcal{T}}
\def\a{\alpha}
\def\b{\beta}
\def\g{\gamma}
\def\d{\delta}
\def\l{\lambda}
\def\p{\partial}
\def\o{\omega}
\def\veps{\varepsilon}
\def\vrho{\varrho}
\def\O{\Omega}
\def\G{\Gamma}
\def\GD{{\Gamma_{\rm D}}}
\def\pO{{\p\O}}
\def\DD{{\rm D}}
\def\wto{\rightharpoonup}
\def\Id{I}
\def\transp{{\sf T}}
\def\hx{\widehat{x}}
\def\hy{\widehat{y}}
\def\ty{\widetilde{y}}
\def\hf{\widehat{f}}
\def\tu{\widetilde{u}}
\newcommand{\dv}[1]{\,{\mathrm d}#1}
\newcommand{\wcheck}[1]{#1\hspace{-.8ex}\mbox{\huge {\lower.45ex \hbox{$\textstyle \check{}$}}} \hspace{.5ex}}
\DeclareMathOperator{\dist}{dist}
\DeclareMathOperator{\trace}{tr}
\DeclareMathOperator{\hull}{span}
\let\oldmarginpar\marginpar
\renewcommand\marginpar[1]{
  \oldmarginpar[\raggedleft\footnotesize #1]
  {\raggedright\footnotesize #1}}
\newtheorem{definition}{Definition}
\newtheorem{proposition}[definition]{Proposition}
\newtheorem{theorem}[definition]{Theorem}
\newtheorem{remark}[definition]{Remark}
\newtheorem{remarks}[definition]{Remarks}
\newtheorem{example}[definition]{Example}
\newtheorem{algorithm}[definition]{Algorithm}
\numberwithin{definition}{section}
\def\fulld{{\rm 3d}}
\def\rod{{\rm rod}}
\def\plate{{\rm plate}}
\def\tb{\widetilde{b}}
\def\hcI{\widehat{\cI}}
\def\gD{{\g_\DD}}
\def\hA{\widehat{A}}
\def\hM{\widehat{M}}
\DeclareMathOperator{\sym}{sym}
\def\dkt{{\rm dkt}}
\def\tor{{\rm tor}}
\def\bc{{\rm bc}}
\def\stop{{\rm stop}}
\def\iso{{\rm iso}}
\def\bil{{\rm bil}}
\def\tot{{\rm tot}}
\def\TP{{\rm TP}}
\def\fvk{{\rm fvk}}
\def\cb{c_{\rm b}}
\def\ct{c_{\rm t}}
\def\cspont{c_{\rm sc}}
\def\tc{\widetilde{c}}
\def\ic{{\rm ic}}
\def\hP{\widehat{P}}
\def\hm{{\rm hm}}
\def\hh{h_{\rm min}}
\def\fv{{\rm fvk}}
\def\tveps{\widetilde{\veps}}
\def\fvk{{F}\"oppl--von {K}\'arm\'an}
\def\verti{\star}
\def\hor{\dagger}
\def\Hom{H}
\begin{document}
\title[Approximation of nonlinear bending phenomena]{Finite element
simulation of nonlinear bending models for thin elastic rods and plates}
\author[S. Bartels]{S\"oren~Bartels}
 \address{Abteilung f\"ur Angewandte Mathematik,  
 Albert-Ludwigs-Universit\"at Freiburg, Hermann-Herder-Str.~10, 
 79104 Freiburg i.~Br., Germany}
 \email{bartels@mathematik.uni-freiburg.de}
\date{\today}
\renewcommand{\subjclassname}{%
\textup{2010} Mathematics Subject Classification}
\subjclass[2010]{65N12 65N15 65N30}
\begin{abstract}
Nonlinear bending phenomena of thin elastic structures arise in various 
modern and classical applications. Characterizing low energy states of
elastic rods has been investigated by Bernoulli in 1738 and related 
models are used to determine configurations of DNA strands. 
The bending of a piece of paper has 
been described mathematically by Kirchhoff in 1850 and extensions 
of his model arise in nanotechnological applications such as the 
development of externally operated microtools. A rigorous mathematical 
framework that identifies 
these models as dimensionally reduced limits from three-dimensional hyperelasticity 
has only recently been established. It provides a solid basis for developing and 
analyzing numerical approximation schemes. The fourth order character of 
bending problems and a pointwise isometry constraint for large deformations 
require appropriate discretization techniques which are discussed in this article. 
Methods developed for 
the approximation of harmonic maps are adapted to discretize the isometry constraint and 
gradient flows are used to decrease the bending energy. For the case of elastic rods, 
torsion effects and a self-avoidance potential that guarantees
injectivity of deformations are incorporated. The devised and rigorously analyzed 
numerical methods are 
illustrated by means of experiments related to the relaxation of elastic knots, the 
formation of singularities in a M\"obius strip, and the simulation of 
actuated bilayer plates.  
\end{abstract}
 
\keywords{nonlinear bending, elasticity, finite element methods, convergence, 
iterative solution}

\maketitle
 
\setcounter{tocdepth}{1}
\tableofcontents

\section{Introduction}\label{sec:intro}
Thin elastic structures occur in various practical applications and in fact
truly three-dimensional objects are hardly ever used. Important reasons
for this are the reduction of weight and cost but also the special mechanical 
features of rods and plates. Correspondingly, their numerical treatment is expected
to be more efficient when such structures can be described as lower-dimensional 
objects. Because of the different mechanical behavior they cannot be treated like
three-dimensional objects and new discretization
techniques are needed. Typical large bending deformations of rods and plates are
distinct from those of three-dimensional objects and are illustrated
in Figure~\ref{fig:sketches}.

\begin{figure}
\input{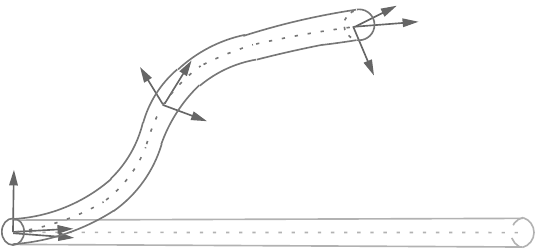_t} \hspace*{3mm}
\input{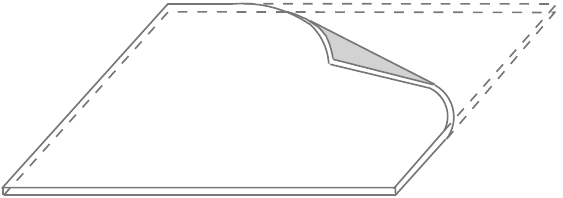_t}
\caption{\label{fig:sketches} The mathematical description of large bending
deformations of thin objects requires the use of appropriate geometric 
quantities: deformed rod with circular cross-section together with an orthonormal
frame that allows to measure bending and torsion effets (left); deformation
of a flat plate that preserves angle and length relations (right). }
\end{figure}

In this article we address the numerical approximation of dimensionally 
reduced models for describing large deformations of thin elastic rods and plates. 
These models result from rigorous limiting processes of general 
three-dimensional hyperelastic material descriptions when the diameter of a 
circular rod or the thickness of a plate is small compared to the
length or diameter and when the acting forces lead to deformations with
energies comparable to the third power of the diameter or thickness. Examples
of such situations are the bending of a springy wire or sheet of paper. 

Characteristic for large nonlinear bending phenomena is that nearly no shearing 
or stretching effects of the object occur and that curvature quantities 
define the amount of energy required for particular deformations. These
aspects become explicitly apparant in the dimensionally reduced models: 
the energy functionals depend on curvature quantities and an isometry
condition arises in the vanishing thickness or diameter limit. In particular,
this condition implies that length and angle relations remain unchanged by
a deformation. 

The employed models for elastic rods and plates result from dimension
reductions of general descriptions for hyperelastic material behavior.
We thus consider an energy density $W:\R^{3\times 3} \to \R$ 
and a corresponding energy minimization of
\[
I_\fulld^\d[y] = \int_{\O_\d} W(\nabla y) \dv{x}
\]
in a set $\cA \subset W^{1,p}(\O_\d;\R^3)$ of admissible deformations 
$y:\O_\d \to \R^3$ that includes boundary conditions. The parameter 
$\d>0$ indicates a small diameter or thickness of 
the reference configuration $\O_\d\subset \R^3$, e.g., $\O_\d = (0,L)\times \d S$
for a thin rod with cross-section $\d S\subset\R^2$ containing zero 
or $\O_\d = \o \times (-\d/2,\d/2)$ for a thin plate with midplane $\o\subset \R^2$.  
Assuming that the minimal energies are comparable to $\d^3$, i.e.,
\[
\min_{y \in \cA} I_\fulld^\d[y]  = \cO(\d^3)
\]
as $\d\to 0$, and following the contributions~\cite{FrJaMu02a,FrJaMu02b,Pant03,MorMul03},
it is possible to identify limiting, dimensionally reduced
theories that determine the corresponding limits of solutions as $\d \to 0$.
The particular cubic scaling characterizes bending phenomena of 
the elastic body and excludes membrane effects, we refer the reader 
to~\cite{FrJaMu06,ConMag08,FHMP16} for discussions of models corresponding to other
scaling regimes. We outline the 
numerical methods that have been developed for simulating nonlinear 
bending behavior of rods and plates after a 
discussion of the related literature. 

Throughout this article we use energy minimization 
principles to determine
deformations subject to boundary conditions and external forces. For other 
approaches to the modeling of rods and plates via equilibria of
forces or conservation of momentum we refer the reader 
to~\cite{Antm05-book,Ciar97-book,AudPom10-book}.
Only a few numerical methods have been discussed mathematically for the 
numerical solution of nonlinear bending models with inextensibility or 
isometry constraint. The articles~\cite{WBHZG07,Bergetal08} devise various 
methods to compute discrete curvature quantities. 
The focus of this article is on the reliability of methods, i.e., the 
accuracy of finite element discretizations and the convergence of
iterative solution methods for the discrete problems. 
The methods discussed here
use techniques developed for the approximation of harmonic 
maps into surfaces in the articles~\cite{Alou97,Bart05,Bart16}. 
We review the numerical treatment of rods following~\cite{Bart13c,BarRei19-in-prep-pre}
and plates as proposed in~\cite{Bart13b,Bart15-book}. For 
the efficient iterative solution we adopt ideas 
from~\cite{NasSof96,KPPRS18-pre}. We discuss the treatment of bilayer
plates following~\cite{BaBoNo17,BBMN18}, illustrate a method
that enforces injectivity of deformations in the case of rods
following~\cite{BaReRi18,BarRei18-pre}, and propose methods for 
the numerical solution of bending deformations with shearing effects
following ideas from~\cite{Bart17}. The problems considered in this
article have similarities with problems related to the length-preserving
elastic flow of curves and the surface area and volume preserving
Willmore--Helfrich flow of closed surfaces but require different numerical methods. 
For contributions related to those problems we refer the reader 
to~\cite{DzKuSc02,DeDzEl05,BaGaNu07,BaGaNu12,SaNeBi16,PozSti17,BoNoNt-in-prep-pre}; 
for examples of modern applications 
of nonlinear bending phenomena including the construction of micromachining
fingers, the fabrication of nanotubes, the occurence of wrinkling in
plastic sheets, and the description of certain properties of DNA molecules, 
we refer the reader to~\cite{Smela-etal93,SchEbe01,ShRoSw07,ThCoSw04}.

\subsection{Bending of elastic rods}
We consider an
elastic rod, e.g., a springy wire, which in its reference configuration 
occupies the region $(0,L)\times \{0\}\subset \R^3$. A low energy deformation 
\[
y: (0,L) \to \R^3
\]
leaves distances of pairs of points on the rod unchanged. This is
described by the inextensibility (and incompressibility) condition
\[
|y'(x_1)|= 1
\]
for almost every $x_1\in (0,L)$. For appropriate boundary conditions a
deformation then minimizes the bending energy
\[
I_\rod[y] = \frac12 \int_0^L |y''(x_1)|^2 \dv{x_1}.
\]
The inextensibility condition implies that $y$ defines an 
arclength parametrization of the deformed rod and hence its curvature
is given by the second derivative of $y$. The simple energy functional $I_\rod$,
which has been proposed by Bernoulli in 1738, 
ignores torsion effects and arises as a special case of the dimension
reduction from three-dimensional hyperelasticity. 

It is interesting to see that the dimension reduction leads to significant
changes in the nature of the energy functionals. The three-dimensional
model depends on strains, is not constrained, and often provides existence of
unique solutions. The dimensionally reduced functional $I_\rod$ depends on
curvature, is constrained, and is singular
in the sense that the set of admissible deformations may be empty, e.g., for
extensive boundary conditions, and that solutions may be non-unique, e.g., for
simple compressive boundary conditions. 
These aspects are related to the presence of a critical nonlinearity 
via a Lagrange multiplier for the
inextensibility constraint in 
the Euler--Lagrange equations for critical points of $I_\rod$, i.e.,
\[
(y'',w'') = (\lambda y',w') \quad 
\Longleftrightarrow \quad y^{(4)} = (\lambda y')',
\]
where the scalar function $\lambda$ depends nonlinearly on $y$. 
The explicit presence of a 
Lagrange multiplier can be avoided if only test functions are considered
that satisfy the linearized inextensibility condition $y'\cdot w' = 0$. This corresponds
to normal, i.e., non-tangential perturbations of a curve in the energy minimization. 

The inextensibility condition requires a suitable numerical treatment to 
avoid locking phenomena or other artifacts. For a partitioning of the interval
$(0,L)$ with nodes 
\[
0 = z_0 < z_1 < \dots < z_N = L
\]
and a subordinated conforming finite element space $\cA_h\subset H^2(0,L;\R^3)$
we impose the inextensibility condition only at these nodes, i.e.,
\[
|y_h'(z_i)| = 1
\]
for $i=0,1,\dots,N$. Since $y_h\in H^2(0,L;\R^3)$ we obtain
linear convergence with respect to the 
meshsize $h$ of the constraint violation error away from the nodes. 
The discrete minimization problem then seeks a minimizer
$y_h \in \cA_h$ for the functional
\[
y_h \mapsto I_\rod^{h} [y_h] = \frac12 \int_0^L |y_h''(x_1)|^2 \dv{x_1},
\]
subject to the nodal constraints $|y_h'(z_i)|=1$ for $i=0,1,\dots,N$. 
A possible choice of a finite element space uses piecewise cubic, 
continuously differentiable functions. This space has the advantage that
its degrees of freedom are the positions and tangent vectors at the nodes,
i.e., 
\[
y_h \equiv \big(y_h(z_i),y_h'(z_i)\big)_{i=0,\dots,N}.
\]
The discretized inextensibility condition can thus be explicitly imposed
on certain degrees of freedom, cf.~Figure~\ref{fig:pw_cubics}.

\begin{figure}
\input{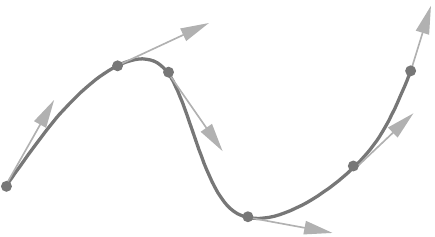_t}
\caption{\label{fig:pw_cubics} Continuously differentiable, piecewise
cubic curves are defined by positions and tangent vectors at nodes 
$z_0<z_1<\dots<z_N$.}
\end{figure}

We iteratively solve the discrete minimization problem by using a 
gradient flow, i.e., on the continuous level we consider a family
$y:[0,T]\times (0,L)\to \R^3$ of deformations that solve the evolution
equation
\[
(\p_t y,w)_\star = - (y'',w'')_{L^2}, \quad y(0) = y_0,
\]
subject to the linearized inextensibility conditions 
\[
\p_t y'(t,x_1) \cdot y'(t,x_1) = 0, \quad w'(x_1) \cdot y'(t,x_1) = 0.
\]
Provided that we have $|y_0'(x_1)|^2=1$ it follows that
\[
|y'(s,x_1)|^2  - 1 = \int_0^s \frac{d}{dt} |y'(t,x_1)|^2 \dv{t}
= 2 \int_0^s \p_t y'(t,x_1) \cdot y'(t,x_1) \dv{t} = 0,
\]
i.e., the inextensibility condition is satisfied. 
We use an implicit discretization of the evolution equation
and a semi-implicit treatment of the linearized constraint, i.e.,
with the backward difference quotient operator $d_t$ we consider 
the time-stepping scheme 
\[
(d_t y_h^k, w_h)_\star = - ([y_h^k]'',[w_h]'')_{L^2}, \quad y_h^0 = y_{0,h},
\]
subject to the linearized constraints evaluated at the nodes, i.e., 
\[
[d_t y_h^k]'(x_i) \cdot [y_h^{k-1}]'(x_i) = 0, 
\quad [w_h]'(x_i) \cdot [y_h^{k-1}]'(x_i) = 0
\]
for $i=0,1,\dots,N$. The scheme is unconditionally energy-decreasing and
convergent to a stationary configuration,  
i.e., choosing the admissible test function $w_h = d_t y_h^k$ directly shows
\[
I_\rod^h[y_h^k] + \tau \|d_t y_h^k\|_\star^2  \le I_\rod^h[y_h^{k-1}].
\]
The inextensibility constraint will not be satisfied exactly at the nodes
but its violation is controlled by the step size $\tau$ and the initial
energy. A proof in the discrete setting imitates the continuous argument given
above. Using the orthogonality $[d_t y_h^k]'(z_i) \cdot [y_h^{k-1}]'(z_i) = 0$ 
and the property $|[y_h^0]'(z_i)|^2 = 1$ we have  
\[\begin{split}
|[y_h^k]'(z_i)|^2 -1  & = |[y_h^{k-1}](z_i)'|^2 + \tau^2 |[d_t y_h^k]'(z_i)|^2 -1 \\
& =  \dots   
= \tau^2 \sum_{\ell=1}^k |[d_t y_h^\ell]'(z_i)|^2.
\end{split}\]
Because of the unconditional energy stability the term on the right-hand side is
of order $\cO(\tau)$. We will show below that these properties are also valid
if torsion effects are taken into account. 

\subsection{Elastic plates}
The mathematical description and numerical treatment of elastic plates 
generalizes that of elastic rods. In the dimensionally reduced model we consider
deformations of a two-dimensional midplane 
\[
y:\o \to \R^3
\]
that leave angle and area relations unchanged, i.e., they satisfy the isometry
condition
\[
(\nabla y\big)^\transp \nabla y = I_2
\]
almost everywhere in $\o \subset \R^2$ with the identity matrix $I_2\in \R^{2\times 2}$. 
This is equivalent to saying
that the tangent vectors $\p_1 y$ and $\p_2 y$ of the deformed plate and
the normal vector $b = \p_1 y \times \p_2 y$ define an orthonormal basis for $\R^3$
in almost every point $x'\in \o$. The actual deformation for appropriate boundary conditions
minimizes the bending energy proposed by Kirchhoff in 1850,
\[
I_\plate[y] = \frac12 \int_\o |D^2 y|^2 \dv{x'}.
\]
Because of the isometry condition, the integrand coincides with the mean curvature
of the deformed plate while its Gaussian curvature vanishes. For a minimizing
or critical isometry $y$ we have that 
\[
(D^2 y, D^2 w) = 0
\]
for all test fields $w$ satisfying appropriate homogeneous boundary conditions
and the linearized isometry condition
\[
(\nabla y)^\transp \nabla w + (\nabla w)^\transp \nabla y = 0.
\]
A finite element discretization uses a possibly nonconforming finite element
space such as so-called discrete Kirchhoff triangles and imposes the isometry
condition in the set of nodes $\cN_h$, i.e., for all $z\in \cN_h$ we have 
\[
\big(\nabla y_h(z)\big)^\transp \nabla y_h(z) = I_2.
\]
With a discrete Hessian $D_h^2$ the numerical minimization is then realized
for the functional 
\[
I_\plate^h [y_h] = \frac12 \int_\o |D_h^2 y_h|^2 \dv{x'}.
\]
In the case of the discrete Kirchhoff triangle, which may be seen as a 
natural generalization of the space of one-dimensional cubic $C^1$ functions, 
the degrees of freedom
are the deformations and the deformation gradients in the nodes, i.e., the
quantities 
\[
\big(y_h(z),\nabla y_h(z)\big)_{z\in \cN_h}.
\]
An image of a discrete Kirchhoff deformation is depicted in 
Figure~\ref{fig:dkt_surf_sketch}.

\begin{figure}
\input{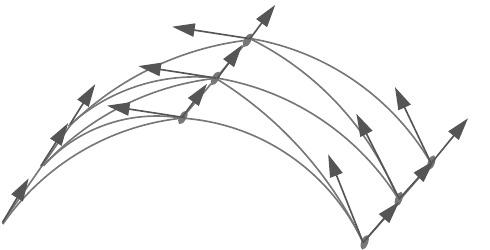_t}
\caption{\label{fig:dkt_surf_sketch} Discrete deformations defined by
discrete Kirchhoff triangles are defined by positions of nodes and 
tangent vectors at the displaced nodes.}
\end{figure}

The isometry constraint is thus imposed directly on certain degrees of freedom.
The iterative numerical minimization follows closely the approach used
in the one-dimensional situation. For an initial isometry $y_0$, we consider
the continuous evolution problem
\[
(\p_t y,w)_\star = -(D^2y,D^2 w), \quad y(0)= y_0,
\]
for appropriate test functions $w\in H^2(\o;\R^3)$ subject to the 
linearized isometry condition
\[
L_{\nabla y}^\iso[\p_t \nabla y] = 0, \quad L_{\nabla y}^\iso[\nabla w] = 0,
\]
with the linearized isometry operator 
\[
L_A^\iso[B] = A^\transp B + B^\transp A. 
\]
A semi-implicit discretization of this constrained evolution problem
leads to a sequence of linearly constrained problems: given an admissible
$y_h^0 \in \cA_h$ compute the sequence $(y_h^k)_{k=0,1,\dots}$ via
$y_h^k = y_h^{k-1}+\tau d_t y_h^k$, where $d_t y_h^k$ solves 
\[
(d_t y_h^k,w_h)_\star = -(D_h^2 y_h^k,D_h^2 w_h) 
\]
subject to the conditions 
\[
L_{\nabla y_h^{k-1}}^\iso[d_t \nabla y_h^k] = 0, \quad
L_{\nabla y_h^{k-1}}^\iso[\nabla w_h] = 0.
\]
Again, straightforward calculations show that the iteration is energy
decreasing and convergent, and that the constraint violation is 
of order $\cO(\tau)$. 

\subsection{Outline of the article}
The article is organized as follows. In Section~\ref{sec:dim_red} we discuss
the arguments that lead to dimensionally reduced models for elastic rods and plates
in the case of small energies. We review partial $\Gamma$-convergence results
and explain the occurrence of the inextensibility and isometry constraints. 
Section~\ref{sec:fe_discr} is devoted to the convergent and practical finite 
element discretization of the one- and two-dimensional minimization problems 
describing the elastic deformation of rods and plates. The rigorous justification
of the finite element methods will be established via showing $\Gamma$-convergence
of the discretized functionals to the continuous one as discretization parameters
tend to zero. Difficulties arise in the appropriate treatment of nonlinear 
constraints and higher order derivatives. The practical minimization of the
discretized energy functionals is addressed in Section~\ref{sec:grad_flow}. 
We use appropriately discretized gradient flows that lead to sequences of linear 
systems of equations together with guaranteed energy decrease. Moreover, we verify
that they converge to stationary configurations. In view of the nonuniqueness
and limited additional regularity properties 
of solutions this appears to be the best attainable result if no further assumptions
are made. The special 
saddle-point structure of the linear systems of equations that arise in the
time steps is investigated in Section~\ref{sec:saddle_solve}. It turns out that
the nodal constraints can be incorporated in the solution space
leading to reduced linear systems
with symmetric and positive definite system matrices.  
Section~\ref{sec:extensions} is concerned with extensions and modifications of
the models and solution methods. In particular, we discuss the 
numerical treatment of bilayer bending problems, the inclusion
of a self-avoidance potential, and a bending problem allowing for the 
formation of wrinkles. The final section provides a summary and conclusions of our 
considerations. We end the introduction with an overview of employed notation. 

\subsection{Notation}
Throughout this article we use standard notation for derivatives and integrals,
matrices and inner products, Lebesgue, Sobolev, and finite element
spaces. The list in Table~\ref{tab:notation} provides an overview of
the most important symbols.

\begin{table}[p]
{\small
\begin{tabular}{|p{33mm}|p{86mm}|} \hline
 $(0,L)$, $\o$, $\O$   & one-, two-, and three-dimensional domains \\[1mm]
 $L^p(A;\R^\ell)$ & Lebesgue functions with values in $\R^\ell$  \\[1mm]
 $W^{k,p}(A,\R^\ell)$ & Sobolev functions with values in $\R^\ell$ \\[1mm]
 $H^k(A,\R^\ell)$ & Sobolev space with $p=2$ \\[1mm]
 $x=(x_1,x_2,x_3)$ & spatial variable \\[1mm]
 $x'=(x_1,x_2)$ & planar component of spatial variable \\[1mm]
 $|\cdot|$ & Euclidean or Frobenius norm of a vector or matrix \\[1mm]
 $(\cdot,\cdot)$, $\|\cdot\|$ & scalar product and norm in $L^2$ \\[1mm]
 $x\cdot y$, $A:B$ & scalar products of vectors and matrices \\[1mm]
 $I_\ell$ & identity matrix in $\R^{\ell\times \ell}$ \\[1mm]
 $\sym(A)$, $\trace(A)$ & symmetric part and trace of a matrix \\[1mm]
 $SO(3)$ & orthogonal matrices with positive determinant \\[1mm]
 $y'$ & one-dimensional derivative \\[1mm]
 $\nabla y = [\p_1 y,\p_2 y,\p_3 y]$ & gradient of a vector field \\[1mm]
 $\nabla' y$ & planar component of gradient \\[1mm]
 $G'$, $I'$  & total or Fr\'echet derivative \\[1mm]
 $P_k(A)$ & polynomials of degree at most $k$ on a set $A$ \\[1mm]
 $h$, $\hh$  & maximal and minimal mesh-sizes \\[1mm]
 $\cT_h$  & triangulation with intervals or triangles \\[1mm]
 $\cN_h$, $\cS_h$ & nodes and sides in a triangulation \\[1mm]
 $z$, $z_T$, $z_S$ & vertices and midpoints of elements, midpoints of sides \\[1mm] 
 $\cS^{k,\ell}(\cT_h)$ & elementwise degree $k$ polynomials in $C^\ell$ \\[1mm]
 $\cI_h^{1,0}$, $\hcI_h^{1,0}$ & global and elementwise nodal $P1$ interpolants \\[1mm]
 $Q_h$ & elementwise averaging operator \\[1mm]
 $\|\cdot\|_{L^p_h}$, $\|\cdot\|_h$, $(\cdot,\cdot)_h$ & discrete $L^p$ norms, case $p=2$, discrete $L^2$ product \\[1mm]
 $\tau$ & step size \\[1mm]
 $d_ta^k = (a^k-a^{k-1})/\tau$ & backward difference quotient for step size $\tau>0$ \\[1mm]
 $\d$ & small thickness parameter \\[1mm]
 $I[y]$ & energy functional \\[1mm]
 $L[y]$ & linear operator \\[1mm]
 $W$ & energy density \\[1mm]
 $Q_3$, $Q_\rod$, $Q_\plate$ & quadratic forms \\[1mm]
 $\l$, $\mu$ & Lam\'e parameters \\[1mm]
 $\cb$, $\ct$ & bending and torsion rigidity \\[1mm]
 $\cA$, $\cA_h$ & sets of admissible deformations \\[1mm]
 $\cF[y]$, $\cF_h[y_h]$ & (shifted) tangent spaces \\
 $(\cdot,\cdot)_\star$, $(\cdot,\cdot)_\dagger$ & metrics used to define gradient flows \\
 $c$, $c'$, $c''$, ... & generic constants \\\hline
\end{tabular} 
}
\caption{\label{tab:notation} Frequently used notation.}
\end{table}

\section{Formal dimension reductions}\label{sec:dim_red}
Following the articles~\cite{FrJaMu02a,FrJaMu02b,MorMul03}
we illustrate in this section how the dimensionally reduced minimization 
problems can be obtained from a general three-dimensional hyperelastic 
energy minimization problem:
\[
\tag{${\rm P}_\fulld$} 
\left\{ \, 
\begin{array}{l}
\text{Minimize} \quad \displaystyle{I_\fulld [y] = \int_\O W(\nabla y) \dv{x}} \\[2mm]
\text{in the set } \cA \subset W^{1,p}(\O;\R^3).
\end{array} \right.
\]
The set of admissible deformations $\cA$ is assumed to be a weakly closed 
subset of a Sobolev
space $W^{1,p}(\O;\R^3)$ and required to include appropriate 
boundary conditions which imply a coercivity property. In an abstract way
these are defined by a bounded linear operator 
\[
L_\bc:W^{1,p}(\O;\R^3)\to Y
\]
and given data
$\ell_\bc \in Y$ for a suitable linear space $Y$, e.g., traces of functions
in $W^{1,p}(\O;\R^3)$ restricted to a subset $\GD$ of $\p\O$. 
We assume that the energy density
$W\in C^2(\R^{3\times 3})$ satisfies the following standard requirements: 
\begin{itemize}
\item $W$ is frame-indifferent, i.e., for all $F\in \R^{3\times 3}$
and $Q\in SO(3)$ we have
\[
W(Q F) = W(F),
\]
\item $W$ vanishes at the identity $I_3 \in \R^{3\times 3}$ 
and grows at least quadratically away from $SO(3)$, i.e., for all $F \in \R^{3\times 3}$
we have 
\[
W(I_3) = 0, \quad W(F) \ge c \, {\rm dist}^2(F,SO(3)),
\]
\item $W$ is isotropic, i.e., for all $F\in \R^{3\times 3}$
and $R\in SO(3)$ we have
\[
W(F R) = W(F).
\]
\end{itemize}

From the first two conditions we have that $W'(I_3) = 0$ and a Taylor expansion yields
\[
W(I_3+G) = \frac12 Q_3(G) + o(|G|^2),
\]
where $Q_3(G) = W''(I_3)[G,G]$ is the quadratic form defined by the second variation
of $W$ at the identity matrix. 
Incorporating the implicitly assumed homogeneity of the underlying material we have 
\[
Q_3(G) =  \C G: G,
\]
where the linear operator $\C:\R^{3\times 3}\to \R^{3\times 3}$ is given by 
\[
\C A = 2\mu \sym(A) + \lambda \trace(A) I_3,
\]
with the Lam\'e parameters $\lambda,\mu>0$, the symmetric part
$\sym(A) = (A+A^\transp)/2$, and the trace $\trace(A) = A:I_3$. 

\subsection{Elastic rods}\label{sec:dim_red_rods}
We assume that the deformation $y:(0,L) \to \R^3$ of an elastic rod of
vanishing thickness and 
length $L$ preserves distances, i.e., satisfies $|y'(x_1)| = 1$, and complement
the vector field $y'$ by normal vector fields $b,d:(0,L)\to \R^3$ 
to an orthonormal frame 
\[
[y',b,d]: (0,L)\to SO(3).
\]
We then consider a three-dimensional deformation $y_\d$ obtained from extending
the deformation of the centerline $(0,L)$ to the three-dimensional body 
$\O_\d = (0,L)\times \d S$ with scaled cross section $S\subset \R^2$ containing
zero, i.e., 
\[
y_\d (x_1,x_2,x_3) = y(x_1) + x_2 b(x_1) + x_3 d (x_1) + \d^2 \b(x),
\]
with a correction function $\b:\O_\d \to \R^3$. Inserting this deformation
into the three-dimensional energy functional and considering the limit as $\d\to 0$,
we expect to identify a dimensionally reduced functional minimized by $y$ and the
normal fields $b$ and $d$. We note that $x_2,x_3 = \cO(\d)$ and that we
expect $\p_2 \b, \p_3 \b = \cO(\d^{-1})$. We therefore set  
\[\begin{split}
\nabla y_\d &= \big[y',b,d\big] 
+ \big[x_2b' + x_3 d',\d^2 \p_2 \b, \d^2 \p_3 \b \big]+ \d^2 \big[\p_1 \b,0,0\big] \\
&= R + \d B + \d^2 C.
\end{split}\]
with the matrix $R=[y',b,d] \in SO(3)$. 
The matrix $R^\transp \nabla y_\d$ is thus a perturbation of the identity
matrix $I_3$ and a Taylor expansion of the energy density yields with 
\[
R^\transp \nabla y_\d = I_3 + \d R^\transp B + \d^2 R^\transp C
\]
that we have
\[\begin{split}
W(\nabla y_\d) & = W(R^\transp \nabla y_\d)  \\
& = \frac12 Q_3 \big(R^\transp \big[x_2b'+x_3d',\d^2 \p_2 \b, \d^2 \p_3\b\big]\big) + o(\d^2).
\end{split}\]
Letting $\a = \d^2 R^\transp \b$ and noting that $R$ does not depend on $x_2$ and $x_3$ we have
\[
R^\transp \big[x_2b'+x_3d',\d^2 \p_2 \b, \d^2 \p_3\b \big]
= R^\transp R' \begin{bmatrix} 0 \\ x_2 \\ x_3 \end{bmatrix} + \big[0,\p_2 \a,\p_3 \a\big].
\]
We note that since $R^\transp R = I_3$ we have $(R^\transp)'R = -R^\transp R'$ so that
$R^\transp R'$ is skew-symmetric. A minimization of the integral of the energy density
over the cross section $\d S$ motivates defining for skew-symmetric matrices 
$A = [a_1,a_2,a_3]\in \R^{3\times 3}$ the reduced quadratic form $Q_\rod$ via 
\[
Q_\rod(A) = \min_{\a \in H^1(S;\R^3)}
 \int_{\d S} Q_3 \big(\big[x_2 a_2 + x_3 a_3,\p_2 \a,\p_3 \a\big]\big) \dv{x_2}\dv{x_3}.
\]
With the particular representation of $Q_3$ by the Lam\'e parameters one finds with
the entries $a_{ij} = -a_{ji}$ of $A$ for a circular cross section $S= B_{1/\pi}(0)$ 
that 
\[
Q_\rod(A) = \frac{1}{2\pi} \frac{\mu(3\lambda+2\mu)}{\lambda+\mu} (a_{12}^2 + a_{13}^2) 
+ \frac{\mu}{2\pi} a_{23}^2.
\]
The constant factors on the right-hand side define the bending and torsion
rigidities and are abbreviated by
\[
\cb = \frac{1}{2\pi} \frac{\mu(3\lambda+2\mu)}{\lambda+\mu}, \quad
\ct = \frac{\mu}{2\pi}.
\]
We always assume $\lambda,\mu>0$ so that $\cb \ge 2 \ct$. 
For the particular matrix $A=R^\transp R'$ and $R=[y',b,d] \in SO(3)$ we 
have that
\[
a_{12} = y'' \cdot b, \quad a_{13} = y''\cdot d, \quad a_{23} = b' \cdot d.
\]
Noting that $y''\cdot y'=0$ we have that
\[ 
a_{12}^2 + a_{13}^2 = |y''|^2 
\]
is the squared curvature of the deformed rod and that
\[
a_{23}^2 = (b' \cdot d)^2 = (d' \cdot b)^2
\]
is its squared torsion. We eliminate the variable $d$ via the identity
$d= y'\times b$ in what follows. We thus expect that the deformation $y:(0,L)\to \R^3$
of the centerline of a thin rod and the unit normal vector field $b:(0,L)\to \R^3$ 
solve the following dimensionally reduced problem:
\[\label{eq:min_rods}
\tag{${\rm P}_\rod$} \left\{ 
\begin{array}{l}
\text{Minimize} \\
\quad \displaystyle{I_\rod[y,b] = \frac{\cb}{2} \int_0^L |y''|^2 \dv{x_1} 
+ \frac {\ct}{2} \int_0^L (b'\cdot (y'\times b))^2 \dv{x_1}} \\[2mm]
\text{in the set } \\[1mm]
 \cA = \big\{ (y,b) \in V_\rod \!:\!
L_\bc^\rod[y,b] = \ell_\bc^\rod, \, |y'| = |b| =1, y'\cdot b = 0 \big\}.
\end{array}\right.
\]
Here, we abbreviate
\[
V_\rod = H^2(0,L;\R^3) \times H^1(0,L;\R^3).
\]
The second part for justifying the dimensionally reduced model consists
in showing that for any sequence $(y_\d)_{\d>0}$ of three-dimensional deformations
with $I_\fulld[y_\d]\le c \d^3$ there exists an appropriate limit $(y,b)\in \cA$
such that 
\[
\liminf_{\d \to 0} I_\fulld[y_\d] \ge I_\rod[y,b].
\]
This so-called compactness property is proved in~\cite{MorMul03} which provides
the complete rigorous dimension reduction in a more general setting. 
We refer the reader to~\cite{LanSin96} for further aspects of the
description of elastic rods. 

\begin{remark} 
To illustrate that the inextensibility or isometry condition $|y'|=1$ arises 
naturally in the dimension reduction we consider the planar deformation of 
a two-dimensional thin beam $\O=(0,L)\times (-\d/2,\d/2)$ with the simple energy
density 
\[
W(F) = \dist^2(F,SO(2)) \approx (1/4) |F^\transp F - I|^2.
\]
We assume that the deformation is given by  
\[
y_\d(x_1,x_2) = y(x_1) + x_2 b(x_1)
\]
for a deformation $y:(0,L)\to \R^2$ of the centerline and a corresponding
normal field $b:(0,L)\to \R^2$, i.e., we have $y'(x_1)\cdot b(x_1) = 0$. 
Noting that
\[
\nabla y_\d  = \big[y' + x_2 b',b\big]
\]
we find that
\[\begin{split}
(\nabla y_\d)^\transp \nabla y_\d -I_2  
& = \begin{bmatrix} |y'|^2-1  &  0 \\ 0   & |b|^2 -1  \end{bmatrix}
+ x_2 \begin{bmatrix} 2 y'\cdot b' & b \cdot b' \\ b \cdot b' & 0 \end{bmatrix}
+ x_2^2 \begin{bmatrix} |b'|^2 & 0 \\ 0 & 0 \end{bmatrix} \\
&= A + x_2 B + x_2^2 C. 
\end{split}\]
We insert this expression into the energy functional and carry out the
integration in vertical direction, i.e. 
\[\begin{split}
I_\d[y_\d] 
& \approx \frac14\int_0^L \int_{-\d/2}^{\d/2} |A+x_2 B+ x_2^2 C|^2 \dv{x_2} \dv{x_1} \\
& =  \frac14 \int_0^L  \d |A|^2 + \frac{\d^3}{12} |B|^2 + \frac{\d^5}{80} |C|^2
+ \frac{\d^3}{12} 2  A:C  \dv{x_1}.
\end{split}\]
For a cubic scaling of the elastic energy we need $A=0$, i.e., $|y'|^2 =1$
and $|b|^2=1$. This implies $b'\cdot b = 0$, hence $|B|^2 = 4 |y'\cdot b'|^2$,
and shows that up to terms of order $\d^5$ we have 
\[
I_\d[y_\d] \approx \frac{\d^3}{12}  \int_0^L |y''|^2 \dv{x_1},
\]
where we used the identities $y'\cdot b = - y'' \cdot b$ and $y''\cdot y' = 0$ 
in combination with the fact that $(y',b)$ is an orthonormal basis in $\R^2$ so
that $(y'\cdot b)'=0$.
\end{remark}

\subsection{Elastic plates}\label{sec:dim_red_plates}
To explain the derivation of the bending model for elastic plates we consider
an isometry $y:\o\to \R^3$ and a corresponding unit normal field $b:\o \to \R^3$,
i.e., we have 
\[
\p_i y(x') \cdot \p_jy(x') = \d_{ij}, 
\]
for $1\le i,j\le 2$ and
\[ 
|b(x')|^2 = 1, \quad \p_j y(x') \cdot b(x') = 0
\]
for almost every $x'\in \o$ and $j=1,2$. We define a deformation $y_\d$ of the three-dimensional
body $\O_\d = \o \times (-\d/2,\d/2)$ by extending $y$ in normal direction, i.e.,
\[
y_\d(x',x_3) = y(x') + x_3 b(x') + (x_3^2/2) \b(x'),
\]
with a quadratic correction term $\b:\o \to \R^3$.
Using the planar gradient $\nabla' = [\p_1,\p_2]$ we have that 
\[
R = \big[\nabla' y, b \big] \in SO(3)
\]
and
\[\begin{split}
\nabla y_\d &= \big[\nabla' y_\d,\p_3 y_\d\big]
= \big[\nabla'y + x_3 \nabla' b + (x_3^2/2) \nabla'\b, b + x_3 \b\big] \\
&= R + x_3 \big[\nabla'b,\b\big] + (x_3^2/2) \big[\nabla' \b,0\big].
\end{split}\]
This implies that 
\[
R^\transp \nabla y_\d - I 
= x_3 R^\transp \big[\nabla' b, \b\big] + (x_3^2/2) \big[\nabla'\b,0\big].
\]
We insert the deformation $y_\d$ into the hyperelastic energy functional,
use the Taylor expansion $W(I+x_3 G) = Q_3(x_3 G) + o(x_3^2 |G|^2)$, 
with $G = R^\transp \nabla y_\d$, and carry out the integration in vertical direction. 
This leads to 
\[\begin{split}
\int_\o & \int_{-\d/2}^{\d/2} W  (\nabla y_\d) \dv{x_3}\dv{x'} 
= \int_\o \int_{-\d/2}^{\d/2} W(R^\transp \nabla y_\d) \dv{x_3}\dv{x'} \\
&= \frac12 \int_\o \int_{-\d/2}^{\d/2} Q_3\big(x_3 R^\transp [\nabla' b,\b] 
 + (x_3^2/2) [\nabla'\b,0]\big) \dv{x_3}\dv{x'} + o(\d^3) \\
&= \frac{\d^3}{24} \int_\o Q_3(R^\transp[\nabla' b,\b]) \dv{x'} + o(\d^3).
\end{split}\]
The correction field $\b:\o\to \R^3$ is eliminated via a pointwise minimization,
i.e., for $M \in \R^{2 \times 2}$ extended by a vanishing third row 
to a matrix $\hM \in \R^{3\times 2}$, we define
\[
Q_\plate(M) = \min_{c\in \R^3} Q_3([\hM,c]).
\]
Since we assume a homogeneous and isotropic material one obtains 
for a symmetric matrix $M \in \R^{2\times 2}$ that
\[
Q_\plate(M) = 2 \mu |M|^2 + \frac{\lambda \mu}{\mu + \lambda/2} \trace(M)^2.
\]
For the matrix $\hM = R^\transp \nabla' b \in \R^{3\times 2}$ the third
row vanishes and its uppper $2\times 2$ submatrix coincides with the
second fundamental form $II$ of the surface parametrized by $y$, i.e., 
\[
II_{ij}(x') = \p_i y(x') \cdot \p_j b(x') = - \p_i\p_j y(x') \cdot b(x'),
\]
for $i,j=1,2$, where in fact $b = \pm \p_1 y \times \p_2 y$. 
Using that $y$ is an isometry we have that the squared mean curvature
is up to a fixed factor given by the identical expressions
\[
|II|^2 = \trace(II)^2 = |D^2 y|^2 = |\Delta y|^2.
\]
Hence, the dimensionally reduced problem seeks an isometric deformation
that minimizes the integral of the squared Hessian: 
\[
\tag{${\rm P}_\plate$} \left\{ \, 
\begin{array}{l}
\text{Minimize} \quad 
\displaystyle{I_\plate[y] = \frac{\cb}{2} \int_\o |D^2 y|^2 \dv{x'}} \quad \text{in the set}\\[2.5mm]
\cA = \big\{y\in H^2(\o;\R^3): (\nabla y)^\transp \nabla y = I_2, \
L_\bc^\plate[y] = \ell_\bc^\plate\big\}.
\end{array}\right.
\]
The bending rigidity is defined by $\cb = 2 \mu + \lambda 2 \mu/(2\mu + \lambda)$.
As in the case of rods, a rigorous derivation additionally requires showing
that the functional defines a general lower bound, i.e.,
establishing a lim-inf inequality, and we refer the reader to~\cite{FrJaMu02a,FrJaMu02b}
for details. Analogously to the functional, also the boundary conditions 
change their nature in the dimension reduction. A fixed part of the
lateral boundary leads to a clamped boundary condition in the 
reduced model which imposes a condition on the deformation and
its gradient. 

\section{Convergent finite element discretizations}\label{sec:fe_discr}
We discuss in this section the discretization of the dimensionally reduced
nonlinear bending models using appropriate finite element methods. Challenges
are the treatment of higher order derivatives and a nonlinear pointwise
constraint. We establish the correctness of the discretizations by showing
that the discrete functionals~$I^h$ converge in the sense of $\G$-convergence
with respect to weak convergence on a space $X$,
cf., e.g.,~\cite{Dalm93-book}, to the continuous, dimensionally reduced 
functional~$I$. We use the terminology
{\em almost-minimizing} for a sequence of objects that are minimizers of a
sequence of functions up to tolerances that converge to zero with~$h$. 
This follows from verifying the following three conditions: 
\begin{itemize}
\item[(a)] {\em Well posedness or equicoercivity:} The discrete functionals 
are uniformly coercive, i.e., if $I^h[y_h]\le c$ then it follows that
$\|y_h\|_X \le c'$ with $h$-independent constants 
$c,c'\ge 0$, and admit discrete minimizers.
\item[(b)] {\em Stability or lim-inf inequality:} If $(y_h)_{h>0} \subset X$ is a 
bounded sequence of discrete almost-minimizers then every weak accumulation
point $y$ belongs to the set of admissible deformations $\cA$ and we have
\[
I[y] \le \liminf_{h\to 0} I^h[y_h].
\]
\item[(c)] {\em Consistency or lim-sup inequality:} For every $y\in \cA$
there exists a sequence $(y_h)_{h>0}$ of admissible discrete deformations
such that $y_h \wto y$ in $X$ and 
\[
I[y] \ge \limsup_{h\to 0} I^h[y_h].
\]
\end{itemize}
It is an immediate consequence of (a)--(c) that sequences of discrete almost-minimizers
accumulate at minimizers of the continuous problem. Well posedness typically
follows from coercivity properties of the functional $I$ while the stability
is established with the help of lower semicontinuity properties of $I$. 
If the union of discrete sets of admissible deformations is dense in the
set of admissible deformations then consistency is obtained via 
continuity properties of $I$. We specify these concepts for the finite 
element approximation of elastic deformations of rods and plates in 
what follows. We always use a regular triangulation $\cT_h$ of the domain 
$A=(0,L)$ or $A=\o$ into intervals or triangles, respectively, with a set of
nodes (vertices of elements) denoted $\cN_h$, i.e., 
\[ 
\cN_h = \{z_1,z_2,\dots,z_N\}, \quad \cT_h = \{T_1,T_2,\dots,T_M\},
\]
We often use numerical integration or quadrature, defined with the 
element\-wise applied nodal interpolation via 
\[
(v,w)_h = \int_A \hcI_h [(v\cdot w)] \dv{x}
\]
for elementwise continuous functions $v,w:A\to \R^\ell$ with $A\subset \R^d$ and 
\[
\|v\|_{L^p_h}^p = \sum_{T\in \cT_h} \frac{|T|}{d+1} \sum_{z\in \cN_h\cap T} 
|v(z)|^p.
\]
If $p=2$ we write $\|v\|_h$ instead of $\|v\|_{L^2_h}$. 
We note that these expressions define equivalent scalar products and
norms on 
function spaces containing elementwise polynomials of bounded degree,
cf., e.g.,~\cite{Bart15-book}. 

\subsection{Elastic rods}\label{sec:fem_rods}
For a discretization of the bending-torsion model~\eqref{eq:min_rods}
for elastic rods we first
derive a suitable reformulation of the minimization problem. Recalling that
for an admissible pair $(y,b) \in \cA$ and the vector $d=y'\times b$ 
we have that $[y',b,d] \in SO(3)$ almost everywhere in the interval $(0,L)$, 
we deduce that
\[
|b'|^2 = (b'\cdot y')^2 + (b'\cdot d)^2 + (b'\cdot b)^2.
\]
Since $|b|^2 =1$ the last term on the right-hand side vanishes while the
orthogonality $b\cdot y'=0$ implies that $b'\cdot y' = - b\cdot y''$. We thus
have that 
\[
(b'\cdot d)^2 = |b'|^2 - (b\cdot y'')^2.
\]
This identity leads to the equivalent representation
\[
I_\rod[y,b] = \frac{\cb}{2} \int_0^L |y''|^2 \dv{x_1} + \frac{\ct}{2} \int_0^L |b'|^2 \dv{x_1}
- \frac{\ct}{2} \int_0^L (b\cdot y'')^2 \dv{x_1}.
\]
The dimension reduction of Section~\ref{sec:dim_red_rods} shows that we have
$\cb\ge 2 \ct$ so that the last term is controlled by the first one
and the coercivity of $I_\rod^h$ becomes explicit. Another advantage of this representation
is that the last term is separately concave which allows for an effective 
iterative treatment. 
To define the discrete funtional $I_\rod^h$ 
we consider a partitioning of the reference interval $(0,L)$ defined by
sets of nodes~$\cN_h$ and elements~$\cT_h$. For this partitioning
we define the linear and cubic finite element spaces with different differentiability
requirements via
\[\begin{split}
\cS^{1,0}(\cT_h)
&= \big\{\phi_h \in C^0([0,L]): \phi_h|_T \in P_1(T) \text{ for all } T\in \cT_h\big\},\\
\cS^{3,1}(\cT_h) 
&= \big\{v_h\in C^1([0,L]): v_h|_T \in P_3(T) \text{ for all }T\in \cT_h\big\},
\end{split}\]
with sets of polynomials of degree at most $k$ on $T$ given by $P_k(T)$. 
The degrees of freedom of the finite element spaces are are depicted in 
Figure~\ref{fig:one_dim_fe}.

\begin{figure}
\input{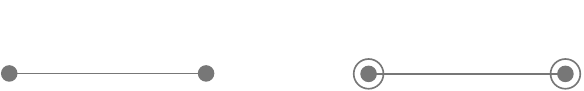_t}
\caption{\label{fig:one_dim_fe} 
Degrees of freedom of piecewise linear, continuous and piecewise cubic, continuously
differentiable finite element functions. Filled dots indicate function values and
circles evaluations of derivatives.} 
\end{figure}

It is straightforward to verify that there exist nodal bases $(\varphi_z)_{z\in \cN_h}$ 
and $(\psi_{z,j})_{z\in \cN_h,j=1,2}$
such that for all $\phi_h\in \cS^{1,0}(\cT_h)$ and $v_h\in \cS^{3,1}(\cT_h)$ we have
\[\begin{split}
\phi_h &= \sum_{z\in \cN_h} \phi_h(z) \varphi_z, \\
v_h &= \sum_{z\in \cN_h} v_h(z) \psi_{z,0} + \sum_{z\in \cN_h} v_h'(z) \psi_{z,1}.
\end{split}\]
The right-hand sides define nodal interpolation operators $\cI_h^{1,0}$ and $\cI_h^{3,1}$
on $C^0([0,L])$ and $C^1([0,L])$, respectively. For ease of notation we use the
abbreviation 
\[
V_\rod^h = \cS^{3,1}(\cT_h)^3 \times \cS^{1,0}(\cT_h)^3 .
\]
For efficient numerical quadrature
we introduce the elementwise averaging operator $Q_h$ defined for 
a vector field $v\in L^1(0,L;\R^3)$ and every element $T\in \cT_h$ via 
\[
Q_h v|_T = |T|^{-1} \int_T v \dv{x_1}.
\]
With the product finite element space $V_\rod^h$ and the operator $Q_h$ we consider
the following discretization of the minimization problem~\eqref{eq:min_rods} in which
the pointwise orthogonality relation $y'\cdot b =0$ is approximated via a penalty term: 
\[\tag{${\rm P}_\rod^{h,\veps}$} 
\left\{ 
\begin{array}{l}
\text{Minimize}\quad \displaystyle{I_\rod^{h,\veps}[y_h,b_h] =  \frac{\cb}{2} \int_0^L |y_h''|^2 \dv{x_1} 
+ \frac{\ct}{2} \int_0^L |b_h'|^2 \dv{x_1}} \\[2mm]
\qquad\qquad \displaystyle{- \frac{\ct}{2} \int_0^L (Q_h b_h \cdot y_h'')^2 \dv{x_1} 
+ \frac{1}{2\veps} \int_0^L \cI_h^{1,0}[(y_h'\cdot b_h)^2]\dv{x_1}}  \\[2.5mm]
\text{in the set} \ \cA_h = \{(y_h,b_h)\in V_\rod^h: 
L_\bc^\rod[y_h,b_h] = \ell_\bc^\rod, \\[2.5mm]
\qquad\qquad \qquad\qquad\qquad\qquad |y_h'(z)| = |b_h(z)| = 1 \text{ f.a. }z\in \cN_h \big\}.
\end{array}\right.
\]
Note that the constraints are imposed on particular degrees of freedom
which makes the method practical. We have the following existence and 
convergence result. 

\begin{proposition}[Convergent approximation]
For every pair $(h,\veps)>0$ there exists a minimizer $(y_h,b_h)\in \cA_h$ 
for $I_\rod^{h,\veps}$ satisfying 
\[
\|y_h\|_{H^2} + \|b_h\|_{H^1}\le c.
\]
As $(h,\veps)\to 0$ we have that every accumulation point of a sequence of 
discrete almost-minimizers is a minimizer for $I_\rod$ in $\cA$. 
\end{proposition}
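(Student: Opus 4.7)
My plan is to establish Parts~1 and~2 by verifying the three conditions (a)--(c) of the $\Gamma$-convergence framework with respect to the weak topology of $X = H^2(0,L;\R^3)\times H^1(0,L;\R^3)$, applied to the sequence of functionals $I_\rod^{h,\veps}$ as $(h,\veps)\to 0$.

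For Part~1, $\cA_h$ is the intersection of a closed affine subspace of the finite-dimensional space $V_\rod^h$ with a smooth compact manifold cut out by the nodal conditions $|y_h'(z)|^2=|b_h(z)|^2=1$, and $I_\rod^{h,\veps}$ is continuous. The only non-routine step is coercivity. Because a piecewise linear interpolant of unit nodal vectors has Euclidean norm at most one and the cellwise mean does not increase this bound, $|Q_h b_h|\le 1$ pointwise; combined with $\cb\ge 2\ct$ this yields
\[
I_\rod^{h,\veps}[y_h,b_h] \ge \frac{\cb-\ct}{2}\|y_h''\|^2 + \frac{\ct}{2}\|b_h'\|^2,
\]
and a Poincar\'e-type estimate supplied by $L_\bc^\rod$ upgrades this to an $H^2\times H^1$ bound. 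The direct method then gives a minimizer.

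For Part~2, equicoercivity follows by applying the same bound to any sequence of almost-minimizers (whose energies are bounded uniformly, e.g.\ by the energy of the nodal interpolant of a fixed admissible configuration). Extracting a subsequence, $y_h\wto y$ in $H^2$, $b_h\wto b$ in $H^1$, and the compact embeddings $H^2\hookrightarrow C^1$ and $H^1\hookrightarrow C^0$ give $y_h\to y$ in $C^1$ and $b_h\to b$ in $C^0$. The nodal unit-norm constraints then force $|y'|=|b|=1$ everywhere, while $\|y_h'\cdot b_h\|_h^2\le 2\veps\, I_\rod^{h,\veps}[y_h,b_h]=\cO(\veps)$ and equivalence of $\|\cdot\|_h$ and $\|\cdot\|_{L^2}$ on finite-degree polynomials give $y'\cdot b=0$ a.e.; hence $(y,b)\in \cA$. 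For consistency, given $(y,b)\in\cA$ I would take the Hermite interpolant $\tilde y_h=\cI_h^{3,1}y$ and the Lagrange interpolant $\tilde b_h=\cI_h^{1,0}b$. These preserve nodal values and derivatives, so the nodal constraints hold exactly and at every node $\tilde y_h'(z)\cdot\tilde b_h(z) = y'(z)\cdot b(z) = 0$, making the penalty contribution identically zero. Standard interpolation estimates (combined with mollification if needed) give $I_\rod^{h,\veps}[\tilde y_h,\tilde b_h]\to I_\rod[y,b]$, and almost-minimization yields $\limsup I_\rod^{h,\veps}[y_h,b_h]\le I_\rod[y^\ast,b^\ast]$ for every admissible $(y^\ast,b^\ast)$.

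The main obstacle is the stability (lim-inf) inequality, because the concave term $-\frac{\ct}{2}\int(Q_hb_h\cdot y_h'')^2\dv{x_1}$ is only \emph{upper} semicontinuous under weak convergence. I would handle it via the pointwise identity
\[
|y_h''|^2 - (Q_hb_h\cdot y_h'')^2 = (1-|Q_hb_h|^2)|y_h''|^2 + |Q_hb_h|^2\,|(I-P_h)y_h''|^2,
\]
where $P_h = Q_hb_h\otimes Q_hb_h/|Q_hb_h|^2$ is the rank-one orthogonal projection onto $Q_hb_h$, well defined for small $h$ since $|Q_hb_h|\to 1$ uniformly. The first summand vanishes in $L^1$ because $1-|Q_hb_h|^2\to 0$ in $L^\infty$ while $|y_h''|^2$ is bounded in $L^1$. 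Since $P_h\to P_b$ uniformly, $(I-P_h)y_h''\wto (I-P_b)y''$ weakly in $L^2$, and weak lower semicontinuity of the squared $L^2$ norm yields
\[
\liminf_{h\to 0}\int_0^L |Q_hb_h|^2 |(I-P_h)y_h''|^2\dv{x_1}\ge \int_0^L \bigl(|y''|^2 - (b\cdot y'')^2\bigr)\dv{x_1}.
\]
Adding the standard weak lower semicontinuity of $\|y_h''\|^2$ and $\|b_h'\|^2$ and invoking the identity $\|b'\|^2-\int(b\cdot y'')^2=\int(b'\cdot(y'\times b))^2$, valid on $\cA$, gives $\liminf I_\rod^{h,\veps}[y_h,b_h]\ge I_\rod[y,b]$. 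Coupled with the lim-sup bound, $I_\rod[y,b]\le I_\rod[y^\ast,b^\ast]$ for every $(y^\ast,b^\ast)\in\cA$, so $(y,b)$ is a minimizer.
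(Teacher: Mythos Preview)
Your argument is correct and follows the same $\Gamma$-convergence route as the paper's sketch: coercivity via $|Q_hb_h|\le 1$ and $\cb\ge 2\ct$, the liminf inequality by exploiting the strong $L^\infty$ convergence of $Q_hb_h$ to $b$ so that the quadratic form $[y_h'']^\transp(I_3-Q_hb_h\otimes Q_hb_h)y_h''$ becomes weakly lower semicontinuous (your normalized-projection decomposition is a harmless variant of the paper's $P_{b_h}=I_3-Q_hb_h\otimes Q_hb_h$), and the limsup via nodal interpolants of an admissible pair, with the nice observation that the penalty term then vanishes identically.

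One small slip worth fixing: the claimed equivalence $\|\cdot\|_h\sim\|\cdot\|_{L^2}$ on elementwise polynomials of bounded degree is false for degree $\ge 2$ (take $v|_T=(x-z_i)(x-z_{i+1})$, which gives $\|v\|_h=0$), and $y_h'\cdot b_h$ is piecewise cubic. The conclusion $y'\cdot b=0$ still follows, but via $\|\cI_h^{1,0}[y_h'\cdot b_h]\|_{L^2}\sim\|y_h'\cdot b_h\|_h\to 0$ together with $\cI_h^{1,0}[y_h'\cdot b_h]\to y'\cdot b$ in $L^2$, the latter being a consequence of the uniform convergences $y_h'\to y'$ and $b_h\to b$ that you already established.
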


\begin{proof}[Proof (sketched)] 
We outline the main arguments of the proof and refer the reader 
to~\cite{Bart13c,BarRei19-in-prep-pre} for details.  \\
(a) Let $(y_h,b_h)\in \cA_h$ with $I_\rod^{h,\veps}[y_h,b_h]\le c$. The coercivity
of the discretized functional follows from the fact that $\cb\ge 2 \ct$ 
and the identity
\[\begin{split}
I_\rod^{h,\veps}[y_h,b_h] = &  \frac{\cb-\ct}{2} \int_0^L |y_h''|^2 \dv{x_1}
+ \frac{\ct}{2} \int_0^L |b_h'|^2 \dv{x_1} \\
& + \frac{\ct}{2} \int_0^L [y_h'']^\transp P_{b_h} [y_h''] \dv{x_1}
+ \frac{1}{2\veps} \int_0^L \cI_h[(y_h'\cdot b_h)^2]\dv{x_1},
\end{split}\]
with the positive semi-definite matrix
\[
P_{b_h} = I_3 - (Q_h b_h) \otimes (Q_h b_h).
\]
The discrete coercivity and the continuity properties of the functional
$I_\rod^{h,\veps}$ imply the existence of discrete minimizers. \\
(b) Given a sequence of discrete almost-minimizers $(y_h,b_h)_{h,\veps>0}$ one first checks
that accumulation points $(y,b)$ as $(h,\veps)\to 0$ belong to the continuous admissible set $\cA$.
Noting that $b_h \to b$ strongly in $L^\infty$ we find that  
\[
I_\rod[y,b] \le \liminf_{(h,\veps) \to 0} I_\rod^{h,\veps} [y_h,b_h].
\]
(c) It remains to show that $I_\rod[y,b]$ is minimal. For this, we choose 
a smooth almost-minimizing pair $(\ty,\tb)\in \cA$ obtained from an appropriate
regularization of a minimizing pair and verify that the sequence of interpolants
$(\ty_h,\tb_h)$ satisfies $\lim_{(h,\veps)\to 0} I_\rod^{h,\veps}[y_h,b_h] = I_\rod[\ty,\tb]$. 
\end{proof}

\begin{remark}
We note that the result of the proposition can be also be established if the orthogonality
relation $y_h'\cdot b_h=0$ is imposed exactly in the nodes of the triangulation.
For an efficient numerical solution of the minimization problem the 
approximation via a separately convex term is advantageous as this allows
for a decoupled treatment of the variables.
\end{remark}

\subsection{Elastic plates}\label{sec:fem_plates}
Constructing finite element spaces that provide convergent second order 
derivatives and which are efficiently implementable is
significantly more challenging in two space dimensions. Among the various 
possibilities is the {\em discrete Kirchhoff triangle}, cf., e.g.,~\cite{Brae07-book},
which defines a 
nonconforming finite element method in the sense that its elements do not
belong to $H^2$. The space can be seen as a natural generalization
of the space of one-dimensional cubic $C^1$ functions since the degrees of 
freedom are the deformations and deformation gradients at the nodes of a 
triangulation which are appropriately interpolated on the individual elements. 
To define this finite element space
we choose a triangulation $\cT_h$ of $\o$ into triangles and set
\[\begin{split}
\cS^\dkt(\cT_h) &= \{w_h\in C(\overline{\o}): w_h|_T \in P_{3-}(T)
\text{ for all } T\in\cT_h, \\ & \qquad \qquad \qquad  \qquad \qquad
 \nabla w_h \text { continuous at all }
z\in \cN_h\}, \\
\cS^{2,0}(\cT_h) &= \{ q_h \in C(\overline{\o}): q_h|_T \in P_2(T)
\text{ for all } T\in \cT_h\}.
\end{split}\]
Here, $P_{3-}$ denotes the subset of cubic polynomials
on $T$ obtained by eliminating the degree of freedom associated with
the midpoint $z_T$ of $T$, i.e., we have 
\[
P_{3-}(T) = \Big\{ p \in P_3(T): 
p(z_T) = \frac13 \sum_{z\in \cN_h\cap T} 
\big[p(z) + \nabla p(z) \cdot (z_T-z)\big] \Big\}.
\]
The degrees of freedom in $\cS^\dkt(\cT_h)$ are the function values
and the derivatives at the vertices of the elements. It is interesting
to note that a particular basis
for $\cS^\dkt(\cT_h)$ will not be needed. A canonical interpolation operator 
$\cI_h^\dkt: C^1(\overline{\o}) \to \cS^\dkt(\cT_h)$
is defined by requiring that the identities
\[
\cI_h^\dkt w(z) = w(z), \quad \nabla \cI_h^\dkt w(z) = \nabla w(z)
\]
hold at all nodes $z\in \cN_h$. The employed finite element spaces are 
depicted in Figure~\ref{fig:two_dim_fe}.

\begin{figure}
\input{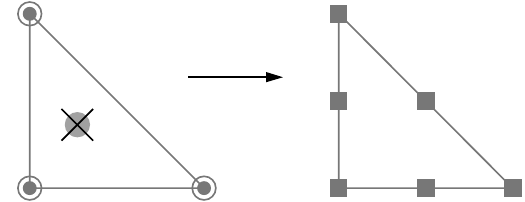_t}
\caption{\label{fig:two_dim_fe} 
Degrees of freedom of finite element spaces using reduced cubic polynomials and
quadratic vector fields. Filled dots indicate function values,
circles evaluations of derivatives, and squares vectorial function values.
One degree of freedom is eliminated from the set of cubic polynomials.} 
\end{figure}

Crucial for the finite element discretization
of the bending problem is the definition of a discrete gradient operator
\[
\nabla_h : \cS^\dkt(\cT_h) \to \cS^{2,0}(\cT_h)^2
\]
which allows us to define discrete second order derivatives of
functions $w_h\in \cS^\dkt$ via
\[
D_h^2 w_h = \nabla \nabla_h w_h.
\]
Here we make use of the fact that $\nabla_h w_h\in H^1(\O;\R^2)$.
The discrete gradient operator $\nabla_h$ is for given $w_h\in \cS^\dkt(\cT_h)$ 
defined as the unique piecewise quadratic, continuous 
vector field $q_h\in \cS^{2,0}(\cT_h)^2$ that satisfies the condition
\[
q_h(z) = \nabla w_h(z)  
\]
for all $z\in \cN_h$ while the degrees of freedom associated with the sides
of elements are defined by the two conditions 
\[\begin{split}
q_h(z_S) \cdot n_S &=  \frac12 \big(\nabla w_h(z_S^1) + \nabla w_h(z_S^2)\big)\cdot n_S, \\
q_h(z_S) \cdot t_S &= \nabla w_h(z_S) \cdot t_S,
\end{split}\]
for all sides $S=[z_S^1,z_S^2] \in \cS_h$ with normals $n_S$, tangent vectors $t_S$,
and midpoints $z_S = (z_S^1+z_S^2)/2$. For $w\in C^1(\overline{\o})$, we set 
\[
\nabla_h w = \nabla_h \cI_h^\dkt w.
\]
With the discrete second derivatives we are in a position to state 
the finite element discretization of the plate bending model:
\[
\tag{${\rm P}_\plate^h$} 
\left\{ \, \begin{array}{l}
 \text{Minimize}\quad 
\displaystyle{I_\plate^h[y_h] =\frac{\cb}{2} \int_\o |D_h^2 y_h|^2 \dv{x'}} \quad 
\text{in the set}\\[2.5mm] 
 \cA_h = \big\{y_h \in \cS^\dkt(\cT_h)^3: \, L_\bc^\plate[y_h] = \ell_\bc^\plate, \\[2mm]
\qquad\qquad\qquad\qquad
[\nabla y_h(z)]^\transp \nabla y_h(z) = \Id_2 \text{ f.a. } z\in \cN_h \big\}.
\end{array}\right.
\]
To prove the correctness of this discretization we show that existing
finite element minimizers accumulate at admissible isometries, incorporate that the bending
energy is weakly lower semicontinuous, and use that isometries can be
approximated by smooth isometries which is a result proved in~\cite{Pakz04,Horn11}.

\begin{theorem}[Convergent approximation]
For every $h>0$ there exists a minimizer $y_h\in \cA_h$. 
If $(y_h)_{h>0}$ is a sequence of 
almost-minimizers, then $\|\nabla y_h\| \le c$, for all $h>0$, and every
accumulation point $y\in H^1(\o;\R^3)$ of the sequence is a 
strong accumulation point, belongs to the continuous admissible 
set $\cA$ and is minimal for $I_\plate$. 
\end{theorem}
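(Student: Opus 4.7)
The plan is to verify the three $\Gamma$-convergence ingredients (a)--(c) from the beginning of Section~\ref{sec:fe_discr}, adapted to the discrete Kirchhoff triangle, following the pattern of the preceding proposition on rods. For existence of a discrete minimizer, observe that the nodal isometry forces $|\nabla y_h(z)|^2 = 2$ at every $z\in \cN_h$, so $\cA_h$ is a closed and bounded subset of the finite-dimensional space $\cS^\dkt(\cT_h)^3$ after the boundary conditions are enforced via $L_\bc^\plate$, and $I_\plate^h$ is continuous on it. Assuming $\cA_h$ is nonempty, the direct method yields a minimizer $y_h$. The uniform bound $\|\nabla y_h\| \le c$ follows by transferring the nodal bound $\|\nabla y_h\|_{L^\infty_h} \le \sqrt{2}$ to the continuous $L^2$ norm using norm equivalence on the polynomial space in which the components of $\nabla y_h$ live, together with a Poincar\'e-type estimate anchored by the boundary condition.

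For the stability step (b), I first extract a subsequence along which $y_h \wto y$ in $H^1$ and, by Rellich, $y_h \to y$ in $L^2$. The energy bound $I_\plate^h[y_h] \le c$ gives $\|\nabla(\nabla_h y_h)\|_{L^2} \le c$, so that $\nabla_h y_h$ is uniformly bounded in $H^1(\o;\R^{3\times 2})$ and strongly convergent in $L^2$ along a further subsequence. A standard interpolation estimate comparing the true gradient $\nabla y_h$ with the discrete gradient $\nabla_h y_h$ on $\cS^\dkt(\cT_h)$ yields $\|\nabla y_h - \nabla_h y_h\|_{L^2} \to 0$, so $\nabla y_h$ converges strongly in $L^2$, and the accumulation is strong in $H^1$. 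To verify $y\in \cA$, I combine the nodal identity $[\nabla y_h(z)]^\transp \nabla y_h(z) = I_2$ with a comparison of the form $\|F_h^\transp F_h - I_2\|_{L^2} \le c(h + \|F_h^\transp F_h - I_2\|_{L^2_h})$ for $F_h$ in the relevant polynomial space, and pass to the limit using the strong $L^2$ convergence of $\nabla y_h$ to obtain $(\nabla y)^\transp \nabla y = I_2$ almost everywhere. The boundary conditions pass by continuity of $L_\bc^\plate$. Finally, the identification $D_h^2 y_h = \nabla(\nabla_h y_h) \wto \nabla(\nabla y) = D^2 y$ in $L^2$ together with weak lower semicontinuity of the $L^2$ norm yields $I_\plate[y] \le \liminf_{h\to 0} I_\plate^h[y_h]$.

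For consistency (c), I invoke the density result of Pakzad and Hornung cited in the excerpt: every $y \in \cA$ can be approximated in $H^2$ by smooth isometries $\widetilde{y}_\veps \in C^\infty(\overline{\o};\R^3)$ respecting the clamped boundary conditions. Setting $\widetilde{y}_{h,\veps} = \cI_h^\dkt \widetilde{y}_\veps$ componentwise produces an element of $\cA_h$, since a pointwise isometry is a fortiori an isometry at the nodes, and standard interpolation error estimates in $H^2$ combined with stability of $\nabla_h$ give $D_h^2 \widetilde{y}_{h,\veps} \to D^2 \widetilde{y}_\veps$ in $L^2$, hence $I_\plate^h[\widetilde{y}_{h,\veps}] \to I_\plate[\widetilde{y}_\veps]$ as $h\to 0$. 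A diagonal extraction $\veps = \veps(h) \to 0$ produces a recovery sequence realizing the lim-sup inequality, and the three conditions (a)--(c) together imply that any weak accumulation point of discrete almost-minimizers is a strong $H^1$ accumulation point, lies in $\cA$, and is minimal for $I_\plate$.

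The main obstacle is the passage from the nodal discrete isometry to the pointwise almost-everywhere isometry of the limit. This hinges on obtaining strong $L^2$ convergence of $\nabla y_h$, which in turn rests on a careful analysis of the nonconforming DKT framework: one must establish that $\nabla y_h - \nabla_h y_h \to 0$ in $L^2$ (so that the nonconformity is controlled in a quantitative sense) and that the $H^1$-boundedness of $\nabla_h y_h$ really is delivered by the energy bound through the identity $D_h^2 = \nabla \nabla_h$. Both rely on the particular construction of $\cS^\dkt(\cT_h)$ and the quadratic target space $\cS^{2,0}(\cT_h)^2$ and form the technical core of the argument.
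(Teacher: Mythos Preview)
Your proposal is correct and follows the same $\Gamma$-convergence architecture as the paper's proof, which also proceeds by verifying equicoercivity/existence, the lim-inf inequality via weak limits of $D_h^2 y_h$ and $\nabla y_h$ and their compatibility $\xi = D^2 y$, and the lim-sup inequality via the Pakzad--Hornung density result combined with $\cI_h^\dkt$-interpolation of a smooth isometry. The only noteworthy deviation is in step~(a): the paper obtains existence and the uniform bound from the fact that $z_h \mapsto \|D_h^2 z_h\|$ is a norm on the subspace with homogeneous boundary data (i.e.\ coercivity of $I_\plate^h$), whereas you argue directly that $\cA_h$ is a closed bounded subset of a finite-dimensional space using the nodal isometry constraint and a Poincar\'e estimate; both routes are valid, and yours has the slight advantage that the gradient bound is obtained independently of the energy level.
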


\begin{proof}[Proof (sketched)]
We follow the typical steps for establishing a $\Gamma$-convergence result 
provided in~\cite{Bart13a,Bart15-book}. \\
(a) Using the boundary conditions included in the set $\cA_h$ it follows
that the mapping $z_h\mapsto \|D_h^2 z_h\|$ is a norm on the subset of 
$\cS^\dkt(\cT_h)^3$ with functions satisfying corresponding homogeneous
boundary conditions. This leads to a coercivity property and the existence
of discrete solutions. \\
(b) The uniform discrete coercivity property implies that the sequences $(D_h^2 y_h)_{h>0}$
and $(\nabla y_h)_{h>0}$ have weak accumulation points $\xi$ and $\nabla y$
in $L^2$ 
which are compatible in the sense that $\xi = D^2 y$. Moreover, we have
that $y\in \cA$ and weak lower semicontinuity of the $L^2$ norm shows that
\[
I_\plate[y] \le \liminf_{h\to 0} I_\plate^h [y_h].
\]
(c) Let $y\in \cA$ be a minimizer for $I_\plate$. The continuity of the 
functional $I_\plate$ with respect to the strong topology in $H^2$ 
in combination with the density results for smooth isometries established
in~\cite{Horn11} allow us to assume that $y$~is smooth. We may thus define
an approximating sequence of finite element functions by setting
$\ty_h = \cI_h^\dkt[y]$. Approximation properties of the interpolation
operator lead to the inequality
\[
I_\plate[y] \ge \limsup_{h\to 0} I_\plate^h [\ty_h],
\]
which proves the statement. 
\end{proof}

\section{Iterative solution via constrained gradient flows}\label{sec:grad_flow}
The practical solution of the finite element discretizations of the nonlinear bending
problems is nontrivial due to the presence of nonlinear pointwise constraints and
the corresponding lack of higher regularity properties. To provide a reliable strategy
that decreases the energy we adopt gradient flow strategies. Our estimates show that
these converge to stationary, low energy configurations. We will always use a
linearized treatment of the constraints which is then discretized semi-implicitly. This 
makes the iterative scheme practical. To illustrate the main idea, consider the
following abstract minimization problem in a Hilbert space $X \subset L^2(\O;\R^\ell)$:
\begin{equation}\label{eq:abstract_min}
\tag{${\rm M}$} \left\{
\begin{array}{l}
\text{Minimize}\quad  I[y] \\[2mm]
\text{in $X$ subject to } G[y] = 0.
\end{array}\right.
\end{equation}
Here, we assume that the constraint is understood pointwise with a 
function $G:\R^\ell\to \R$. The Euler--Lagrange equations for 
the problem are then formally given by the identity 
\[
I'[y;w] + (\l,G'[y;w]) = 0
\]
for all $w\in X$ with a Lagrange multiplier $\l\in L^1(\O)$. Note that the  
term involving~$\l$ disappears if~$w$ satisfies $G'[y;w]=0$ and that this is sufficient to 
characterize a stationary point subject to the constraint. 
The corresponding gradient flow is formally defined via
\[
\p_t y = - \nabla_X I[y] -\l G'[y,\cdot] \quad \text{subject to} \quad G'[y;\p_t y] = 0. 
\]
Our corresponding time-stepping scheme uses the backward difference quotient operator
$d_t a^k = (a^k-a^{k-1})/\tau$ for a step-size $\tau>0$ and determines iterates via
the linearly constrained problems
\[
d_t y^k = -\nabla_X I[y^k]-\l^k G'[y^{k-1},\cdot] \quad \text{subject to} \quad G'[y^{k-1};d_t y^k] = 0.
\]
We specify the meaning of the iterative scheme in the following algorithm. 

\begin{algorithm}[Abstract constrained gradient descent]\label{alg:abstr_constr_gradflow}
Let $y^0\in X$ be such that $G[y^0]=0$ and $I[y^0]<\infty$ and choose $\tau>0$,
set $k=1$.  \\
(1) Compute $y^k \in X$ such that 
\[
(d_ty^k, w)_X +  I'[y^k;w] = 0 
\]
for all $w \in X$  under the constraints $d_ty^k,\, w \in \ker G'[y^{k-1}]$, i.e., 
\[
G'[y^{k-1};d_t y^k] = 0, \quad G'[y^{k-1};w] = 0.
\]
(2) Stop the iteration if $\|d_t y^k\|_X \le \veps_{\rm stop}$; otherwise
increase $k\to k+1$ and continue with~(1). 
\end{algorithm}

We remark that it is useful to regard $d_t y^k$ rather than $y^k$ 
as the unknown in the iteration steps. In particular, we may eliminate 
$y^k$ via the identity $y^k = y^{k-1}+\tau d_t y^k$ with the known function $y^{k-1}$. 
A geometric interpretation of the iteration is that given an iterate $y^{k-1}$ 
the correction $d_t y^k$ is computed in the tangent space of the level set $\cM^{k-1}$
of~$G$ defined by the value $G[y^{k-1}]$. Note that we do not use a 
projection step onto the zero level set of $G$ since this will in general
not be energy stable. Figure~\ref{fig:abstr_constr_gradflow} illustrates 
the conceptual idea of Algorithm~\ref{alg:abstr_constr_gradflow}. 

\begin{figure}[h]
\input{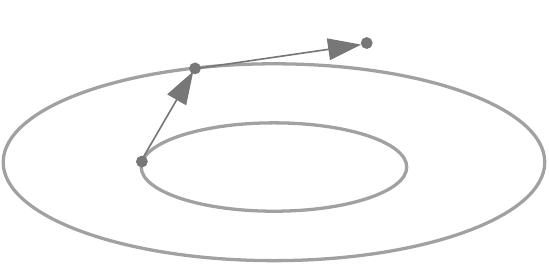_t}
\caption{\label{fig:abstr_constr_gradflow} Illustration of the 
iteration of Algorithm~\ref{alg:abstr_constr_gradflow}: corrections are 
computed in tangent spaces of level sets $\cM^\ell$ of $G$ defined
by the values $G[y^\ell]$.}
\end{figure}

The following theorem states the main features of 
Algorithm~\ref{alg:abstr_constr_gradflow}, i.e., its unconditional 
energy stability with the resulting convergence to a stationary 
configuration of lower energy and the control of the constraint violation
by the step size. 

\begin{theorem}[Convergent iteration]\label{thm:abstract_flow}
Assume that $I$ is convex, coercive, continuous, and Fr\'echet differentiable
on $X$ and $G:\R^\ell \to \R$ is twice differentiable 
with uniformly bounded second derivative, i.e., we have 
$G''[r;s,s]\le 2 c_G |s|^2$ for all $r,s\in \R^\ell$. 
Then, the iterates of Algorithm~\ref{alg:abstr_constr_gradflow}
are uniquely defined and satisfy for $L=0,1,2,\dots$ 
the energy estimate  
\[
I[y^L] + \tau \sum_{k=1}^L \|d_t y^k\|_X^2 \le I[y^0].
\]
Moreover, if $\|\cdot\|_\Hom$ is a norm on $X$ with the property
$\||z|^2\|_\Hom \le c_\Hom \|z\|_X^2$ for all $z\in X$ then we have
the constraint violation bound
\[
\max_{k=1,\dots,L} \|G(y^k)\|_\Hom \le \tau c_G c_\Hom e_0,
\]
where $e_0 = I[y^0]$. 
In particular, we have that $\|d_t y^k\|_X\to 0$ and 
Algorithm~\ref{alg:abstr_constr_gradflow}
terminates within a finite number of iterations. The output $y^L \in X$
satisfies the residual estimate 
\[
\sup_{\substack{w\in X\setminus \{0\}\\ G'[y^{L-1};w] = 0}}
\frac{|I'[y^L;w]|}{\|w\|_X} \le \veps_{\rm stop}.
\]
\end{theorem}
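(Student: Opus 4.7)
The plan is to read the iterative scheme as the Euler--Lagrange equation of a sequence of strictly convex minimization problems on closed affine subspaces, and then exploit convexity plus a pointwise Taylor expansion to obtain the energy decay and the constraint violation bound.

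First I would recast step (1) as follows. The kernel $\cF^{k-1} = \ker G'[y^{k-1}]$ is a closed linear subspace of $X$, so the affine set $y^{k-1} + \tau \cF^{k-1}$ is closed and convex. The identity $(d_t y^k,w)_X + I'[y^k;w]=0$ for $w\in \cF^{k-1}$, together with $y^k = y^{k-1}+\tau d_t y^k$, is precisely the first-order optimality condition of
\[
y^k = \operatorname*{argmin}_{v \in y^{k-1}+\tau \cF^{k-1}} \Big\{ \tfrac{1}{2\tau}\|v-y^{k-1}\|_X^2 + I[v] \Big\}.
\]
Convexity, continuity, coercivity of $I$, plus strict convexity of the quadratic penalty on $X$, then give unique solvability on this closed affine subset by the direct method; this handles the well-posedness of the iteration.

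The energy estimate is obtained by choosing the admissible test function $w = d_t y^k$ (which lies in $\cF^{k-1}$ by construction). That yields
\[
\|d_t y^k\|_X^2 = -I'[y^k; d_t y^k].
\]
Convexity of $I$ gives $I[y^{k-1}] \geq I[y^k] + I'[y^k;y^{k-1}-y^k] = I[y^k] - \tau I'[y^k; d_t y^k]$, so $\tau \|d_t y^k\|_X^2 \le I[y^{k-1}] - I[y^k]$. Summing over $k=1,\dots,L$ telescopes to the stated bound. From this, $\sum_k \|d_t y^k\|_X^2$ is summable, hence $\|d_t y^k\|_X\to 0$ and the stopping criterion is met after finitely many steps. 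Finally, the residual estimate is immediate from the EL identity, Cauchy--Schwarz, and the stopping inequality $\|d_t y^L\|_X \le \veps_{\rm stop}$.

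The main obstacle, and the step that requires the hypothesis on $G''$, is the constraint violation bound. The idea is to apply Taylor's theorem pointwise to $G$ along the segment joining $y^{k-1}(x)$ and $y^k(x) = y^{k-1}(x)+\tau d_t y^k(x)$: there exists $\xi^k(x)$ between these values with
\[
G(y^k) = G(y^{k-1}) + \tau\, G'[y^{k-1}; d_t y^k] + \tfrac{\tau^2}{2} G''[\xi^k; d_t y^k, d_t y^k].
\]
The linear term vanishes because $d_t y^k \in \cF^{k-1}$, and the bound on $G''$ gives the pointwise estimate $|G(y^k)| \le |G(y^{k-1})| + \tau^2 c_G |d_t y^k|^2$. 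Iterating and using $G(y^0)=0$ yields $|G(y^k)| \le \tau^2 c_G \sum_{\ell=1}^{k}|d_t y^\ell|^2$ pointwise. Applying the norm $\|\cdot\|_\Hom$, invoking its triangle inequality, the compatibility $\||z|^2\|_\Hom \le c_\Hom \|z\|_X^2$, and finally the energy estimate $\tau\sum_\ell \|d_t y^\ell\|_X^2 \le I[y^0] = e_0$ gives
\[
\|G(y^k)\|_\Hom \le \tau^2 c_G c_\Hom \sum_{\ell=1}^{k} \|d_t y^\ell\|_X^2 \le \tau\, c_G c_\Hom\, e_0,
\]
which is the claimed bound. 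The delicate point is making sure the pointwise linearization is valid in the functional setting (e.g., measurability of $\xi^k$ and the pointwise sense of the linearized constraint), but under the standing assumption that the constraint $G$ is imposed pointwise via a smooth function on $\R^\ell$ this is standard.
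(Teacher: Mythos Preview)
Your proof is correct and follows essentially the same approach as the paper: recasting the iteration as a strictly convex minimization on a closed affine subspace to get well-posedness, testing with $w=d_t y^k$ plus convexity for the energy decay, a pointwise Taylor expansion of $G$ with the vanishing linear term for the constraint bound, and reading off the residual estimate from the stopping criterion. Your treatment is slightly more explicit about the pointwise interpretation and the affine structure of the constraint set, but there is no substantive difference.
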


\begin{proof}
(a) The existence of the iterates follows by applying the direct method 
of the calculus of variations to the minimization problems: 
\[\begin{split}
&\text{Minimize}\quad z\mapsto  \frac{1}{2\tau} \|z-y^{k-1}\|_X^2  + I[z] \\
&\text{subject to} \quad G'[y^{k-1};z]=0. 
\end{split}\]
The solution is unique and the corresponding Euler--Lagrange equation
coincides with the equation that defines the iterates in 
Algorithm~\ref{alg:abstr_constr_gradflow}. \\
(b) Choosing the admissible test function $w = d_t y^k$ and using the 
convexity of $I$ leads to 
\[
\|d_t y^k\|_X^2 + \frac{1}{\tau} \big(I[y^k] - I[y^{k-1}]\big) \le 0.
\]
A multiplication by $\tau$ and summation over $k=1,2,\dots,L$ prove
the energy stability. \\
(c) With a Taylor expansion of $G$ about the iterate $y^{k-1}$ and
the imposed identity $\d G[y^{k-1};d_t y^k]=0$ we find that 
\[\begin{split}
G[y^k] &= G[y^{k-1}] + \frac12  \tau^2 G''[\xi;d_ty^k,d_ty^k].
\end{split}\]
Repeating this argument and noting that $G[y^0]=0$ leads to the estimate 
\[
|G[y^\ell]| \le c_G \tau^2 \sum_{k=1}^\ell |d_t y^k|^2. 
\]
Applying the norm $\|\cdot\|_\Hom$ to the estimate, using the triangle
inequality, and incorporating the assumed 
bound $\||z|^2\|_\Hom \le c_\Hom \|z\|_X^2$ as well as the energy bound
proves the estimate for the constraint violation error. \\
(d) The estimate for $I'[y^L;w]$ is an immediate consequence of
the bound $\|d_t y^L\|_X \le \veps_{\rm stop}$. 
\end{proof}

Examples of pairs of norms $\|\cdot\|_\Hom$ and $\|\cdot\|_X$ that satisfy
the assumed estimate are the $L^1$ norm in combination with the $L^2$
norm or the $L^\infty$ norm together with a Sobolev norm in $H^s$ with $s$
sufficiently large. It is remarkable that the violation of the constraint
is independent of the number of iterations. An explanation for this
is that the updates $d_t y^k$ converge quickly to zero in the gradient
flow iteration. 

\subsection{Elastic rods}\label{sec:iter_rods}
We apply the abstract framework for constrained minimization problems
to the energy functional describing the bending-torsion behavior of 
elastic rods. For a vector field $y_h\in \cS^{3,1}(\cT_h)^3$ we set 
\[
\cF_h[y_h] = \{ w_h \in \cS^{3,1}(\cT_h)^3: \, L_{\bc,y}^\rod[w_h] = 0,
\, y_h'(z)\cdot w_h'(z) = 0 \text{ f.a. } z\in \cN_h\}
\]
while for a vector field $b_h\in \cS^{1,0}(\cT_h)^3$ we define
\[\begin{split}
\cE_h[b_h] = \{v_h\in  \cS^{1,0}&(\cT_h)^3: L_{\bc,b}^\rod[v_h]=0, \, v_h(z)\cdot b_h(z) = 0
\text{ f.a. } z\in \cN_h \}.
\end{split}\]
The functionals $L_{\bc,y}^\rod$ and $L_{\bc,b}^\rod$
are the components of $L_{\bc}^\rod$
corresponding to the variables $y$ and $b$, respectively. 
We generate a sequence $(y_h^k,b_h^k)_{k=0,1,\dots}$ that
approximates a stationary configuration for $I_\rod^{h,\veps}$ 
with the following algorithm.

\begin{algorithm}[Gradient descent for elastic rods]\label{alg:descent_rods}
Choose an initial pair $(y_h^0,b_h^0) \in \cA_h$ and a step size $\tau>0$,
set $k=1$. \\
(1) Compute $d_t y_h^k\in \cF_h[y_h^{k-1}]$ such that for all $w_h \in \cF_h[y_h^{k-1}]$ we have
\[\begin{split}
(d_t y_h^k,w_h)_\star + \cb ([y_h^k]'', w_h'') + & \veps^{-1}([y_h^k]'\cdot b_h^{k-1},w_h' \cdot b_h^{k-1})_h \\
&= \ct \big([Q_h b_h^{k-1}]\cdot [y_h^{k-1}]'',[Q_h b_h^{k-1}] \cdot [w_h]''\big). 
\end{split}\]
(2) Compute $d_t b_h^k\in \cE_h[b_h^{k-1}]$ such that 
for all $r_h\in \cE_h[b_h^{k-1}]$ we have
\[\begin{split}
(d_t b_h^k, r_h)_\dagger + \ct ([b_h^k]',r_h')+ & \veps^{-1}([y_h^k]'\cdot b_h^k,[y_h^k]' \cdot r_h)_h \\
&= \ct \big([Q_h b_h^{k-1}]\cdot [y_h^k]'',Q_h r_h \cdot [y_h^k]''\big). 
\end{split}\]
(3) Stop the iteration if 
\[
\|d_t y_h^k\|_\star + \|d_t b_h^k\|_\dagger  \le \veps_{\rm stop};
\]
otherwise, increase $k\to k+1$ and continue with~(1).
\end{algorithm}

Again, it is useful to view $d_t y_h^k$ and $d_t b_h^k$ as the unknowns
in Steps~(1) and~(2) instead of $y_h^k=y_h^{k-1}+\tau d_t y_h^k$
and $b_h^k = b_h^{k-1}+\tau d_t b_h^k$. 
The algorithm exploits the fact that the penalty term is separately
convex while the nonquadratic contribution to the torsion term is 
separately concave. Therefore, the decoupled semi-implicit 
treatment of these terms is natural and unconditionally energy stable. 

\begin{proposition}[Convergent iteration]
Assume that we have
\[
\|w_h'\|_h \le c_\star \|w_h\|_\star, \qquad
\|r_h\|_h \le c_\dagger \|r_h\|_\dagger 
\]
for all $(w_h,r_h)\in V^h_\rod$ with $L_\bc^\rod[w_h,r_h] = 0$.
Algorithm~\ref{alg:descent_rods} is well defined and produces a sequence
$(y_h^k,b_h^k)_{k=0,1,\dots}$ such that for all $L\ge 0$ we have
\[
I_\rod^{h,\veps}[y_h^L,b_h^L] + \tau \sum_{k=1}^L \big(\|d_t y_h^k\|_\star^2
+ \|d_t b_h^k\|_\dagger^2 \big) \le I_\rod^{h,\veps}[y_h^0,b_h^0].
\]
The iteration controls the unit-length violation via 
\[
\max_{k=0,\dots,L} \||[y_h^k]'|^2-1\|_{L^1_h} + \||b_h^k|^2-1\|_{L^1_h} 
\le \tau c_{\star,\dagger} e_{0,h},
\]
where $e_{0,h} = I_\rod^{h,\veps}[y_h^0,b_h^0]$. 
In particular, the algorithm terminates within a 
finite number of iterations. 
\end{proposition}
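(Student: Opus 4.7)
The plan is to follow the abstract template from Theorem~\ref{thm:abstract_flow} but separately for the two half-steps corresponding to the $y$- and $b$-updates, exploiting the split structure built into Algorithm~\ref{alg:descent_rods}: the bending and penalty contributions are (separately) convex in each variable, while the nonquadratic piece of the torsion term is separately concave. Accordingly, I will treat the convex pieces by the standard identity $a\cdot (a-b) = \tfrac12(|a|^2-|b|^2) + \tfrac12 |a-b|^2$ and the concave piece by the tangent-plane estimate $b\cdot (a-b) \le \tfrac12(|a|^2-|b|^2)$.

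First, I would verify that Steps~(1) and~(2) are uniquely solvable. Writing $y_h^k = y_h^{k-1}+\tau d_t y_h^k$, the identity in Step~(1) becomes a linear equation for $d_t y_h^k \in \cF_h[y_h^{k-1}]$ whose bilinear form is $(v,w)\mapsto (v,w)_\star + \tau \cb(v'',w'') + \tau\veps^{-1}(v'\cdot b_h^{k-1},w'\cdot b_h^{k-1})_h$. This form is symmetric and coercive on $\cF_h[y_h^{k-1}]$ by the postulated norm equivalence $\|w_h'\|_h\le c_\star \|w_h\|_\star$, so Lax--Milgram (or the direct method on the equivalent quadratic minimization) applies. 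Step~(2) is handled identically using $c_\dagger$ instead of $c_\star$.

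Next, I would derive the energy decay in each half-step. Testing Step~(1) with $w_h = d_t y_h^k$, the convexity identity for $(y_h^k{}'',d_ty_h^k{}'')$ and for the penalty term gives
\[
\tau \|d_t y_h^k\|_\star^2 + \tfrac{\cb}{2}\bigl(\|y_h^k{}''\|^2-\|y_h^{k-1}{}''\|^2\bigr) + \tfrac{1}{2\veps}\bigl(\|y_h^k{}'\cdot b_h^{k-1}\|_h^2-\|y_h^{k-1}{}'\cdot b_h^{k-1}\|_h^2\bigr) \le \tau \cdot \text{RHS}.
\]
On the right-hand side, setting $a = Q_h b_h^{k-1}\cdot y_h^k{}''$ and $\tb = Q_h b_h^{k-1}\cdot y_h^{k-1}{}''$, the concavity of $s\mapsto -s^2$ yields $\tb(a-\tb) \le \tfrac12(a^2-\tb^2)$ pointwise, hence the RHS is bounded by $\tfrac{\ct}{2}(\|a\|^2-\|\tb\|^2)$, which is exactly $-1$ times the change of the torsion term in $I_\rod^{h,\veps}$ between $(y_h^{k-1},b_h^{k-1})$ and $(y_h^k,b_h^{k-1})$. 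Rearranging gives $I_\rod^{h,\veps}[y_h^k,b_h^{k-1}] - I_\rod^{h,\veps}[y_h^{k-1},b_h^{k-1}] \le -\tau \|d_t y_h^k\|_\star^2$. An analogous calculation for Step~(2), now with $a = Q_h b_h^k\cdot y_h^k{}''$ and $\tb = Q_h b_h^{k-1}\cdot y_h^k{}''$, produces $I_\rod^{h,\veps}[y_h^k,b_h^k] - I_\rod^{h,\veps}[y_h^k,b_h^{k-1}] \le -\tau \|d_t b_h^k\|_\dagger^2$. Telescoping over $k=1,\dots,L$ gives the claimed energy estimate. Since the reformulation involving the positive semidefinite matrix $P_{b_h}$ shows $I_\rod^{h,\veps}\ge 0$, the series $\sum_k(\|d_t y_h^k\|_\star^2+\|d_t b_h^k\|_\dagger^2)$ is summable and the stopping criterion is reached after finitely many steps.

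For the constraint violation I would argue exactly as in the introductory computation: the nodal tangent conditions $y_h^{k-1}{}'(z)\cdot d_t y_h^k{}'(z)=0$ and $b_h^{k-1}(z)\cdot d_t b_h^k(z)=0$ together with $|y_h^0{}'(z)|=|b_h^0(z)|=1$ give, inductively,
\[
|y_h^k{}'(z)|^2-1 = \tau^2\sum_{\ell=1}^k |d_t y_h^\ell{}'(z)|^2,\qquad |b_h^k(z)|^2-1 = \tau^2\sum_{\ell=1}^k |d_t b_h^\ell(z)|^2.
\]
Summing against the quadrature weights that define $\|\cdot\|_{L^1_h}$ and applying the norm-equivalence hypotheses $\|w_h'\|_h\le c_\star\|w_h\|_\star$ and $\|r_h\|_h\le c_\dagger \|r_h\|_\dagger$ bounds the left-hand sides by $\tau^2 (c_\star^2+c_\dagger^2)\sum_{\ell=1}^k(\|d_t y_h^\ell\|_\star^2+\|d_t b_h^\ell\|_\dagger^2)$, which by the energy estimate is at most $\tau(c_\star^2+c_\dagger^2)e_{0,h}$. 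Setting $c_{\star,\dagger}=c_\star^2+c_\dagger^2$ concludes this step. The main obstacle, in my view, is the careful bookkeeping in the second step above: one must identify the semi-implicit right-hand side of each linear equation with the tangent-plane linearization of the concave torsion contribution and confirm that the ``frozen'' arguments ($b_h^{k-1}$ and $y_h^{k-1}{}''$ in Step~(1); $b_h^{k-1}$ and $y_h^k{}''$ in Step~(2)) are exactly the ones that make the concavity estimate cancel the torsion energy increment without leaving cross-terms, ensuring unconditional decrease of the full functional rather than of one variable at a time.
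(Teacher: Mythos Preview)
Your proof is correct and follows essentially the same strategy as the paper. The only difference is organizational: you carry out the two half-steps separately and use the elementary pointwise inequalities $a(a-b)\ge \tfrac12(a^2-b^2)$ and $b(a-b)\le \tfrac12(a^2-b^2)$, whereas the paper packages the same argument by naming the separately convex functionals $G_h$ and $P_{h,\veps}$ and applying the tangent-plane convexity inequalities $\partial_y G_h[\,\cdot\,;d_ty_h^k]+\partial_b G_h[\,\cdot\,;d_tb_h^k]\le d_t G_h$ and the reversed inequality for $P_{h,\veps}$ in one combined step before summing.
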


\begin{proof}
(a) To prove the stability estimate we note that the functional
\[
G_h[y_h,b_h] =  \frac{\ct}{2} \int_0^L (Q_h b_h \cdot y_h'')^2 \dv{x_1}
\]
is separately convex, i.e., convex in $y_h$ and in $b_h$. Therefore, we have that
\[\begin{split}
\p_y G_h[y_h^{k-1},b_h^{k-1};y_h^k-y_h^{k-1}] + G_h[y_h^{k-1},b_h^{k-1}] & \le G_h[y_h^k,b_h^{k-1}], \\
\p_b G_h[y_h^k,b_h^{k-1};b_h^k-b_h^{k-1}] + G_h[y_h^k,b_h^{k-1}] &\le G_h[y_h^k,b_h^k],
\end{split}\]
which by summation leads to the inequality 
\[
\p_y G_h[y_h^{k-1},b_h^{k-1};d_t y_h^k] + \p_b G_h[y_h^k,b_h^{k-1};d_t b_h^k]
\le d_t G_h[y_h^k,b_h^k].
\]
Similarly, the functional 
\[
P_{h,\veps}[y_h,b_h] = \frac{1}{2\veps} \int_0^L \cI_h^{1,0}[(y_h' \cdot b_h)^2] \dv{x_1}
\]
is separately convex and we have
\[
\p_y P_{h,\veps}[y_h^k,b_h^{k-1};d_t y_h^k] + \p_b P_{h,\veps}[y_h^k,b_h^k;d_t b_h^k]
\ge d_t P_{h,\veps}[y_h^k,b_h^k].
\]
By choosing $w_h = d_t y_h^k$ and $r_h = d_t b_h^k$ in the equations of
Steps~(2) and~(3) of Algorithm~\ref{alg:descent_rods} we thus find that
\[\begin{split}
\|d_t y_h^k\|_\star^2& + \|d_t b_h^k\|_\dagger^2  
+  d_t \big( \frac{\cb}{2} \|[y_h^k]''\|^2 + \frac{\ct}{2} \|[b_h^k]'\|^2 \big) 
+ d_t P_{h,\veps}[y_h^k,b_h^k] \\
&\qquad \qquad\qquad \qquad  + \tau \big(\frac{\cb}{2} \|[d_t y_h^k]''\|^2 + \frac{\ct}{2} \|[d_t b_h^k]'\|^2 \big) \\
& \le \p_y G_h[y_h^{k-1},b_h^{k-1};d_t y_h^k] + \p_b G_h[y_h^k,b_h^{k-1};d_t b_h^k] \le d_t G_h[y_h^k,b_h^k].
\end{split}\]
Since 
\[
I_\rod^{h,\veps}[y_h^k,b_h^k] = \frac{\cb}{2} \|[y_h^k]''\|^2 + \frac{\ct}{2} \|[b_h^k]'\|^2 
- G_h [y_h^k,b_h^k] + P_{h,\veps}[y_h^k,b_h^k] 
\]
we deduce the asserted estimate. \\
(b) The nodal orthogonality conditions encoded in the spaces 
$\cF_h[y_h^{k-1}]$ and $\cE_h[b_h^{k-1}]$ lead to the relations
\[\begin{split}
|[y_h^k]'(z)|^2 & = |[y_h^{k-1}]'(z)|^2 + \tau^2 |[d_t y_h^k]'(z)|^2,  \\
|b_h^k(z)|^2 &= |b_h^{k-1}(z)|^2 + \tau^2 |d_t b_h^k(z)|^2.
\end{split}\]
By induction and incorporation of the stability estimate we deduce the asserted
estimates for the constraint violation. 
\end{proof}

We illustrate the performance of Algorithm~\ref{alg:descent_rods} via a 
numerical experiment showing the relaxation of a twisted initially flat 
curve which is clamped at both ends. 
We plotted in the bottom of Figure~\ref{fig:twist_exp} the total
energy and the torsion contribution defined by 
\[
T_\tor^h[y_h,b_h] = \frac{\ct}{2} \int_0^L |b_h'|^2 \dv{x_1}
- \frac{\ct}{2} \int_0^L (Q_h b_h \cdot y_h'')^2 \dv{x_1}.
\]
We observe that the curve quickly releases its large energy and becomes
a spatial curve attaining a stationary configuration with equilibrated
curvature after approximately~2000 iterations. The model parameters 
used in the simulation were set to $\cb = 2$ and $\ct = 1$. We used
a partition into~1006 subintervals corresponding to a mesh-size
$h=1/80$. The step-size $\tau$ and the penalty parameter $\veps$ were
chosen proportional to $h$. 

\newcommand{\bild}[1]{\fbox{\includegraphics[scale=.16,trim=120 80 95 50,clip]{twist_k_#1.jpg}}\makebox[0ex][r]{\tiny$k=#1$\ }\,\ignorespaces}
\begin{figure}[p]
\bild{0} \bild{40} \bild{80} \\
\bild{320}  \bild{1520} \bild{1680} \\
\bild{1840} \bild{2000} \bild{2160}
\vspace*{5mm}
\includegraphics[scale=.58]{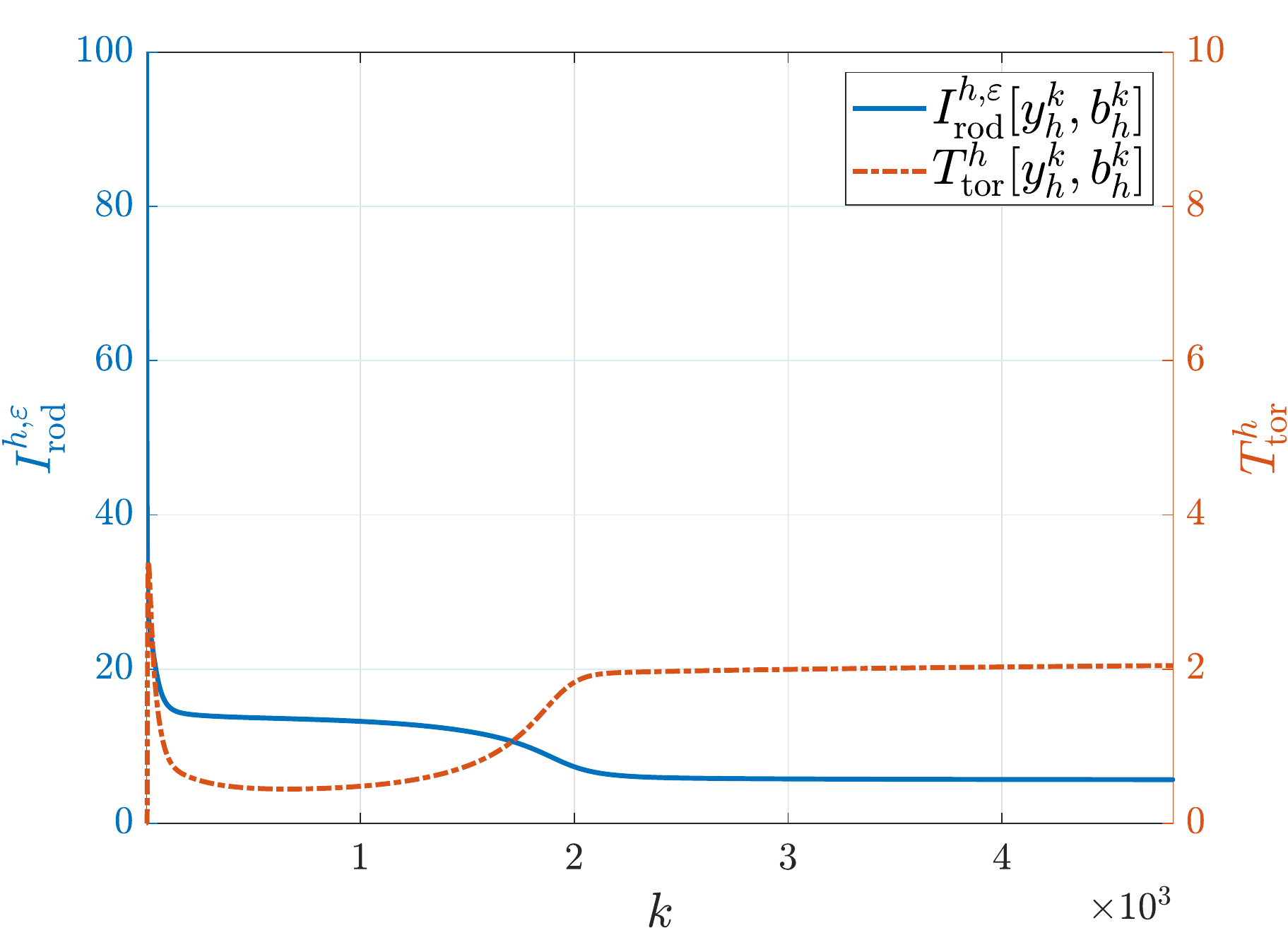}
\caption{\label{fig:twist_exp}
Snapshots of an evolution from an initially flat but twisted curve 
colored by curvature 
after different numbers of iterations (top): the curve equilibrates its
curvature to relax the initially dominant bending energy, afterwards
deforms into a spatial helix, and finally attains a large stationary 
configuration. The total energy decreases monotonically while the
contribution due to torsion increases (bottom).}
\end{figure}

\subsection{Elastic plates}\label{sec:iter_plates}
We next apply the conceptual approach for solving nonlinearly constrained
minimization problems outlined above to the case of approximating bending
isometries. For this we recall that the discrete bending energy is given by 
\[
I_\plate^h [y_h] =\frac{\cb}{2} \int_\o |D_h^2 y_h|^2 \dv{x'}
\]
with the set of admissible discrete deformations defined as
\[\begin{split}
\cA_h = \big\{  y_h\in \cS^\dkt(\cT_h)^3: \, L_\bc^\plate & [y_h] = \ell_\bc^\plate, \\
& [\nabla y_h(z)]^\transp \nabla y_h(z) = \Id_2 \text{ f.a. } z\in \cN_h \big\}.
\end{split}\]
Using the linearized isometry operator
\[
L_{A}^\iso[B] = A^\transp B + B^\transp A 
\]
we define the (shifted) tangent space of the set of discrete isometric deformations $\cA_h$
at a deformation $y_h$ via
\[\begin{split}
\cF_h[y_h] = \big\{ w_h\in \cS^\dkt(\cT_h)^3:   \, & L_\bc^\plate[w_h] = 0, \\
& L^\iso_{\nabla y_h}[\nabla w_h](z) = 0 \text{ f.a. }z\in \cN_h  \big\}.
\end{split}\]
We then decrease the bending energy for given boundary
conditions by iterating the steps of the following algorithm.

\begin{algorithm}[Gradient descent for elastic plates]\label{alg:plates_descent}
Choose an initial $y_h^0 \in \cA_h$ and a step size $\tau>0$, set $k=1$. \\
(1) Compute $d_t y_h^k \in \cF_h[y_h^{k-1}]$ such that
for all $w_h \in \cF_h[y_h^{k-1}]$ we have 
\[
(d_t y_h^k, w_h)_\star + (D_h^2 y_h^k, D_h^2 w_h) = 0.
\]
(2) Stop if $\|d_t y_h^k\|_\star \le \veps_\stop$; otherwise, increase $k\to k+1$
and continue with~(1). 
\end{algorithm}

The iterates $(y_h^k)_{k=0,1,\dots}$ will in general not satisfy the nodal isometry
constraint exactly, but the violation is again independent of the number of iterations
and controlled by the step size~$\tau$.

\begin{proposition}[Convergent iteration]\label{thm:iteration}
The iterates $(y_h^k)_{k=0,1,\dots}$ of Algorithm~\ref{alg:plates_descent}
are well defined and satisfy for every $L\ge 0$ the energy estimate
\[
I_\plate^h [y_h^L] + \tau \sum_{k=1}^L  \|d_t y_h^k\|_\star^2
\le I_\plate^h [y_h^0].
\]
Moreover, if $\|\nabla w_h\|_h \le c_\star \|w_h\|_\star$ for all
$w_h\in \cS^\dkt(\cT_h)^3$ with $L_\bc^\plate[w_h]= 0$, then
we have the constraint violation bound
\[
\max_{k=0,\dots,L} \|(\nabla y_h^k)^\transp \nabla y_h^k - \Id_2 \|_{L^1_h} \le 
c \tau e_{0,h},
\]
where $e_{0,h} = I_\plate^h [y_h^0]$.
\end{proposition}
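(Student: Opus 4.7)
The plan is to apply the blueprint of Theorem~\ref{thm:abstract_flow} to the quadratic bending energy and the pointwise isometry constraint, using the specific structure of the finite element spaces. The statement has three ingredients: well-posedness of the time-step, monotone energy decay, and the $\cO(\tau)$ constraint violation bound. I would address them in that order.

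First, I would rewrite Step~(1) with $d_t y_h^k$ as the genuine unknown via the substitution $y_h^k = y_h^{k-1}+\tau d_t y_h^k$. The resulting equation is a linear, symmetric, positive definite system on the finite-dimensional affine space $\cF_h[y_h^{k-1}]$: the bilinear form $(u,w)\mapsto (u,w)_\star + \tau (D_h^2 u,D_h^2 w)$ is coercive on $\cS^\dkt(\cT_h)^3$ (with homogeneous boundary conditions), so Lax--Milgram on the closed linear subspace $\cF_h[y_h^{k-1}]$ yields existence and uniqueness of $d_t y_h^k$, hence of $y_h^k$.

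For the energy estimate I would choose $w_h = d_t y_h^k$, which is admissible because $d_t y_h^k\in \cF_h[y_h^{k-1}]$ by construction. The discrete differentiation identity
\[
(D_h^2 y_h^k, D_h^2 d_t y_h^k) = \frac12 d_t \|D_h^2 y_h^k\|^2 + \frac{\tau}{2}\|D_h^2 d_t y_h^k\|^2
\]
together with the equation gives $\|d_t y_h^k\|_\star^2 + \frac12 d_t \|D_h^2 y_h^k\|^2 \le 0$. Multiplication by $\tau$, summation over $k=1,\dots,L$ and telescoping, and multiplication by $\cb$ yields the claimed energy bound.

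For the constraint violation, the key computation is a pointwise Taylor expansion at each $z\in\cN_h$. Writing $y_h^k = y_h^{k-1}+\tau d_t y_h^k$ and expanding,
\[
(\nabla y_h^k)^\transp \nabla y_h^k - I_2 = (\nabla y_h^{k-1})^\transp \nabla y_h^{k-1}-I_2 + \tau L^\iso_{\nabla y_h^{k-1}}[\nabla d_t y_h^k] + \tau^2 (\nabla d_t y_h^k)^\transp \nabla d_t y_h^k.
\]
The decisive step is that the linear term vanishes at every $z\in\cN_h$ precisely because $d_t y_h^k\in \cF_h[y_h^{k-1}]$ encodes the linearized isometry condition at nodes. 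By induction, starting from $y_h^0\in\cA_h$, only the quadratic contributions survive at nodes, and
\[
\bigl(\nabla y_h^k(z)\bigr)^\transp \nabla y_h^k(z) - I_2 = \tau^2 \sum_{\ell=1}^k \bigl(\nabla d_t y_h^\ell(z)\bigr)^\transp \nabla d_t y_h^\ell(z).
\]
Taking the $L^1_h$ norm, applying the triangle inequality, the assumed bound $\|\nabla w_h\|_h \le c_\star \|w_h\|_\star$, and the energy estimate $\tau\sum_\ell \|d_t y_h^\ell\|_\star^2 \le e_{0,h}/\cb$ delivers the bound $\le c\tau e_{0,h}$.

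The only delicate point is the Taylor identity and, in particular, the observation that the spaces $\cF_h[y_h^{k-1}]$ are designed so that the linear term vanishes \emph{exactly} at the nodes; everything else is an instantiation of the abstract framework. Finite termination follows as in Theorem~\ref{thm:abstract_flow} from $\|d_t y_h^k\|_\star \to 0$ implied by the energy sum being bounded.
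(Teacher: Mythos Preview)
Your proposal is correct and follows essentially the same approach as the paper's proof: Lax--Milgram for well-posedness on the linear subspace $\cF_h[y_h^{k-1}]$, the binomial identity for the energy decay after testing with $w_h=d_t y_h^k$, and the nodal Taylor expansion with vanishing linearized-isometry term for the constraint violation bound. You have in fact spelled out part~(c) in more detail than the paper, which simply refers back to the abstract argument of Theorem~\ref{thm:abstract_flow}.
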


\begin{proof}[Proof (sketched)]
(a) Since for any $y_h^{k-1}\in \cS^\dkt(\cT_h)^3$ we have that $\cF_h[y_h^{k-1}]$
is a nonempty linear space the Lax-Milgram lemma implies that
there exists a unique solution $y_h^k\in \cF_h[y_h^{k-1}]$. \\
(b) The energy decay property is an immediate consequence of choosing
$w_h =d_t y_h^k$ and using the binomial formula
\[
2 (D_h^2 y_h^k,D_h^2 d_t y_h^k) = d_t \|D_h^2y_h^k\|^2 + \tau \|D_h^2 d_t y_h^k\|^2. 
\]
(c) The error bound for the isometry violation follows from the orthogonality
defined by the linearized isometry condition and the energy decay property
as in the proof of Proposition~\ref{thm:abstract_flow}.
\end{proof}

Figure~\ref{fig:moebius} illustrates the discrete 
evolution defined by Algorithm~\ref{alg:plates_descent} via snapshots of 
different iterates. The clamped boundary conditions imposed at the ends 
$\g_\DD = \{0,L\}\times [0,w]$ of the strip $\o = (0,L)\times (0,w)$ 
with length $L=10$ and width $w=1$
are defined via the operator
\[
L_\bc^\plate[y] = \big[y|_\gD, \nabla y|_\gD\big]
\]
and functions $y_\DD \in L^2(\gD;\R^3)$ and $\hy_\DD \in L^2(\gD;\R^{3\times 2})$.
These functions are constructed in such a way that the segment $\{L\}\times [0,w]$
is rotated and mapped onto the fixed opposite segment $\{0\}\times [0,w]$. In this
way the formation of a M\"obius strip is enforced, we included a small linear forcing
term to avoid certain nonuniqueness effects. We observe that the nonsmooth
choice of the starting value with large bending energy does not influence 
the robustness of the iteration
and that within less than $10.000$ iterations a stationary configuration for 
the stopping parameter $\veps_\stop = 5 \cdot 10^{-3}$ and evolution metric with $\|\cdot\|_\star
= \|D_h^2 \cdot\|$ is 
attained. We also observe that we obtain a satisfactory stationary shape for coarse triangulations
and that the energy decreases monotonically as predicted, with a small violation
of the isometry constraint indicated by the quantity
\[
\d_{\rm iso}^\infty[y_h^k] = \|(\nabla y_h^k)^\transp \nabla y_h^k - I_2 \|_{L^\infty_h}
\]
which appears to be nearly independent of the iteration. We finally remark that
we observe concentrations of curvature at boundary points corresponding to
certain singularities discussed in~\cite{BarHor15}. 

\renewcommand{\bild}[1]{\fbox{\includegraphics[scale=.095,clip]{moebius-#1.png}}\makebox[0ex][r]{\tiny$k=#1$\ }\,\ignorespaces}
\begin{figure}[p]
\bild{0} \bild{45} \bild{120} \\ 
\bild{600} \bild{2500} \bild{5000}
\vspace*{4mm}
\fbox{\includegraphics[scale=.119,trim=100 -40 95 0,clip]{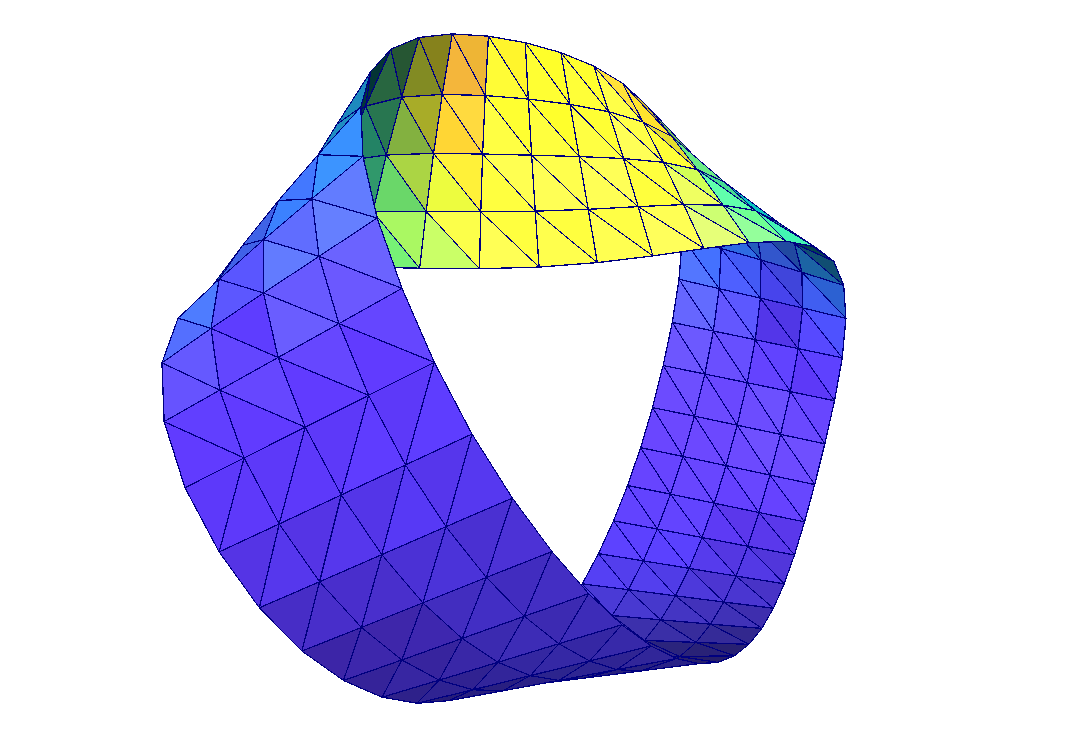}}\makebox[-14ex][r]{\tiny $\#\cT_h=320$\ }\makebox[14ex][r]{\tiny $k=1073$\ }\,\ignorespaces
\fbox{\includegraphics[scale=.11985,trim=130 -35 95 0,clip]{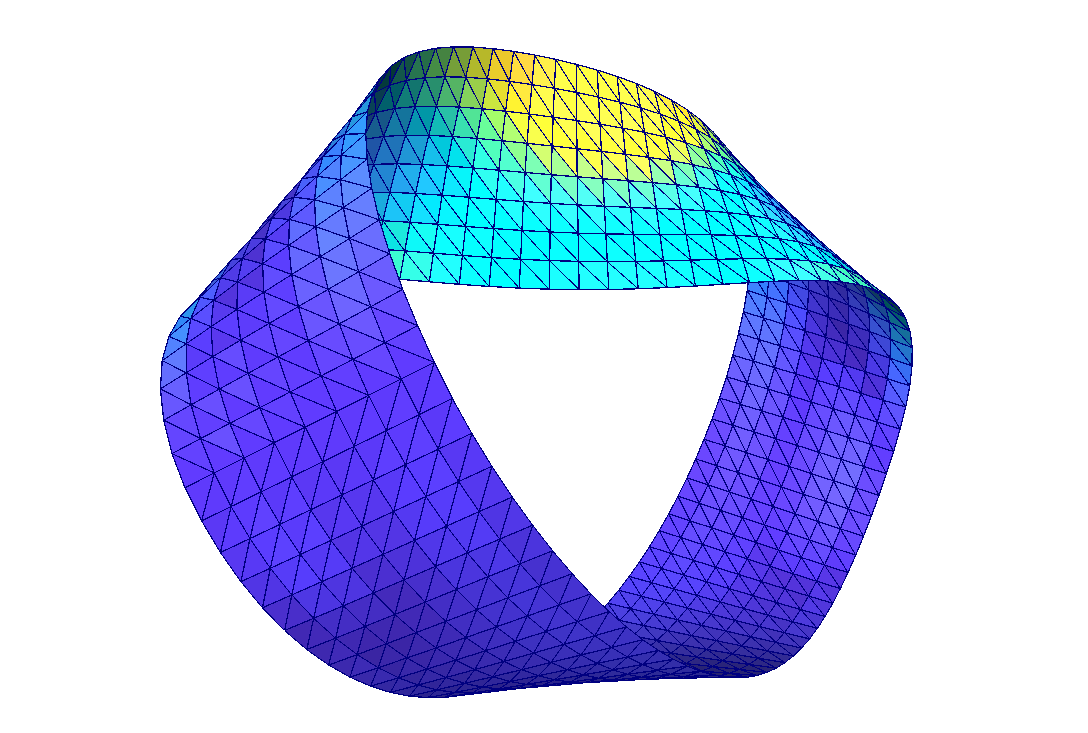}}\makebox[-13ex][r]{\tiny $\#\cT_h=1280$\ }\makebox[13ex][r]{\tiny $k=6606$\ }\,\ignorespaces
\fbox{\includegraphics[scale=.119,trim=100 -40 95 0,clip]{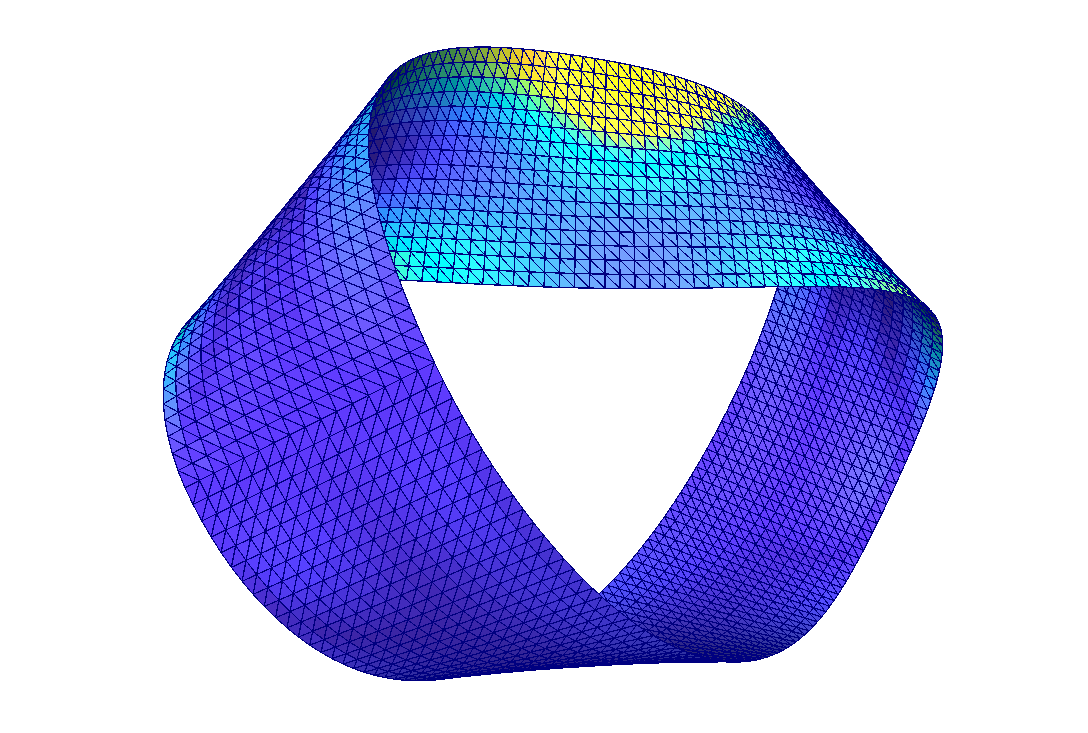}}\makebox[-13ex][r]{\tiny $\#\cT_h=5120$\ }\makebox[13ex][r]{\tiny $k=33276$\ }\,\ignorespaces
\vspace*{4mm}
\includegraphics[scale=.75]{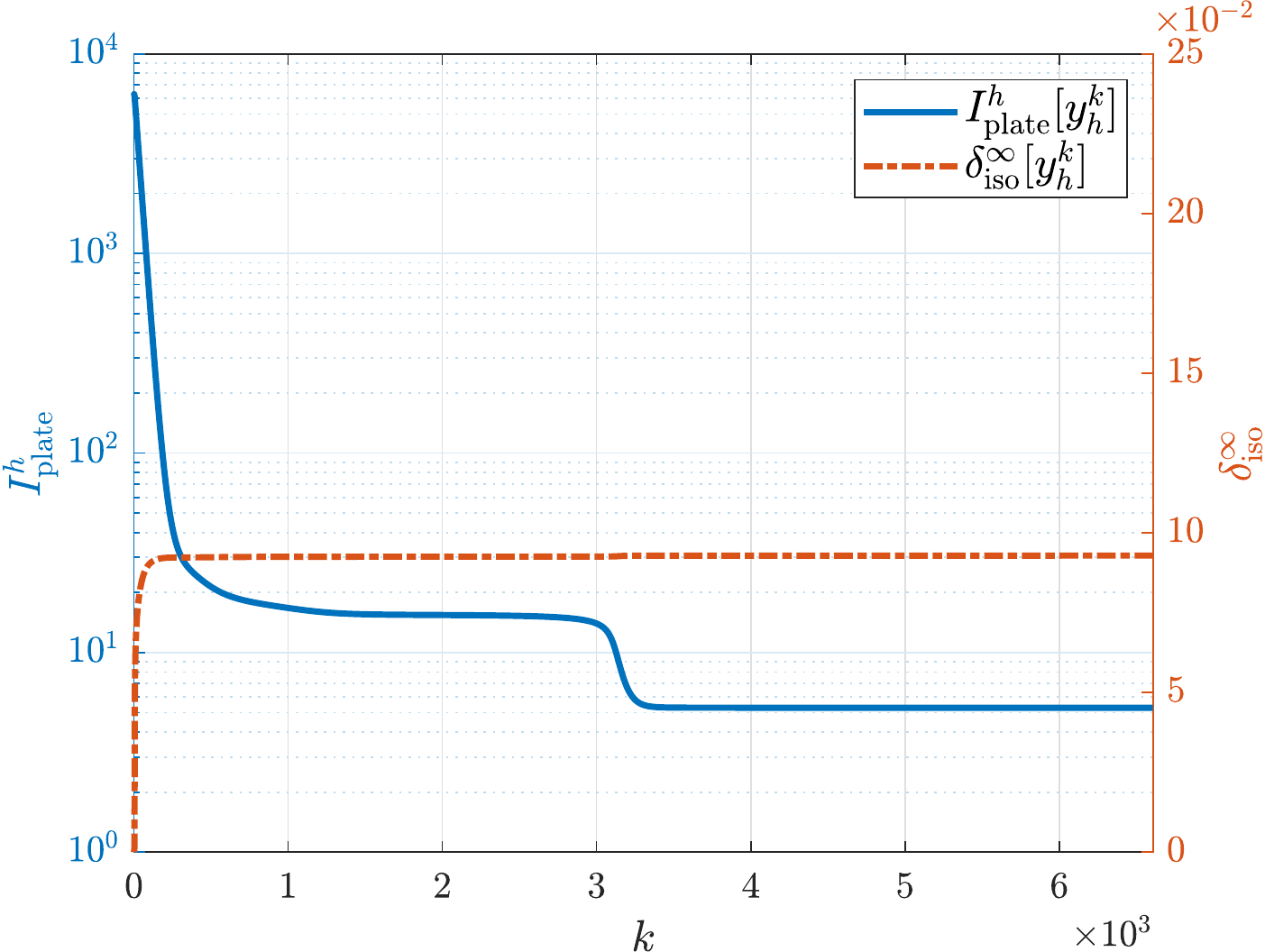}
\caption{\label{fig:moebius}
Snapshots of the iteration to minimize bending energy of an elastic strip with boundary 
conditions leading to the formation of a M\"obius strip colored by its mean curvature (top); stationary
configurations for different triangulations (middle); energy decay
and constraint violation 
throughout the iteration (bottom).}
\end{figure}

\section{Linear finite element systems with nodal constraints}\label{sec:saddle_solve}
The discrete gradient flows devised in the previous sections lead to
linear systems of equations in the time steps that have a special saddle
point structure. In particular, they are given in the form
\begin{equation}\label{eq:saddle_system}
\tag{${\rm S}$} 
\begin{bmatrix} 
A & B^\transp \\ B & 0 
\end{bmatrix}
\begin{bmatrix}
x \\ \lambda 
\end{bmatrix}
= 
\begin{bmatrix}
f \\ 0
\end{bmatrix}
\end{equation}
with a fixed positive definite symmetric matrix $A\in \R^{n\times n}$ and a matrix
$B \in \R^{p\times n}$ that changes in the iteration steps. The matrix $B$ is
of full rank and block diagonal, i.e.,
\[
B = 
\begin{bmatrix}
b_1^\transp   &  &  &   \\
   & b_2^\transp &  &   \\
   &       & \ddots &   \\
   &  &    &        & b_p^\transp
\end{bmatrix}
\]
with vectors $b_i \in \R^\ell\setminus\{0\}$ for $i=1,2,\dots,p$ and $n = p\ell$.
We note that the solution $x\in \R^n$ of the linear system of equations above is 
equivalently characterized by the system 
\[
z^\transp A x = z^\transp f
\]
for all $z\in \R^n$ belonging to the kernel of $B$, i.e., subject to the conditions
\[
x, z\in \ker B.
\]
To obtain a simpler, unconstrained system of equations we choose for each $i=1,2,\dots,p$ 
a set of orthonormal vectors
$(c_2^i,c_3^i,\dots,c_\ell^i) \subset \R^\ell$ that are orthogonal to $b_i$,
i.e., we have
\[
\{b_i\}^\perp = \hull\big\{c_2^i,c_3^i,\dots,c_\ell^i\big\}.
\]
We may then represent the kernel of $B$ by the image of the matrix 
$C\in \R^{p \ell \times p (\ell-1)} $ defined by 
\[
C = 
\begin{bmatrix}
c_2^1 \ \dots \ c_\ell^1  & & & \\
& c_2^2 \ \dots \ c_\ell^2  & & \\
& & \ddots \ \ddots \ \ddots &     & \\
& & & c_2^p \ \dots \ c_\ell^p  
\end{bmatrix}.
\]
The matrix $C$ defines an isomorphism $C:\R^{p (\ell-1)} \to \ker B$ 
and we may thus reformulate the linear system of equations as
\begin{equation}\label{eq:reduced_system}
\tag{${\rm R}$} 
C^\transp A C \hx = C^\transp f.
\end{equation}
With the solution $\hx$ we obtain 
the solution $x$ of the orginal system~\eqref{eq:saddle_system} via $x=C \hx$.
Since the columns of $C$ are linearly independent the matrix $C$ has full rank. 
Hence, the symmetric matrix $\hA = C^\transp A C$ is positive definite 
and the linear system of equations $\hA \hx = \hf$ can be solved efficiently with
a preconditioned conjugate gradient scheme. The construction of suitable
preconditioners has been discussed in~\cite{NasSof96} and the main issue in 
their justification is to understand how accurate the approximation 
\[
(C^\transp A C)^{-1} \approx C^\transp A^{-1} C 
\]
is. Straightforward manipulations show that the solution $x$ of the
saddle-point system~\eqref{eq:saddle_system} satisfies
\[
x = A^{-1} f - A^{-1}B^\transp (B A^{-1} B^\transp)^{-1} B A^{-1} f.
\]
With the equvialent formulation $(C^\transp A C)\hx = C^\transp f$ and
$x= C\hx$ we find that
\[
x = C (C^\transp A C)^{-1} C^\transp f.
\]
Since $C$ is orthogonal in the sense that $C^\transp C = I_{p(\ell-1)}$ it follows that
\[
(C^\transp A C)^{-1} = C^\transp A^{-1} C 
- C^\transp (A^{-1}B^\transp B A^{-1} B^\transp)^{-1} B A^{-1} C.
\]
This justifies the approximation $(C^\transp A C)^{-1} \approx C^\transp A^{-1} C$
if the second term on the right-hand side is small. 
Another relation is obtained by formally applying the Neumann series
\[
T^{-1} = \sum_{j=0}^\infty (I-T)^j
\]
to the product $T = \vrho (C^\transp A C)(C^\transp A^{-1} C)$ with a suitable
parameter $0<\vrho \le 1$ to deduce that 
\[
(C^\transp A C)^{-1} = \vrho (C^\transp A^{-1} C) 
\big(I + (I-T) + (I-T)^2 + \dots\big).
\]
Accepting the approximation $(C^\transp A C)^{-1} \approx C^\transp A^{-1} C$
the next step is to choose a preconditioner $P$ for $A$, i.e., $P$ approximates
$A^{-1}$ and the multiplication by $P$ is inexpensive, and
use the matrix $\hP = C^\transp P C$ as a preconditioner 
for $\hA$. Noting that $C$ is orthogonal this is justified by the
spectral norm estimate from~\cite{NasSof96},
\[
\|C^\transp A^{-1} C - C^\transp P C \| \le \|A^{-1}-P\|.
\]
A further discussion of related preconditioners in the context of 
micromagnetics can be found in~\cite{KPPRS18-pre}.
We illustrate the construction of the bases of the null space and the
performance of different solution strategies in the context of harmonic
maps into spheres. 

\subsection{Application to harmonic maps}
Harmonic maps are vector fields with values in a given manifold which are 
stationary for the Dirichlet energy. In the case of the unit sphere,
the vector field satisfies a pointwise unit-length constraint. Already this simple case
illustrates analytical difficulties when dealing with constrained partial 
differential equations. In particular, harmonic maps are nonunique even for
fixed boundary values and may be discontinuous everywhere, cf.~\cite{Rivi95}. 
Partial regularity results can be proved if a harmonic map is energy 
minimizing, cf.~\cite{SchUhl82}. These observations underline the importance of
computing harmonic maps with low energy. We aim at applying the concepts
of the previous sections to the approximation of harmonic maps and consider
the following model problem defined with a function $u_\DD\in C(\p\O;\R^d)$ with
$|u_\DD(x)|=1$ for all $x\in \p\O$ which is the trace of a function 
$\tu_\DD \in H^1(\O;\R^d)$:
\[
\tag{${\rm P}_\hm$} \left\{ \, 
\begin{array}{l}
\text{Minimize} \quad \displaystyle{\frac12 \int_\O |\nabla u|^2\dv{x}} \quad \text{in the set}\\[2.5mm]
 \cA = \big\{u\in H^1(\O;\R^d): |u(x)|^2 =1 \text{ f.a.e. } x\in \O, \, u|_\pO = u_\DD\big\}.
\end{array}\right.
\]
Following~\cite{Alou97,Bart05} we discretize 
the admissible set $\cA$ by piecewise affine vector fields 
and impose the unit-length constraint in the nodes of the underlying triangulation.
This leads to the following discrete minimization problem:
\[
\tag{${\rm P}_\hm^h$} \left\{  
\begin{array}{l}
\text{Minimize} \quad \displaystyle{\frac12 \int_\O |\nabla u_h|^2\dv{x}} \quad \text{in the set} \\[2.5mm]
\cA_h = \big\{u_h\in \cS^{1,0}(\cT_h)^d: |u_h(z)|^2 =1 \text{ f.a.}\, z\in \cN_h, u_h|_\pO = u_{\DD,h}\big\}.
\end{array}\right.
\]
For a function $u_h\in \cS^{1,0}(\cT_h)^d$ with nonvanishing nodal values we define
\[
\cF_h[u_h] = \big\{w_h \in \cS^{1,0}(\cT_h)^d: \, 
w_h(z)\cdot u_h(z) = 0 \text{ f.a. }z\in \cN_h, \ w_h|_{\p\O} = 0 \big\}.
\]
With a gradient flow approach and a linearized treatment of the nodal constraint
we are led to the following algorithm.

\begin{algorithm}[Gradient descent for harmonic maps]\label{alg:hms}
Choose $u_h^0\in \cA_h$ and $\tau>0$, set $k=1$. \\
(1) Compute $d_t u_h^k \in \cF_h[u_h^{k-1}]$ such that for all $w_h\in \cF_h[u_h^{k-1}]$
we have 
\[
(d_t u_h^k,w_h)_\star + (\nabla u_h^k,\nabla w_h) = 0.
\]
(2) Stop if $\|d_t u_h^k\|_\star \le \veps_{\rm stop}$; otherwise, increase 
$k\to k+1$ and continue with~(1).
\end{algorithm}

The Lax--Milgram lemma shows that the iteration is well defined. Because of the 
orthogonality condition we have that 
\[\begin{split}
|u_h^k(z)|^2 = |u_h^{k-1}(z) + \tau d_t u_h^k(z)|^2 
\ge |u_h^{k-1}(z)|^2
\end{split}\]
which by induction gives $|u_h^k(z)| \ge \dots \ge |u_h^0(z)|=1$ for $k=0,1,\dots,K$. 
If the restriction of the inner product $(\cdot,\cdot)_\star$ to $\cS^{1,0}(\cT_h)^d$ is
represented by the matrix $M$ then the linear systems in the iteration can be written as
\[
\begin{bmatrix} 
M+\tau S & B^\transp \\
B & 0 
\end{bmatrix}
\begin{bmatrix} 
V^k \\ \Lambda
\end{bmatrix}
=
\begin{bmatrix} 
- S U^{k-1} \\ 0
\end{bmatrix}
\]
with the vectorial finite element element stiffness matrix $S$ and the
constraint matrix 
\[
B = 
\begin{bmatrix}
u_h^{k-1}(z_1)^\transp &             &    &  \\
               & u_h^{k-1}(z_2)^\transp &    &  \\
               &                       & \ddots &  \\
               &                         &      & u_h^{k-1}(z_p)^\transp 
\end{bmatrix}.
\]
To define the matrix $C$ that provides an isomorphism from $\R^{p(d-1)}$ onto the 
kernel of $B$ we need to compute for a given vector $b\in \R^d\setminus \{0\}$
an orthonormal basis $(c_2,\dots,c_d)$ of its orthogonal complement. We proceed as follows:
if $b$ is parallel to $e_1$ then we choose the remaining canonical basis vectors 
$(e_2,\dots,e_d)$; otherwise, we set 
\[
(\tc_2,\dots,\tc_d) = \begin{cases}
b^\perp & \text{if } d=2, \\
\big(b\times e_1,b\times (b\times e_1)\big) & \text{if } d=3,
\end{cases}
\]
where $b^\perp$ denotes the rotation of $b$ by $\pi/2$, followed by the
normalization $c_j = \tc_j/|\tc_j|$ for $j=2,\dots,d$.

\begin{example}
Let $\O = (-1/2,1/2)^3$ and define $u_\DD$ on $\GD = \p\O$ for $x\in \p\O$ via
\[
u_\DD(x) = \frac{x}{|x|}.
\]
The employed triangulations $\cT_\ell$ result from $\ell$ uniform 
refinements of a reference triangulation of $\O$ into six tetrahedra. The 
starting value $u_h^0 \in \cS^{1,0}(\cT_\ell)^3$ is defined by generating
random nodal values of unit length. 
\end{example}

Table~\ref{tab:exp_hms_iter} compares the following general strategies
to solve the linear problems in the iterative solution of harmonic maps:
\begin{itemize}
\item[(1)] Solve the original indefinite saddle-point system~\eqref{eq:saddle_system}
with a direct solution method for sparse systems. 
\item[(2)] Solve the reduced positive definite system~\eqref{eq:reduced_system} with a 
direct solution method for sparse linear systems. 
\item[(3)] Solve the reduced positive definite system~\eqref{eq:reduced_system} with the 
preconditioned conjugate 
gradient method using a diagonal preconditioning of the full system matrix $C^\transp A C$.
\item[(4)] Solve the reduced system~\eqref{eq:reduced_system} with the preconditioned conjugate 
gradient method using the preconditioner 
$C^\transp (L_\ic L_\ic^\transp)^{-1} C$ with the incomplete 
Cholesky factorization $A \approx L_\ic L_\ic^\transp$ computed once. 
\end{itemize}
We measured the average time needed for different discretizations to
solve the linear systems of equations using strategies~(1)-(4). We always 
chose the step 
size $\tau = h$, the stopping parameter $\veps_{\rm pcg} = 10^{-8}$ 
for the relative residuals in the preconditioned conjugate gradient
scheme, and the stopping criterion $\veps_{\rm stop} = h_\ell/10$ in
Algorithm~\ref{alg:hms}. We employed the $H^1$ inner product to define
 $(\cdot,\cdot)_\star$ and used {\sc Matlab}'s backslash operator as a model 
direct solver for sparse linear systems. From 
the obtained numbers we see that the preconditioner particularly
designed for the structure of the constrained problems with changing 
constraints clearly outperforms all other approaches. However, it
does not lead to mesh-size independent iteration numbers for this example
with a nonsmooth solution. We also remark that
we did not observe a further improvement when additional terms in the
Neumann series were used with a damping factor $\vrho = 1$. In a two-dimensional
setting with a smooth solution the complete Cholesky factorization led 
to mesh-independent iteration numbers. 

\sisetup{round-precision=1,round-mode=places,scientific-notation=true}
\begin{table}
{\small \begin{tabular}{|r|c|c|c|c|} \hline
\mbox{} & saddle, direct & reduced, direct & reduced, pcg                  & reduced, pcg    \\
$\#\cN_\ell$  & (backslash)    & (backslash)     & (diagonal $C^\transp A C$) & (incompl. Chol.) \\\hline\hline  
$125$    &  \num{0.0007388} & \num{0.0003246} & \num{0.0000835} \hspace*{1.1mm}(11.9) & \num{0.0000300} \hspace*{1.1mm}(6.0)  \\\hline
$729$    &  \num{0.0150466} & \num{0.0064836} & \num{0.0001906} \hspace*{1.1mm}(25.4)  & \num{0.0000743} (10.2) \\\hline
$4913$   &  \num{0.5961178} & \num{0.2436273} & \num{0.0009204} \hspace*{1.1mm} (52.5) & \num{0.0003884} (19.9) \\\hline 
$35937$  &  \num{34.472256} & \num{8.2637542} & \num{0.0081063} (105.9)                & \num{0.0032082} (38.8) \\\hline
$274625$ &    --  & --                        & \num{0.1094539} (212.1)                & \num{0.0585564} (76.1) \\\hline
\end{tabular}}  \vspace*{2mm}
\caption{\label{tab:exp_hms_iter} Average runtimes in seconds and average iteration 
numbers of the preconditioned conjugate gradient scheme (pcg) per iteration in parentheses 
for solution strategies~(1)-(4) for the constrained linear systems of equations
arising in the iterative approximation of harmonic maps on triangulations 
$\cT_\ell$ of the unit cube with mesh-sizes $h_\ell \approx 2^{-\ell}$, 
$\ell =2,3,\dots,6$.}
\end{table}

\section{Applications, modifications, and extensions}\label{sec:extensions}
We address in this section the application of the developed methods to the
simulation of extended problems. In particular, we devise a numerical method
for simulating bilayer plates, discuss how injectivity
can be enforced in the case of elastic rods, and investigate a model that involves the
thickness parameter and thereby allows for describing situations in which low energy
membrane and bending effects occur simultaneously. 

\subsection{Bilayer plates}
An important class of applications of nonlinear plate bending is related 
to composite materials, i.e., structures such as bilayer plates which are made of two
sheets with slightly different mechanical features that are glued on top of each other.
The difference in the material properties, e.g., concerning the response to 
environmental changes such as temperature, allows for externally controlled large 
bending effects. In bimetal strips one of the metals contracts while the other
one expands leading in combination to the formation of rolls with prescribed curvature. 

\begin{figure}[h]
\input{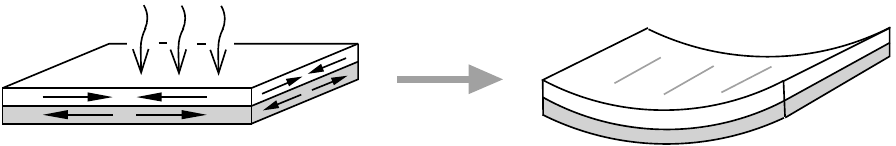_t}
\caption{\label{fig:bilayer_sketch} Schematical description of the bilayer
bending effect: the upper sheet contracts while the lower one expands leading
in combination to a large isometric deformation.}
\end{figure}

The preferred curvature is defined by the difference in elastic properties of
the involed materials and is explicitly visible in the dimensionally reduced model
identified in~\cite{Schm07a}. Following that work we consider a thin plate 
$\O_\d = \o \times (-\d/2,\d/2)$ and the 
inhomogeneous energy density 
\[
W_\bil(x_3,F) = \begin{cases}
W \big((1+\d \a)^{-1} F\big) & \text{ for }x_3>0, \\
W \big((1-\d \a)^{-1} F\big) & \text{ for }x_3<0,
\end{cases}
\]
with a material parameter $\a\in \R$. 
The energy density $W$ is assumed to satisfy the standard requirements stated
in Section~\ref{sec:dim_red} so that $W_\bil$  
is minimal for deformation gradients $F$ which are multiples of rotations
with $\det F = (1+\d \a)^3>1 $ if $x_3>0$
and $\det F = (1-\d \a)^3 <1$ for $x_3<0$ corresponding to expansive and
contractive behavior in the upper and lower sheets, respectively. 
It has been shown in~\cite{Schm07a}
that for deformations with energy proportional to $\d^3$
a limiting, dimensionally reduced problem is given by the following 
minimization problem:
\[
\tag{${\rm P}_\bil$} 
\left\{ \, 
\begin{array}{l}
\text{Minimize} \quad \displaystyle{I_\bil[y] = \int_\o Q_\plate(II-\a I_2) \dv{x'}} \quad \text{in the set}\\[2.5mm]
\cA = \big\{y\in H^2(\o;\R^3): (\nabla y)^\transp \nabla y = I_2, \, L_\bc^\bil[y] = \ell_\bc^\bil \big\}.
\end{array}\right.
\]
Here, $Q_\plate$ is the reduced quadratic form introduced in Section~\ref{sec:dim_red_plates}
which is applied
to the difference of the second fundamental form $II$ and a multiple of the 
identity matrix with a factor given by half of the jump in the material 
difference divided by $\d$.
This material mismatch introduces a term that is similar to so-called 
spontaneous curvature terms in the context of biomembranes described via 
the Helfrich-Willmore model. 
Note that in the definition of $W_\bil$ the difference is $2\d \a$ and
its proportionality to the thickness $\d$ of the plates is important 
to avoid delamination effects. The reduced model prefers deformations $y$ 
that define surfaces with mean curvature $\a$ 
and vanishing Gaussian curvature due to the isometry constraint. 
Hence, rolling effects in one direction occur. In~\cite{Schm07b} global 
minimizers for $I_\bil$ have been shown to be given by cylinders with radius
$r=1/\a$. Recalling that the quadratic form $Q_\plate$ is given by the scalar 
product
\[
\langle M, N \rangle_\plate = c_1 M:N  + c_2 \trace(M) \trace(N),
\]
we do not have the property that the integrand is proportional to $|D^2y|^2$ as in the
case of a single layer or two identical materials 
corresponding to the case $\a=0$. Instead we infer
\[
Q_\plate(II-\a I_2) = Q_\plate(II) -2 \a \langle II, I_2 \rangle_\plate + \a^2 Q_\plate(I_2).
\]
Because of the isometry condition the first term is proportional to $|D^2y|^2$ 
while the third term is a constant $c_\a$ which is irrelevant for the minimization 
process. For the second term on the right-hand side we have 
\[
-2 \a \langle II, I_2 \rangle_\plate = -2\a (c_1 + 2 c_2) \trace(II).
\]
Therefore, using $II_{ij} = -\p_i \p_j y \cdot (\p_1 y\times \p_2 y)$ we have 
\[
Q_\plate(II-\a I_2) = \cb |D^2 y|^2 + 2\a \cspont \Delta y \cdot (\p_1 y \times \p_2 y)
+ c_\a.
\]
Convergent finite element discretizations and iterative strategies for the 
numerical solution of the bilayer plate bending
problem have been devised in the articles~\cite{BaBoNo17,BBMN18} extending the ideas
described in the Sections~\ref{sec:fem_plates} and ~\ref{sec:iter_plates}. The iterative 
scheme of~\cite{BaBoNo17} is
unconditionally stable but requires a subiteration, i.e., the solution of 
a nonlinear system of equations in every time step, which limits its practical
applicability, while the scheme used in~\cite{BBMN18} is explicit and efficient
but can only be expected to be conditionally stable. We devise here 
a new semi-implicit scheme with improved stability properties. The set of admissible
deformations $\cA_h$ and the tangent spaces $\cF_h[y_h]$ are defined
as in Section~\ref{sec:fem_plates} and we set 
\[
\Delta_h v_h = \trace D_h^2 v_h
\]
for a function $v_h\in \cS^\dkt(\cT_h)$ with a componentwise application in
the case of a vector field. The discretized energy functional describing
actuated bilayer plates is therefore given by
\[
I_\bil^h[y_h] = \frac{\cb}{2} \int_\o |D_h^2 y_h|^2 \dv{x'}
+ \a \cspont\int_\o \hcI_h^{1,0} \big[ \Delta_h y_h \cdot (\p_1 y_h \times \p_2 y_h)\big] \dv{x'}.
\]
A uniform discrete coercivity property of this discretization has been 
established in~\cite{BaBoNo17}.
The new iterative minimization from~\cite{BarPal19-in-prep-pre}
uses an explicit treatment of the spontaneous
curvature term. 

\begin{algorithm}[Gradient descent for bilayer plates]\label{alg:bilayer_plates}
Choose $y_h^0\in \cA_h$ and $\tau>0$, set $k=1$. \\
(1) Compute $d_t y_h^k \in \cF_h[y_h^{k-1}]$ such that for all $w_h\in  \cF_h[y_h^{k-1}]$ we have
\begin{multline*}
(d_t y_h^k, w_h)_\star + \cb (D_h^2 y_h^k,D_h^2 w_h)  
= - \a \cspont\int_\o \hcI_h^{1,0} \big[ \Delta_h w_h \cdot (\p_1 y_h^{k-1} \times \p_2 y_h^{k-1})\big] \dv{x'} \\
 \quad - \a \cspont \int_\o \hcI_h^{1,0}\big[\Delta_h y_h^{k-1} \cdot (\p_1 y_h^{k-1} \times \p_2 w_h 
+ \p_1 w_h \times \p_2 y_h^{k-1}) \big]\dv{x'}.
\end{multline*}
(2) Stop if $\|d_t y_h^k\|_\star \le \veps_{\rm stop}$; otherwise 
increase $k\to k+1$ and continue with~(1).
\end{algorithm}

We show that the iteration of Algorithm~\ref{alg:bilayer_plates} is energy stable 
on finite time intervals under a moderate
condition on the step-size resulting from the use of an inverse estimate.
To formulate it, we first note that we assume that the boundary conditions
imply a Poincar\'e type estimate
\[
\|\nabla w_h\|_h \le c_{\rm P} \| D^2_h w_h\|
\]
for all $w_h \in \cS^\dkt(\cT_h)^3$ with $L_\bc^\bil[w_h] =0$. With this 
estimate one verifies that the mesh-dependent estimate
\[
\|\nabla w_h \|_{L^\infty_h} \le c_{\rm inv} |\log \hh | \|D_h^2 w_h \|_h,
\]
holds for all $w_h\in \cS^\dkt(\cT_h)^3$ with $L_\bc^\bil[w_h] =0$
and with the minimal mesh-size~$\hh$. 

\begin{proposition}[Convergent iteration]
The iterates of Algorithm~\ref{alg:bilayer_plates} are well defined. 
Assume that we have
\[
\|D_h^2 w_h\|_h \le c_\star \|w_h\|_\star
\]
for all $w_h \in \cS^\dkt(\cT_h)^3$ with $L_\bc^\bil[w_h] = 0$. 
Then there exist constants $c_\bil,c_\bil'>0$ such that 
if $\tau|\log \hh|^2 \le c_\bil'$ for $L=0,1,\dots,K\le T/\tau$ we have 
\[
I_\bil^h[y_h^L] + (1-c_\bil\tau|\log \hh|)\tau  \sum_{k=1}^L \|d_t y_h^k\|_\star^2 
\le I_\bil^h [y_h^0],
\]
and 
\[
\max_{k=0,\dots,L} \|[\nabla y_h^k]^\transp \nabla y_h^k - I_2 \|_{L^\infty_h} 
\le 2 \tau  c_{\rm inv}^2 |\log \hh|^2 e_{0,h},
\]
where $e_{0,h} = I_\bil^h [y_h^0]$.
\end{proposition}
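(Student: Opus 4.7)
The plan is to follow the template of Theorem~\ref{thm:abstract_flow} and Proposition~\ref{thm:iteration}, but carefully handle the extra cubic spontaneous curvature term $E_{\rm sc}[y_h] := \a\cspont\int_\o \hcI_h^{1,0}[\Delta_h y_h\cdot(\p_1 y_h\times \p_2 y_h)]\dv{x'}$ that is treated explicitly in Algorithm~\ref{alg:bilayer_plates}. The right-hand side of the update equation is precisely $-E_{\rm sc}'[y_h^{k-1};w_h]$, the first Fr\'echet derivative at the previous iterate. Well-posedness then follows immediately: the bilinear form $a(u,v) = (u,v)_\star + \tau\cb(D_h^2 u,D_h^2 v)$ is symmetric and coercive on the nonempty linear space $\cF_h[y_h^{k-1}]$, the right-hand side is linear, so Lax--Milgram yields a unique $d_t y_h^k$.

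The core step is to derive an energy identity by testing with $w_h = d_t y_h^k$ and multiplying by $\tau$. The binomial formula on the bending term gives $\tau\cb(D_h^2 y_h^k,D_h^2 d_t y_h^k) = (\cb/2)\,d_t\|D_h^2 y_h^k\|^2\cdot\tau + (\cb\tau^2/2)\|D_h^2 d_t y_h^k\|^2$. Since $E_{\rm sc}$ is a cubic (trilinear) functional, an exact Taylor expansion yields
\[
E_{\rm sc}[y_h^k] - E_{\rm sc}[y_h^{k-1}]
= \tau E_{\rm sc}'[y_h^{k-1};d_t y_h^k]
+ \tfrac{\tau^2}{2} E_{\rm sc}''[y_h^{k-1};d_t y_h^k,d_t y_h^k]
+ \tfrac{\tau^3}{6} E_{\rm sc}'''[d_t y_h^k,d_t y_h^k,d_t y_h^k].
\]
Substituting this into the tested equation produces the identity
\[
I_\bil^h[y_h^k] + \tau\|d_t y_h^k\|_\star^2 + \tfrac{\cb\tau^2}{2}\|D_h^2 d_t y_h^k\|^2
= I_\bil^h[y_h^{k-1}] + R_2^k + R_3^k,
\]
with Taylor remainders $R_2^k$ and $R_3^k$ of quadratic and cubic order in $d_t y_h^k$.

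The main obstacle, and the reason for the condition $\tau|\log\hh|^2\le c_\bil'$, is to absorb $R_2^k + R_3^k$ into the left-hand side. The second derivative has the structure $E_{\rm sc}''[y;v,v] \lesssim \|D_h^2 v\|_h \|\nabla v\|_{L^\infty_h}\|\nabla y\|_{L^\infty_h} + \|D_h^2 y\|_h\|\nabla v\|_{L^\infty_h}^2$, and $E_{\rm sc}'''[v,v,v]\lesssim \|D_h^2 v\|_h\|\nabla v\|_{L^\infty_h}^2$. Applying the inverse estimate $\|\nabla w_h\|_{L^\infty_h}\le c_{\rm inv}|\log\hh|\|D_h^2 w_h\|_h$ together with $\|D_h^2 w_h\|_h \le c_\star\|w_h\|_\star$, and using that $\|\nabla y_h^{k-1}\|_{L^\infty_h}$ and $\|D_h^2 y_h^{k-1}\|$ remain bounded by quantities controlled by $e_{0,h}$ (through the approximate isometry and the bending part of $I_\bil^h$), gives $|R_2^k|\le c_\bil\tau\cdot\tau|\log\hh|\|d_t y_h^k\|_\star^2$ and $|R_3^k|\le c_\bil\tau|\log\hh|^2\|d_t y_h^k\|_\star\cdot\tau\|d_t y_h^k\|_\star^2$. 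Under the smallness assumption the second-order remainder contributes the factor $(1-c_\bil\tau|\log\hh|)$, while the cubic remainder is absorbed via a bootstrap: one inductively assumes the energy bound up to step $k-1$, which, together with the smallness assumption, controls $\tau\|d_t y_h^k\|_\star$ uniformly and allows the cubic term to be hidden as well. Summation over $k=1,\dots,L$ then yields the stated energy estimate.

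The isometry-defect bound is established exactly as in Proposition~\ref{thm:iteration}: at every node $z\in\cN_h$ the linearized constraint encoded in $\cF_h[y_h^{k-1}]$ kills the cross term in the expansion $[\nabla y_h^k(z)]^\transp\nabla y_h^k(z) = [\nabla y_h^{k-1}(z)]^\transp\nabla y_h^{k-1}(z) + \tau^2[\nabla d_t y_h^k(z)]^\transp\nabla d_t y_h^k(z)$. Iterating from $y_h^0\in\cA_h$, taking the $L^\infty_h$ norm, and invoking the logarithmic inverse estimate yields $\|(\nabla y_h^k)^\transp\nabla y_h^k-I_2\|_{L^\infty_h}\le c_{\rm inv}^2 c_\star^2|\log\hh|^2\,\tau\sum_{\ell=1}^k \tau\|d_t y_h^\ell\|_\star^2$, and the energy estimate bounds the sum by (essentially) $2e_{0,h}$, giving the constraint violation bound after absorbing generic constants.
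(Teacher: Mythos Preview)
Your approach is correct and essentially equivalent to the paper's. You phrase the key step as an exact Taylor expansion of the cubic functional $E_{\rm sc}$ at $y_h^{k-1}$; the paper instead applies a discrete product rule to compute $d_t E_{\rm sc}[y_h^k]$ and compares it term by term with the algorithm's right-hand side at $w_h=d_t y_h^k$. Since $E_{\rm sc}$ is trilinear, the two computations produce exactly the same remainder terms, so the substance is identical.

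There is one organizational difference worth noting. The paper runs a two-stage induction: it first tests with $d_t y_h^k$ and crudely absorbs the right-hand side to get $\|D_h^2 y_h^k\|\le c'$, and combines this with the nodal orthogonality and the logarithmic inverse estimate to obtain a \emph{suboptimal} bound $\|[\nabla y_h^k]^\transp\nabla y_h^k-I_2\|_{L^\infty_h}\le c\tau|\log\hh|^2 e_{0,h}$ already at step $k$. Only then does it perform the refined energy comparison, now using $\|\nabla y_h^{k-\ell}\|_{L^\infty_h}\le c$ for both $\ell=0,1$. This lets the cubic contribution hide inside the factor $\|\nabla y_h^k\|_{L^\infty_h}$ rather than appear as a separate $R_3^k$. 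Your alternative---a bootstrap controlling $\tau\|d_t y_h^k\|_\star$ from the equation at step $k$ using the inductive bounds on $y_h^{k-1}$---works just as well but is slightly more implicit.

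One small correction: the structural estimate you write for the second variation contains the term $\|D_h^2 y\|_h\,\|\nabla v\|_{L^\infty_h}^2$, which after two applications of the inverse estimate yields $|\log\hh|^2$ and would give $|R_2^k|\le c\,\tau^2|\log\hh|^2\|d_t y_h^k\|_\star^2$, not matching the single-log factor $(1-c_\bil\tau|\log\hh|)$ in the statement. The remedy is to distribute H\"older as $\|D_h^2 y\|_h\,\|\nabla v\|_{L^\infty_h}\,\|\nabla v\|_h$ and bound the last factor via the Poincar\'e-type inequality $\|\nabla v\|_h\le c_{\rm P}\|D_h^2 v\|$; this is precisely what the paper does and recovers the correct power of the logarithm. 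The same sharpening applies to your bound on $E_{\rm sc}'''$.
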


\begin{proof}
We argue by induction and assume that the estimate has been established
for $L-1$. Choosing $w_h = d_t y_h^k$ and using H\"older's inequality 
shows that for $k\le L$ we have
\[\begin{split}
\| & d_t y_h^k \|_\star^2 + \frac{\cb}{2} d_t \|D_h^2 y_h^k\|^2 \\
&\le \a \cspont \|\Delta_h d_t y_h^k\|_h \|\p_1 y_h^{k-1}\|_h \|\p_2  y_h^{k-1}\|_{L^\infty_h} \\
& \ + \a \cspont  \|\Delta_h y_h^{k-1}\|_h 
\big(\|\p_1 y_h^{k-1}\|_{L^\infty_h} \|\p_2 d_t y_h^k\|_h  + \|\p_2 y_h^{k-1}\|_{L^\infty_h} \|\p_1 d_t y_h^k\|_h\big).
\end{split}\]
Absorbing terms involving $d_t y_h^k$ on the left-hand side and 
noting that terms involving $y_h^{k-1}$ are bounded because of the 
bounds for $k\le L-1$, we find that
\[
\frac{\tau}{2} \|d_t y_h^k\|_\star^2 + \frac{\cb}{2} \|D_h^2 y_h^k\|_h^2 
\le \frac{\cb}{2} \|D_h^2 y_h^{k-1}\|_h^2 + \tau c
\le c'.
\]
Here, we also used that the discrete energy is coercive. With this
intermediate estimate and the orthogonality condition encoded in the 
definition of $\cF_h[y_h^{k-1}]$ we infer with $y_h^k = y_h^{k-1}+ \tau d_t y_h^k$
that
\[
[\nabla y_h^k(z)]^\transp \nabla y_h^k(z) 
= [\nabla y_h^{k-1}(z)]^\transp \nabla y_h^{k-1}(z) 
+ \tau^2 [\nabla d_t y_h^k(z)]^\transp \nabla d_t y_h^k(z) 
\]
and hence with the inverse estimate we obtain the suboptimal estimate that 
\[
\|[\nabla y_h^k]^\transp \nabla y_h^k - I_2 \|_{L^\infty_h}
\le c \tau |\log \hh|^2 e_{0,h} \le c''.
\]
To obtain the energy bound we note that a discrete product rule shows that 
the discrete time derivative of the spontaneous curvature term is given by 
\[\begin{split}
d_t & \int_\o \hcI_h^{1,0} \big[ \Delta_h y_h^k \cdot (\p_1 y_h^k \times \p_2 y_h^k) \big] \dv{x'} \\
& = \int_\o \hcI_h^{1,0} \big[ \Delta_h d_t y_h^k \cdot (\p_1 y_h^k \times \p_2 y_h^k) \big] \dv{x'} \\
& \qquad + \int_\o \hcI_h^{1,0} \big[ \Delta_h y_h^{k-1} \cdot (\p_1 y_h^{k-1} \times \p_2 d_t y_h^k 
+ \p_1 d_t y_h^k \times \p_2 y_h^k \big] \dv{x'}.
\end{split}\]
Comparing this expression with the right-hand side of the equation 
in Step~(1) of Algorithm~\ref{alg:bilayer_plates} with $w_h = d_t y_h^k$ leads to 
\[\begin{split}
\|d_t y_h^k & \|_\star^2 + d_t I_\bil^h[y_h^k] + \tau \frac{\cb}{2} \|d_t D_h^2 y_h^k\|^2 \\
& =  \a \cspont \int_\o \hcI_h^{1,0} \big[ \Delta_h d_t y_h^k \cdot 
\big(\p_1 y_h^k \times \p_2 y_h^k- \p_1 y_h^{k-1} \times \p_2 y_h^{k-1}\big) \big] \dv{x'} \\
& \qquad + \a \cspont \int_\o \hcI_h^{1,0} \big[ \Delta_h y_h^{k-1} \cdot (
\p_1 d_t y_h^k \times \p_2 \big(y_h^k-y_h^{k-1}\big) \big] \dv{x'} \\
&\le c \|\Delta_h d_t y_h^k\|_h 
\big( \|\p_1 y_h^k \|_{L^\infty_h} \tau \|\p_2 d_t y_h^k\|_h + 
\|\p_1 y_h^{k-1} \|_{L^\infty_h} \tau \|\p_2 d_t y_h^k\|_h \big)  \\
& \qquad + c \|\Delta_h y_h^{k-1}\|_h \tau \|\p_1 d_t y_h^k\|_h \|\p_2 d_t y_h^k\|_{L^\infty_h}.
\end{split}\]
We use the estimates $\|\p_j d_t y_h^k\|_{L^\infty_h} \le c |\log \hh| \|d_t y_h^k\|_\star $ 
and $\|\p_j y_h^{k-\ell}\|_{L^\infty_h} \le c$ for $j=1,2$ and $\ell=0,1$, 
to bound the right-hand side by $\tau |\log \hh| c''' \|d_t y_h^k\|_\star^2$. This leads to 
\[
(1-c'''\tau|\log \hh|) \|d_t y_h^k\|_\star^2 + d_t I_\bil^h[y_h^k]\le 0 
\] 
and proves the asserted energy estimate. With this bound we also obtain  
the improved bound for the constraint violation error. 
\end{proof}

The good stability properties of the newly proposed numerical scheme are
confirmed by a numerical experiment whose outcome is visualized in
Figure~\ref{fig:exp_bilayer}. The setup uses a rectangular strip of length
$L=10$ and width $w=4$ that is clamped at one end. The spontaneous
curvature parameter is $\a=-1$ and we have $\cb=1$ and $\cspont=1$. 
The figure shows snapshots of the evolution
on a grid with medium mesh-size and stationary states for different triangulations
with mesh-sizes~$h$ proportional to $2^{-\ell}$, $\ell=1,2,3$.
The step-size was always set to $\tau = h/20$. The stopping criterion
was chosen as $\veps_{\rm stop} = 10^{-3}$. Because
of the extreme geometry and the large deformation the asymmetry of the
underlying triangulations is reflected in the numerical solutions but this
effect disappears for smaller mesh-sizes. The theoeretical results about
the energy monotonicity and controlled constraint violation are confirmed
by the bottom plot of Figure~\ref{fig:exp_bilayer}.  

\renewcommand{\bild}[1]{\fbox{\includegraphics[scale=.09,trim=0 -75 0 50,clip]{bilayer-#1.png}}\makebox[0ex][r]{\tiny$k=#1$\ }\,\ignorespaces}
\begin{figure}[p]
\bild{0} \bild{250} \bild{7500} \\
\bild{20000} \bild{30000} \bild{40000} \\
 \vspace*{4mm}
 \fbox{\includegraphics[scale=.09,clip]{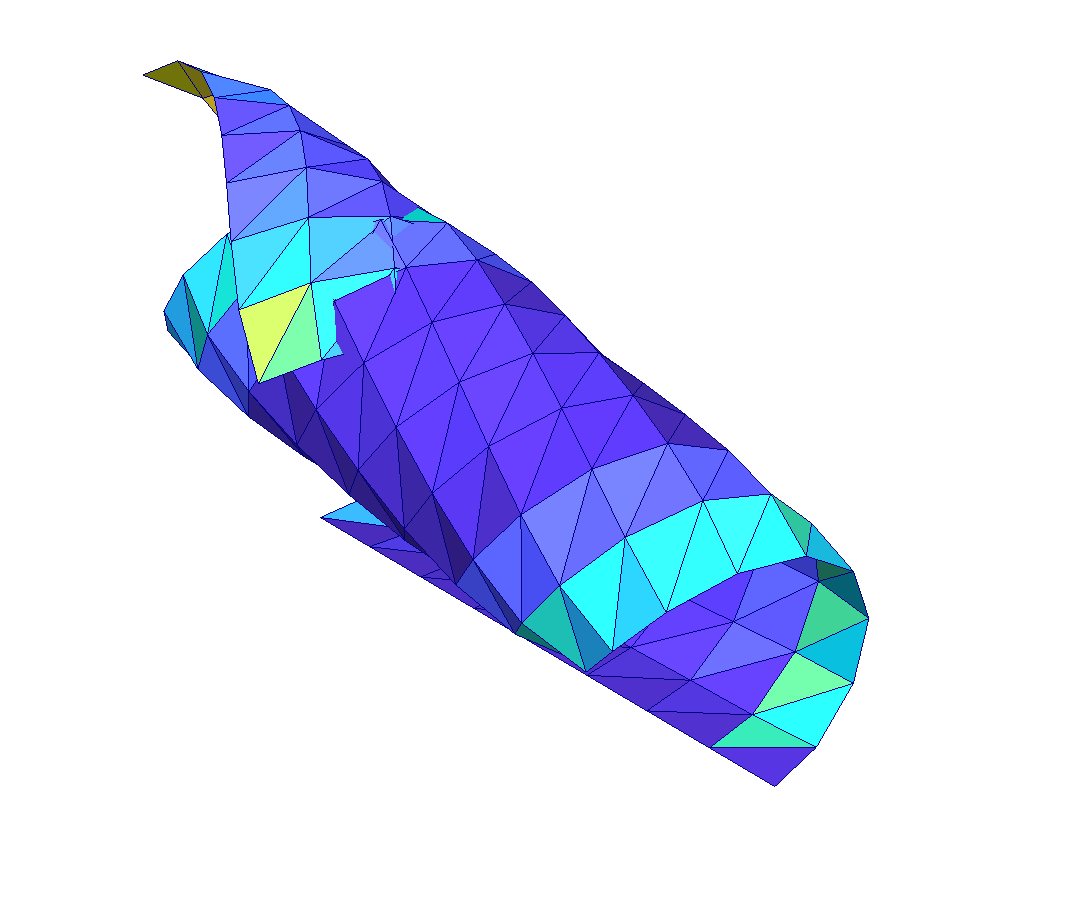}}\makebox[-13ex][r]{\tiny $\#\cT_h=320$\ }\makebox[13ex][r]{\tiny $k=27896$ }\,\ignorespaces
 \fbox{\includegraphics[scale=.09,clip]{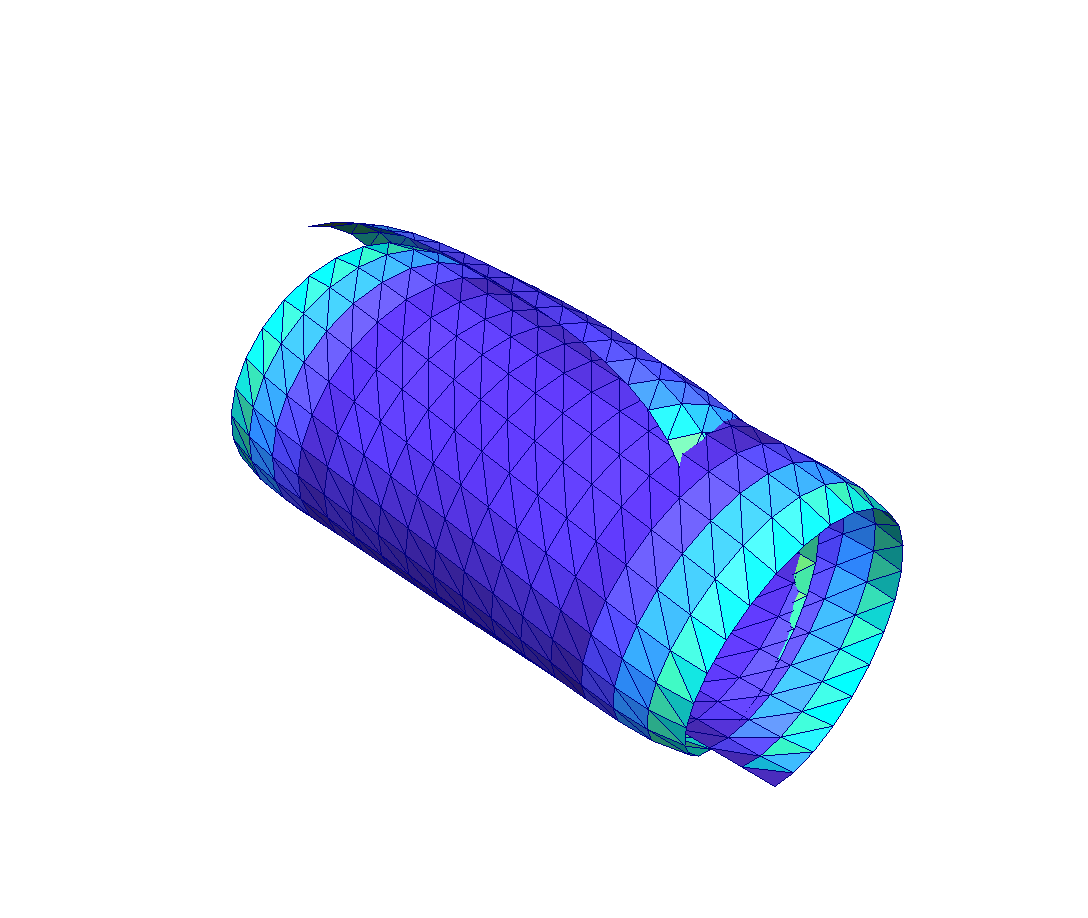}}\makebox[-12ex][r]{\tiny $\#\cT_h=1280$\ }\makebox[12ex][r]{\tiny $k=36837$ }\,\ignorespaces
 \fbox{\includegraphics[scale=.09,clip]{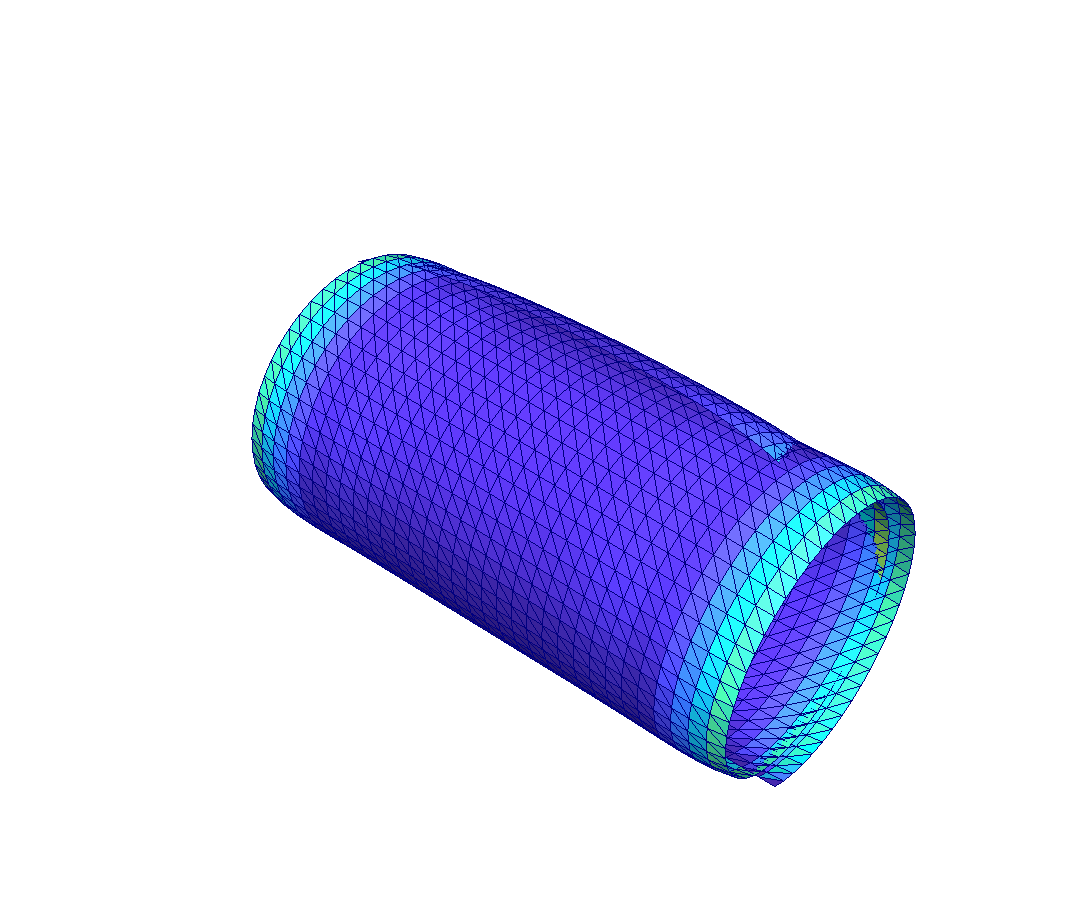}}\makebox[-12ex][r]{\tiny $\#\cT_h=5120$\ }\makebox[12ex][r]{\tiny $k=71003$ }\,\ignorespaces
 \vspace*{4mm}
\includegraphics[scale=.75]{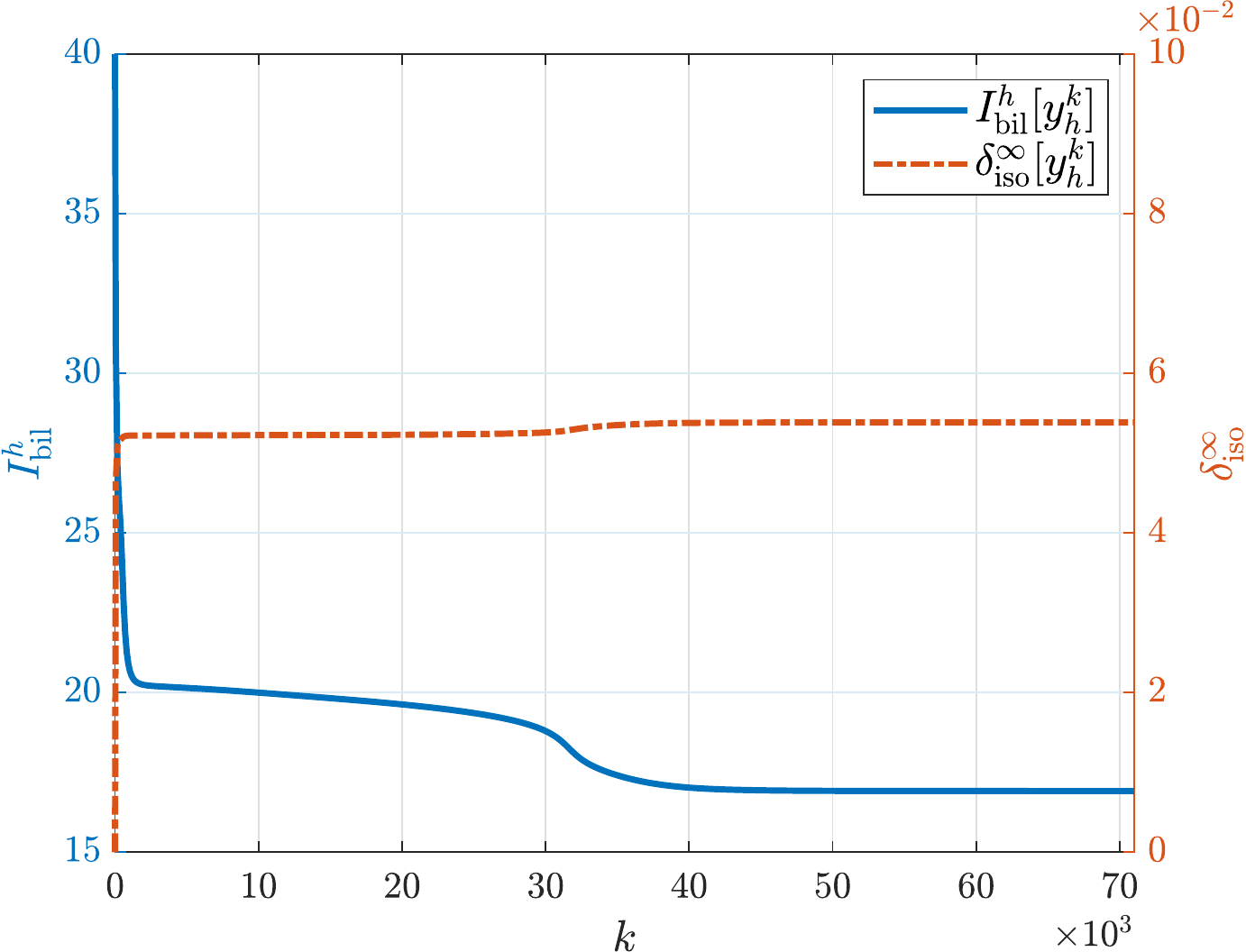}
\caption{\label{fig:exp_bilayer}
Iterates in a bilayer bending problem with 
clamped boundary condition on one end of the strip. The plate immediately
bends everywhere and evolves into a multiply covered tube (top); 
discretization effects like asymmetries disappear for
finer triangulations (middle); the energy decreases monotonically
and becomes stationary (bottom).}
\end{figure}

\subsection{Selfavoiding curves and elastic knots}
A strategy for finding useful representatives of knot classes, i.e., 
closed curves within a given isotopy class with particular features, 
is to minimize or decrease the bending energy within the given class via 
continuous evolutions defined by a gradient flow. To ensure that the
flow does not change the topological properties of the curve, appropriate
terms have to be included in the mathematical formulation. To this end
we add a self-avoidance potential to the bending energy that prevents
the curve from self-intersecting or pulling tight, i.e., we consider
flows determined by functionals
\[
I^\tot[y] = \frac{\cb}{2} \int_0^L |y''|^2 \dv{x_1} + \vrho \TP[y]
\]
in sets of curves satisfying periodic or, e.g., clamped boundary conditions.
We refer the
reader to~\cite{OHar03-book} for a general discussion of appropriate functionals. A
functional that turned out to have several advantageous features is
the tangent-point functional proposed in~\cite{GonMad99}. It is 
for a $C^1$ curve $y:[0,L]\to \R^3$ defined via the tangent-point
radius $r_y(x,z)$ which is the radius of the circle that is tangent to $y$ in
$y(x)$ and intersects the curve in the point $y(z)$. For curves that
are parametrized by arclength we have
\[
r_y(x,z) = \frac12 \frac{|y(x)-y(z)|}{|y'(x)\times (y(x)-y(z))|}
\]
which follows from investigating the geometrical configuration sketched in 
Figure~\ref{fig:tp_radius}. 
If $z\to x$ then we have that $r_y(x,z)$ converges to the inverse of the 
curvature of the curve $y$ at $x$. If otherwise $y(x)\to y(z)$ for fixed
$x\neq z$ then the radius
converges to zero as depicted in Figure~\ref{fig:tp_singular}. 
An alternative choice is the use of the Menger curvature which is discussed
in~\cite{StSzvdM13}.

\def\bfr{{\bf r}}
\begin{figure}[h]
\input{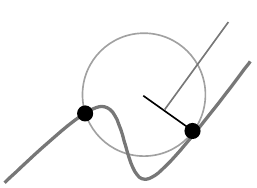_t}
\caption{\label{fig:tp_radius} The tangent-point radius 
$r_y(x,z)$ of a curve $y$ is the radius of the circle that is tangent
to $y$ in $y(x)$ and intersects the curve in $y(z)$.}
\end{figure}

The tangent-point functional $\TP$ is for an exponent $q\ge 1$ defined
via
\[
\TP[y] = \frac{2^q}{q} \int_0^L \int_0^L \frac{1}{r_y(x,z)^q} \dv{x}\dv{z}.
\]
The exponent has to be suitably chosen so that singularities corresponding
to a vanishing tangent-point radius are sufficiently strong to lead to 
an infinite value of the functional. Thereby, an energy barrier is defined
that separates different isotopy classes. The functional~$\TP$ has important 
and remarkable features. 

\def\bfr{{\bf r}}
\begin{figure}[h]
\input{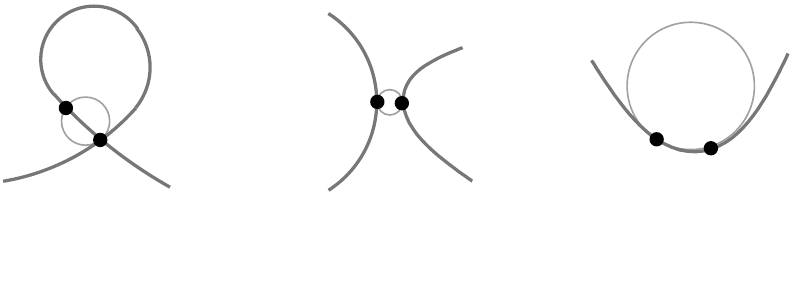_t}
\caption{\label{fig:tp_singular} Situations in which the tangent-point
radius approaches zero leading to a singularity in the tangent-point
functional $\TP$ (left and middle); as $z\to x$ the tangent-point radius approximates
the inverse of the curvature of $y$ at $x$ (right).}
\end{figure}

\begin{remarks}
(i) If the arclength-parametrized curve $y\in C^1([0,L];\R^3)$ is injective 
then $\TP[y]$ is finite if and only if $y\in W^{2-1/q,q}(0,L;\R^3)$, 
cf.~\cite{Blat13}. \\
(ii) For all $M>0$ and $q>2$ there exists a constant $c_{M,q}>0$ such that
for all arclength-parametrized curves $y\in C^1([0,L];\R^3)$ with $\TP[y]\le M$
we have the bi-Lipschitz estimate $|x-z|\le c_{M,q} |y(x)-y(z)|$ for all $x,z\in [0,L]$,
cf.~\cite{BlaRei15}.
In particular, $\TP$ is a knot energy in the sense that for a family 
of curves $(y_k)_{k\in \N}$ converging pointwise to a curve with self-intersection, 
the values $\TP[y_k]$ blow up, cf.~\cite{VdM98,OHar03-book,StrVdM12,BlaRei15}. \\
(iii) The functional $\TP$ is twice continuously differentiable with bounded
variations and $L^1$ integrands, cf.~\cite{BlaRei15,BarRei18-pre}. 
\end{remarks}

To decrease the total energy $I^\tot$ of a given initial curve $y_0$ 
within its isotopy class we use a discretization of the evolution 
\[
(\p_t y,w)_\star + \cb (y'',w'') + \vrho \TP'[y;w] = 0
\]
subject to the linearized arclength conditions $\p_t y' \cdot y' = 0$ 
and $w' \cdot y' = 0$. The discretization uses the
ideas outlined in Sections~\ref{sec:fem_rods} and~\ref{sec:iter_rods}
with an explicit treatment of the potential.

\begin{algorithm}[Gradient descent for selfavoiding curves]\label{alg:descent_selfavoid}
Choose an initial $y_h^0 \in \cA_h$ and a step-size $\tau>0$,
set $k=1$. \\
(1) Compute $d_t y_h^k\in \cF_h[y_h^{k-1}]$ such that for all $w_h \in \cF_h[y_h^{k-1}]$
we have
\[
(d_t y_h^k,w_h)_\star + \cb  ([y_h^k]'',w_h'') = - \vrho\TP_h'[y_h^{k-1};w_h].
\]
(2) Stop the iteration if $\|d_t y_h^k\|_\star \le \veps_{\rm stop}$; 
otherwise, increase $k\to k+1$ and continue with~(1).
\end{algorithm}

The explicit treatment
of the potential has several advantages. First, we obtain linear problems
in the time steps. Second, we avoid the inversion of fully populated matrices.
Third, the assembly of the vector on the right-hand side can be fully 
parallelized without communication costs. 
Finally, we do not change the stability of the iteration compared to a fully
implicit time stepping scheme since the potential does not have obvious convexity 
properties. Using the differentiability properties of the functional $\TP$ and assuming 
that the flow metric is the $H^2$ scalar product, it has been shown in~\cite{BarRei18-pre}
that the conditional energy decay property
\[
I^\tot_h[y_h^L] + (1-c_\TP \tau) \tau \sum_{k=1}^L \|d_t y_h^k\|_\star^2 
\le I^\tot_h [y_h^0]
\]
holds for all $0\le L \le K \le T/\tau$ with a finite time horizon $T>0$. 
This inequality implies the convergence of the iteration to 
a stationary configuration. Since the total energy is bounded uniformly,
the values of the potential remain controlled so that in case of a
sufficient resolution no self-contact or topology changes can occur. 

The good stability properties and corresponding selfavoidance behavior
of the numerical scheme are illustrated 
by the snapshots of configurations and the corresponding energy decay
shown for a generic setting in Figure~\ref{fig:knot_25}. We observe
that the curve relaxes to a curve with equilibrated curvature and which
is nearly flat. While the total energy decreases the (unscaled) tangent-point
functional increases. In the experiments we used the model parameters 
$\cb = 10$, $\vrho = 10^{-3}$, $q=3.9$. The initial curve
belongs to the knot class $8_{10}$, cf.~\cite{Rolf90-book}, and has a total 
length $L\approx 45.45$ which is accurately preserved during the evolution. 
To compute the evolution we used a partitioning of the reference
interval into $551$ subintervals. The step-size $\tau$ was chosen
proportional to the mesh-size~$h$. 

\renewcommand{\bild}[1]{\fbox{\includegraphics[scale=.11,trim=150 100 130 50,clip]{#1.jpg}}\makebox[0ex][r]{\tiny$k=#1$\ }\,\ignorespaces}
\begin{figure}[p]
\bild{0} \bild{10} \bild{30} \\
\bild{100}  \bild{400} \bild{1000} \\
\bild{2500} \bild{4000} \bild{10000}
\vspace*{5mm}
\includegraphics[width=10.5cm,height=8cm]{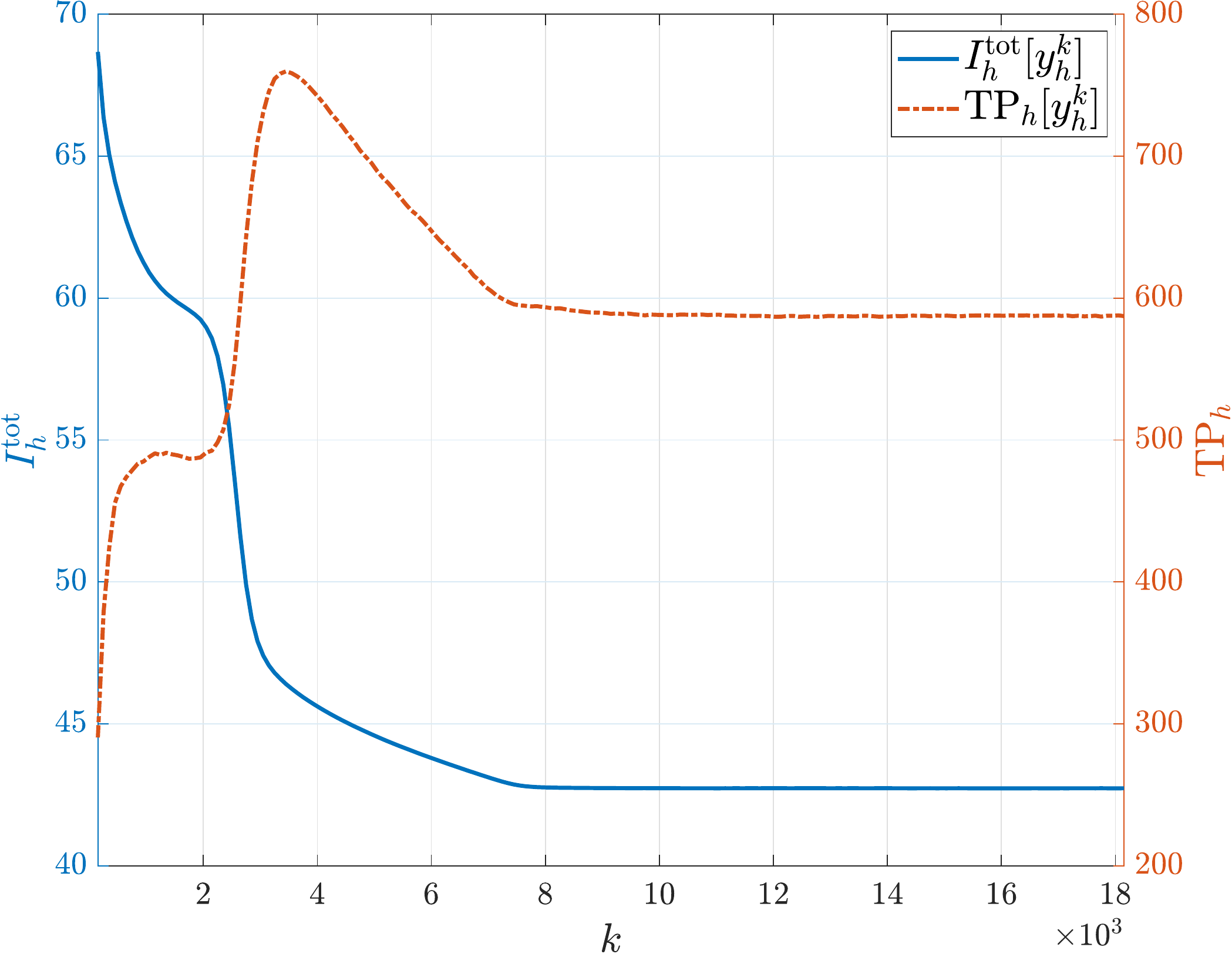}
\caption{\label{fig:knot_25}
Snapshots of an evolution from a polygonal initial curve
after different numbers of iterations. The curve relaxes to a
configuration that is close to a multiple covering of a flat circle (top).
The total energy decreases monotonically while the contribution of 
the tangent-point functional remains controlled (bottom) so that the 
curve preserves the isotopy class defined by its initial configuration.}
\end{figure}

\subsection{\fvk\ model}
The nonlinear bending model discussed in the previous sections is 
not suitable to describe certain phenomena such as the formation 
of wrinkles which occur for very small energies. In order to describe 
these effects in a 
dimensionally reduced model it is essential to involve the thickness
parameter $\d>0$ as this number determines the oscillating or 
nonsmooth behavior of deformations. A model that captures such effects
is the \fvk\ model which can be identified via different scalings
of the in-plane and out-of-plane components of a three-dimensional
deformation when~$\d$ is small. It determines a planar
displacement $u:\o\to \R^2$ and a deflection $w:\o \to \R$ 
as a minimizing pair for the energy functional 
\[
I_\fv[u,w] = \frac{\d^2}{2} \int_\o |D^2 w|^2 \dv{x} 
+ \frac12 \int_\o |\tveps(u) + \nabla w \otimes \nabla w|^2 \dv{x}
\]
in a set $\cA$ of admissible pairs contained in the product space 
$H^1(\o;\R^2)\times H^2(\o)$. Here, we use the symmetric gradient
\[
\tveps(u) = 2 \sym(\nabla u) = (\nabla u)^\transp + \nabla u
\]
and the dyadic product
\[
\nabla w \otimes \nabla w = (\nabla w) (\nabla w)^\transp,
\]
where $\nabla w$ is understood as a column vector. We refer the reader 
to~\cite{Ciar80,Ciar97-book,FrJaMu02c,FrJaMu06} for justifications
of this minimization problem as a simplification of three-dimensional
hyperelastic material models. It has been shown rigorously 
in~\cite{MulOlb14b,Venk04} that the presence of the parameter~$\d$
leads to minimizers with wrinkling patterns whose geometry is
determined by the parameter~$\d$. This is done by identifying optimal scaling 
laws of the energy in terms of $\d$, cf.~\cite{BelKoh14,CoOlTo17} for further
examples. Only a few numerical methods
for minimizing the \fvk\ energy are available, cf.~\cite{CiGrKe05}
for an abstract investigation. 

To minimize the energy for given boundary conditions we follow~\cite{Bart17}
and again adopt a gradient flow appraoch defined by the system 
\[\begin{split}
\big(\p_t w,v\big)_\verti &= -\g^2 \big(D^2 w,D^2 v\big) - 2 \big(|\nabla w|^2\nabla w 
+ \tveps(u)\nabla w,\nabla v\big), \\
\big(\p_t u,z\big)_\hor &= - \big(\tveps(u),\tveps(z)\big) 
- \big(\nabla w\otimes \nabla w,\tveps(z)\big).
\end{split}\]
Here, $(\cdot,\cdot)_\verti$ and $(\cdot,\cdot)_\hor$
are inner products on $H^2(\o)$ and $H^1(\o;\R^2)$, respectively,
and we used the identities  $|a\otimes a|^2 =|a|^4$ and 
$M:(a\otimes b) = (Ma)\cdot b=(Mb)\cdot a$
for a symmetric matrix $M\in \R^{2\times 2}$ and $a,b\in \R^2$. 
The temporal discretization of the system
decouples the equations via a semi-implicit evaluation of
different terms. In particular, given $(u^{k-1},w^{k-1})$ 
we compute $(u^k,w^k)$ such that 
\[\begin{split}
\big(d_t w^k,v\big)_\verti  & = -\g^2 \big(D^2 w^k,D^2 v\big) 
- 2 \big(|\nabla w^k|^2\nabla w^k + \tveps(u^{k-1})\nabla w^{k-1/2},\nabla v\big), \\
\big(d_t u^k,z\big)_\hor &= 
- \big(\tveps(u^k),\tveps(z)\big) - \big(\nabla w^k\otimes \nabla w^k,\tveps(z)\big),
\end{split}\]
for all $(v,z)$ satisfying appropriate homogeneous boundary conditions. The 
average $w^{k-1/2}$ is defined for subsequent approximations 
$w^k$ and $w^{k-1}$ via
\[
w^{k-1/2} = \frac12 \big(w^k + w^{k-1}\big).
\]
The unconditional stability and energy decay of the iteration 
follows from choosing $v=d_t w^k$ and $z = d_t u^k$ and exploiting
a discrete product rule, cf.~\cite{Bart17} for details. 
The problems in the time steps determine
minimizers of certain functionals and if $0<\tau \le \tau_0$ then
these functionals are strongly convex and minimizers are uniquely 
defined. Correspondingly, we expect the Newton scheme to converge
provided that the step-size $\tau$ is sufficiently small. 

For a full discretization we use the set of admissible pairs
\[
\cA_h = \big\{(u_h,w_h) \in \cS^1(\cT_h)^2 \times \cS^\dkt(\cT_h):
L_\bc[u_h,w_h] = \ell_\bc \big]
\]
and the corresponding homogeneous space
\[\begin{split}
\cF_{h,w}^0 &= \big\{ v_h \in \cS^\dkt(\cT_h): L_{\bc,w}[v_h] = 0\big\}, \\
\cF_{h,u}^0 &= \big\{ z_h \in \cS^1(\cT_h)^2: L_{\bc,u}[z_h] = 0 \big\}.
\end{split}\]
To avoid an unnecessary small uniform step-size $\tau$ we instead use
variable step-sizes $(\tau_k)_{k=0,1,\dots}$ that are adjusted
according to the performance of the Newton scheme using the following
rules:
\[\begin{split}
&\text{(i)} \quad \text{decrease $\tau_k$ until Newton scheme terminates within $N_{\rm max}$ iterations}, \\  
&\text{(ii)} \quad \text{set } \tau_{k+1} = \min\{2 \tau_k,10^r\} \text{ for next time step}.
\end{split}\]
The parameter $r\ge 0$ defines an upper bound for the step-sizes
and avoids a numerical overflow. The ideas lead to the following algorithm.

\begin{algorithm}[Gradient descent for \fvk\ functional]\label{alg:fvk_grad_flow}
Choose $(u_h^0,w_h^0)\in \cA_h$, an integer $N_{\rm max}>0$, 
stopping tolerances $\veps_\stop,\veps_{\rm N}>0$, 
and an initial step-size $\tau_1>0$, set $k=1$. \\
(1a) Repeatedly decrease $\tau_k$ until the Newton scheme terminates within $N_{\rm max}$ steps
and tolerance $\veps_{\rm N}$ to determine $d_t w_h^k \in \cF_{h,w}^0$ such that
\[\begin{split}
(d_t w_h^k, v_h)_\verti &=  -\g^2 \big(D_h^2 w_h^k,D_h^2 v_h\big) \\
& \qquad - 2 \big(|\nabla w_h^k|^2\nabla w_h^k + \tveps(u_h^{k-1})\nabla w_h^{k-1/2},\nabla v_h\big)_h .
\end{split}\]
for all $v_h\in \cF_{h,w}^0$. \\
(1b) Compute $d_t u_h^k \in \cF_{h,u}^0$ such that 
\[
\big(d_t \tveps(u^k_h),\tveps(z_h)\big)_\hor = - \big(\tveps(u_h^k),\tveps(z_h)\big) 
- \big(\nabla w_h^k\otimes \nabla w_h^k,\tveps(z_h)\big)_h
\]
for all $z_h\in \cF_{h,u}^0$. \\
(2) Stop if $\|d_t w_h^k\|_\verti + \|d_t \tveps(u^k_h)\|_\hor  \le \veps_\stop \min\{1,\tau_k\}$; 
otherwise, define 
\[
\tau_{k+1} = \min\big\{ 2 \tau_k,10^r\big\},
\]
increase $k\to k+1$, and continue with~(1).
\end{algorithm}

Note that in the algorithm a stopping criterion is used that is proportional 
to the step-size which is important in the case of small step-sizes. To 
illustrate the performance of the algorithm and features of the mathematical
model we follow~\cite{Bart17} and consider the compression of a plate along one of its sides. Particularly,
we let $\o = (-1/2,1/2)\times (0,1)$ and compress the side $\{0\} \times [-1/2,1/2]$
by~10 percent which defines the boundary data for the in-plane displacement~$u$. 
We use homogeneous clamped boundary conditions along the same side
for the deflection~$w$. The results shown in Figure~\ref{fig:fvk_exp} 
confirm the formation of wrinkling structures. These depend on both the 
numerical resolution and the thickness parameter. Moreover, the energy
decay shown in the bottom plot of Figure~\ref{fig:fvk_exp} indicates that 
these are only obtained within
a reasonable number of iterations if an adaptive step-size strategy is used. 

\begin{figure}[p]
\begin{center}
\includegraphics[height=3.8cm,width=4.3cm]{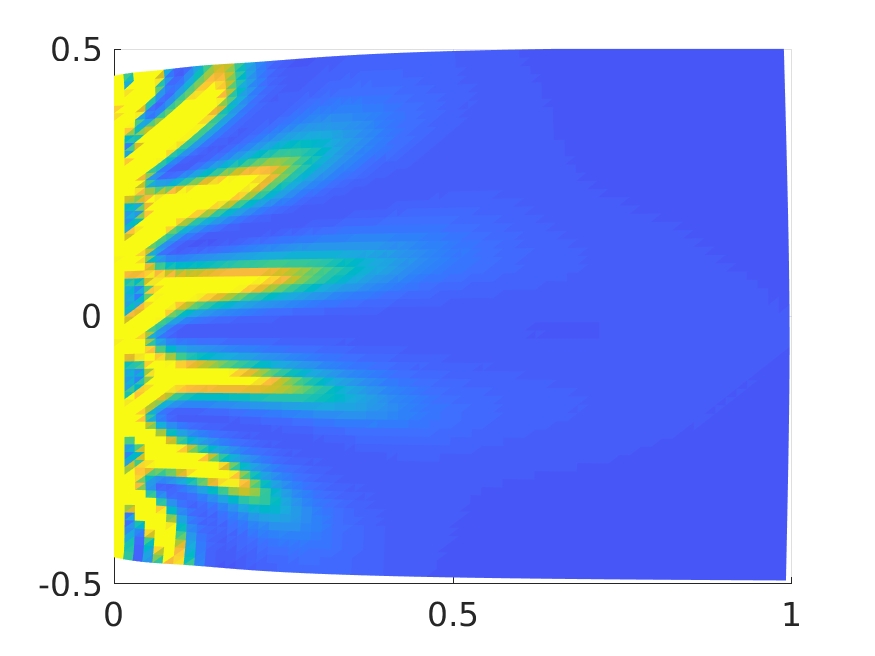} \hspace{-4mm}
\includegraphics[height=3.8cm,width=4.3cm]{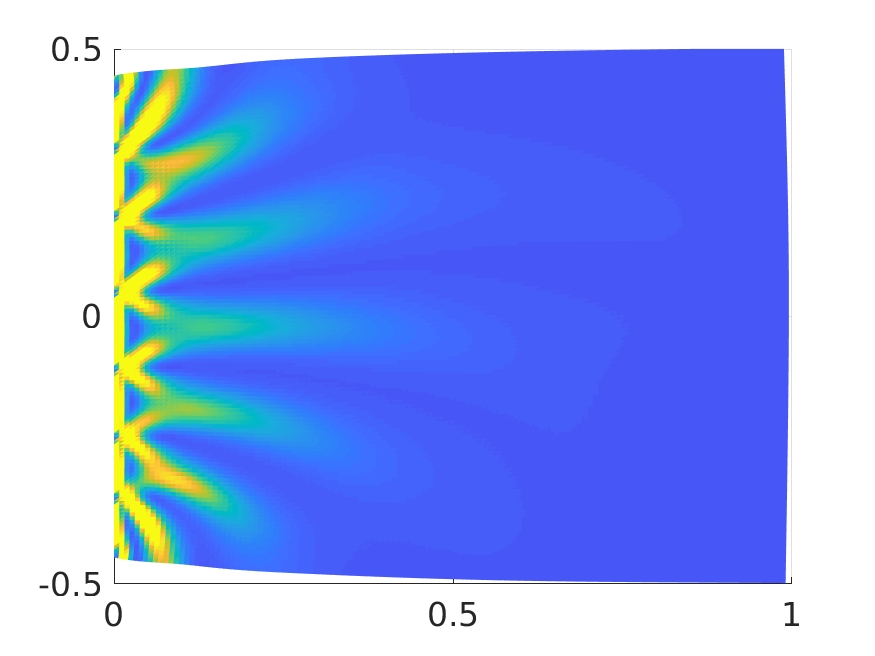} \hspace{-4mm}
\includegraphics[height=3.8cm,width=4.3cm]{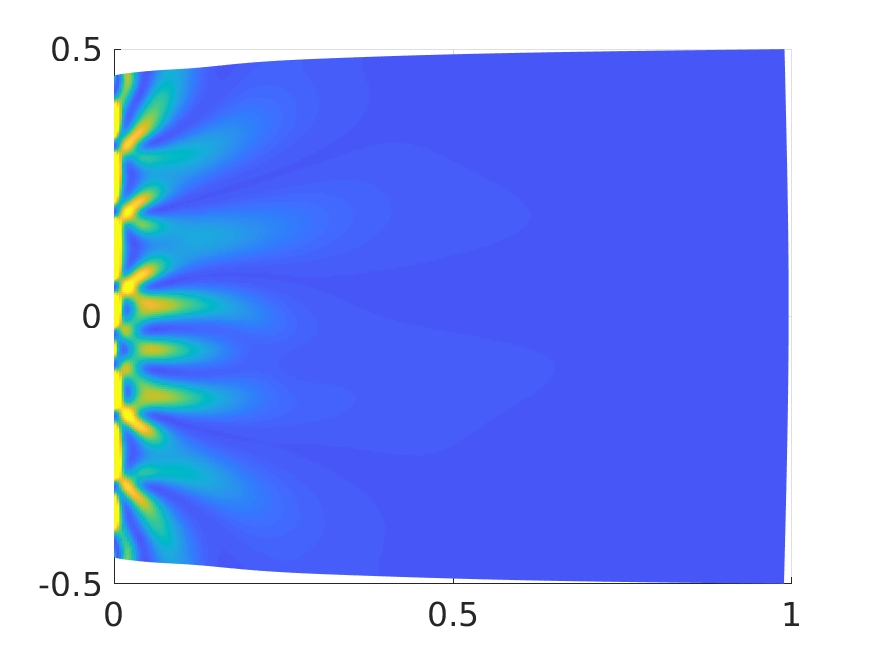} \\[.5cm]
\includegraphics[height=3.8cm,width=4.3cm]{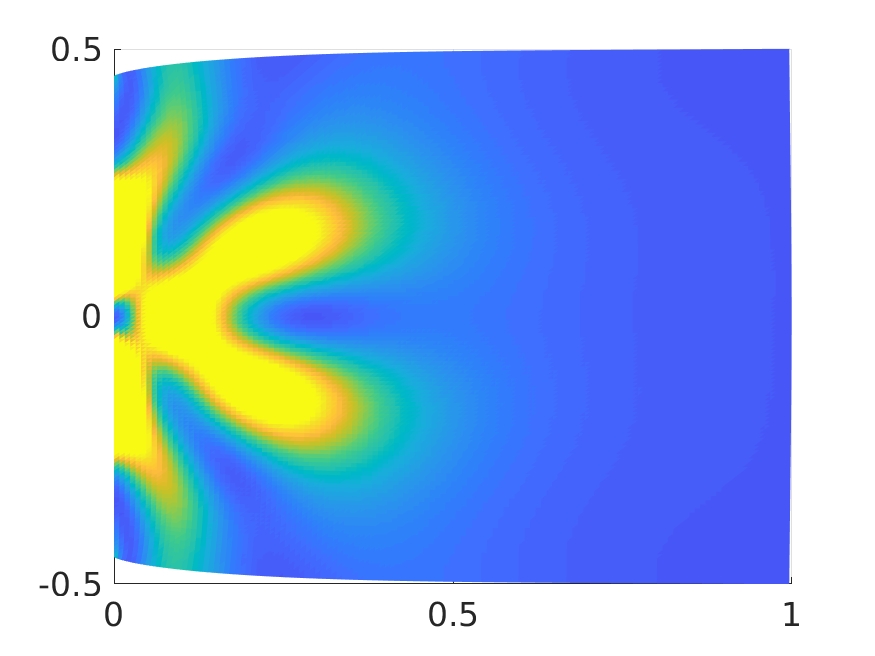} \hspace{-4mm}
\includegraphics[height=3.8cm,width=4.3cm]{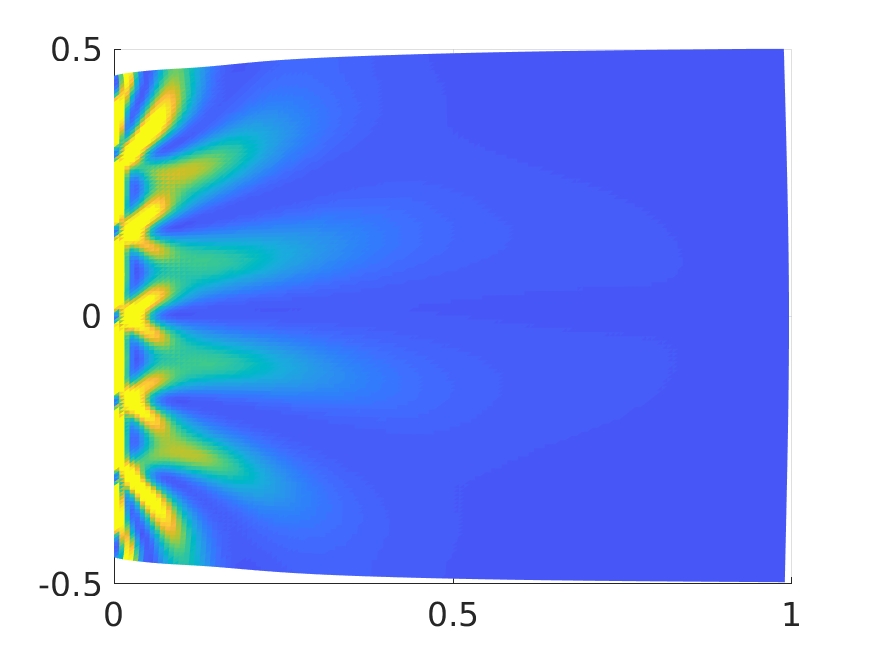} \hspace{-4mm}
\includegraphics[height=3.8cm,width=4.3cm]{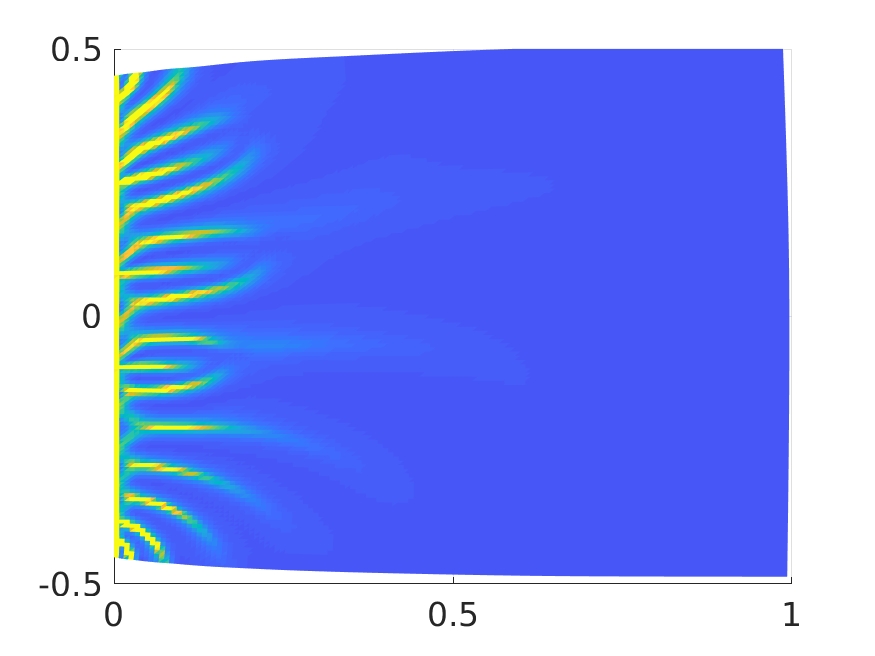} \\[.5cm]
\includegraphics[scale=.68]{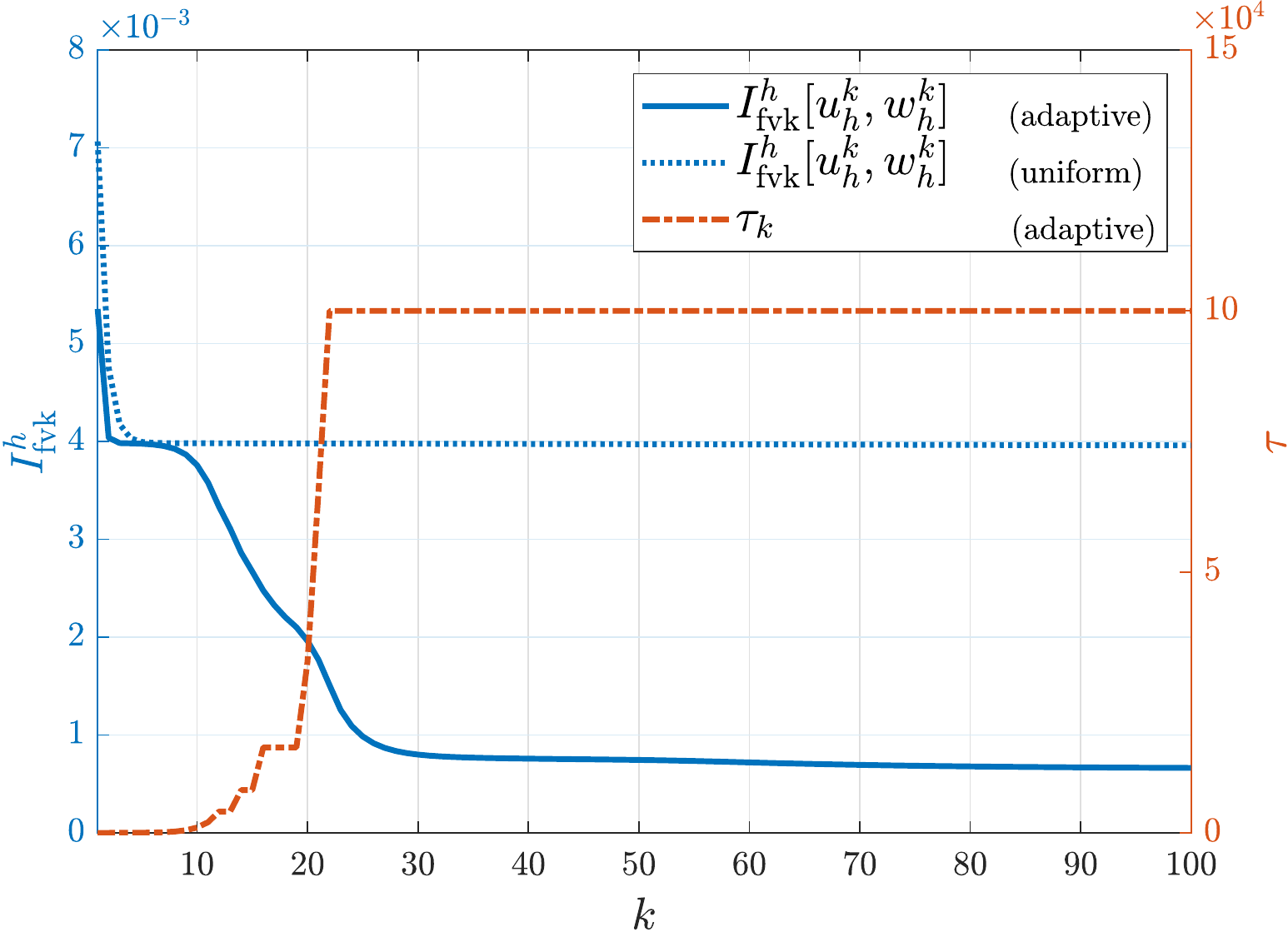} \\[.5cm]
\end{center}
\caption{\label{fig:fvk_exp}
In-plane deformation and bending energy density $|D^2 w_h|$ of the numerical 
approximations for the \fvk\ model with compressive boundary conditions
for different triangulations with mesh-sizes $h=2^{-\ell}$, $\ell=6,7,8$, and 
fixed $\d=1/200$ (top); 
for $\d =1/40$, $1/160$, and $1/640$ on a fixed triangulation with 
$h=2^{-7}$ (middle); energies and step-sizes for uniform and adaptive 
time-stepping (bottom).}
\end{figure}

\section{Conclusions}
We have addressed in this article the accurate finite element discretization
and reliable iterative solution of nonlinear models for describing large 
deformations of elastic rods and plates. To avoid unjustified regularity 
assumptions we have adopted the concept of $\Gamma$-convergence and thereby
proved convergence of approximations. To compute stationary configurations
of low and possibly minimal energy we have used gradient flow discretizations
that are guaranteed to decrease the elastic energy and preserve inextensibility
and isometry constraints appropriately. These two concepts leave a theoretical
gap in the sense that the framework of $\Gamma$-convergence is concerned
with global energy minimizers while gradient flows can only be expected to
determine stationary configurations. However, $\Gamma$-convergence goes 
beyond global energy minimization and gradient flows typically avoid unstable
critical configurations. These properties are convincingly confirmed by 
our numerical experiments which did not employ any a~priori knowledge
about expected solutions. The state of the art of reliable methods for 
computing nonlinear bending phenomena leaves open several aspects such as
the development of optimal preconditioners, adaptive refinement of stress
concentrations, determination of convergence rates, effective
combination with membrane effects, or simulation of realistic dynamic models
open. We believe that the methods presented in this article can be useful
in investigating them in future research. 

\bigskip \noindent
{\em Acknowledgments.}
The author wishes to thank his coworkers in various collaborations
that led to the results presented in this article. 

\bibliographystyle{abbrv}
\bibliography{bib_handbook}

\end{document}